\theoremstyle{plain}
\newtheorem{theorem}{Theorem}[section]
\newtheorem{lemma}[theorem]{Lemma}
\newtheorem{proposition}[theorem]{Proposition}
\newtheorem{corollary}[theorem]{Corollary}
\newtheorem{conjecture}[theorem]{Conjecture}
\theoremstyle{definition}
\newtheorem{remark}[theorem]{Remark}
\newtheorem{example}[theorem]{Example}
\title{Square-tiled surfaces and rigid curves on moduli spaces}
\author{Dawei Chen}
\address{University of Illinois at Chicago, Department of Mathematics, Statistics and Computer Science, Chicago, IL 60607}
\email{dwchen@math.uic.edu}
\begin{document}
\bibliographystyle{plain}
\maketitle

\newcommand{\Ag}{\overline{\mathcal A}_{g}}
\newcommand{\Mg}{\overline{\mathcal M}_{g}}
\newcommand{\Hg}{\overline{H}_g}
\newcommand{\Mog}{\overline{M}_{0,2g+2}}
\newcommand{\Mogs}{\widetilde{M}_{0,2g+2}}
\newcommand{\Mon}{\overline{M}_{0,n}}
\newcommand{\Mons}{\widetilde{M}_{0,n}}
\newcommand{\Eff}{\overline{\mbox{Eff}}}
\newcommand{\NE}{\overline{\mbox{NE}}_1}
\newcommand{\T}{\mathcal T_{d,\mu}}
\newcommand{\Ti}{\mathcal T_{d,\mu,i}}
\newcommand{\Mone}{\overline{\mathcal M}_{1,1}}
\newcommand{\Po}{\mathbb P^1}
\newcommand{\HH}{\mathcal H}
\newcommand{\OO}{\mathcal O}
\newcommand{\RR}{\mathbb R}
\newcommand{\QQ}{\mathbb Q}
\newcommand{\CC}{\mathbb C}
\newcommand{\ZZ}{\mathbb Z}
\newcommand{\AB}{\beta^{-1}\alpha^{-1}\beta\alpha}
\newcommand{\Mono}{\overline{M}_{0,n+1}}
\newcommand{\Monos}{\widetilde{M}_{0,n+1}}

\begin{abstract}
We study the algebro-geometric aspect of Teichm\"{u}ller curves parameterizing square-tiled surfaces with two applications: 

$(a)$ there exist infinitely many rigid curves on the moduli space of hyperelliptic curves, they span the same extremal 
ray of the cone of moving curves and their union is Zariski dense, hence they yield infinitely many rigid curves with the same properties on the moduli 
space of stable $n$-pointed rational curves for even $n$; 

$(b)$ the limit of slopes of Teichm\"{u}ller curves and the sum of Lyapunov exponents of the Hodge bundle determine each other, by which we 
can have a better understanding for the cone of effective divisors on the moduli space of curves. 
\end{abstract}

\tableofcontents

\section{Introduction}
Let $\mu = (m_1,\ldots, m_k)$ be a partition of $2g-2$ for $g \geq 2$. The moduli space $\HH(\mu)$ of Abelian differentials parameterizes pairs $(C,\omega)$, 
where $C$ is a smooth genus $g$ curve, $\omega$ is a holomorphic 1-form and $(\omega) = m_1p_1 + \cdots + m_kp_k$ for distinct points 
$p_1,\ldots, p_k$ on $C$. The space $\HH(\mu)$  is a complex orbifold of dimension $2g-1+k$ and the period map yields its local coordinates \cite{K}. It may have up to three connected components \cite{KZ}, corresponding to hyperelliptic, odd or even spin structures. 

Consider a degree $d$ connected cover $\pi: C\rightarrow T$ from a genus $g$ curve $C$ to the standard torus $T$ with a unique branch point $q$, such that 
$\pi^{-1}(q) = (m_1 + 1)p_1 + \cdots + (m_k + 1)p_k + p_{k+1} +\cdots + p_l$. Then $C$ admits a holomorphic 1-form $\omega = \pi^{-1}(dz)$ and $(\omega) = m_1p_1 + \cdots + m_kp_k$. 
It is known \cite[Lemma 3.1]{EO} that such a pair $(C, \omega)$ has integer coordinates under the period map. Varying the complex structure of $T$,  
we obtain a Teichm\"{u}ller curve $\T$ passing through $(C,\omega)$, which is invariant under the natural SL$(2,\RR)$ action on $\HH(\mu)$. One can regard $\T$ 
as the 1-dimensional Hurwitz space parameterizing degree $d$, genus $g$ connected covers of elliptic curves with a unique branch point $q$ and the ramification class $\mu$. 
Abuse our notation and still use $\T$ to denote the compactification of this Hurwitz space in the sense of admissible covers \cite[3.G]{HM2}. The boundary points of $\T$ parameterize
admissible covers of rational nodal curves. We call them cusps of $\T$. Note that $\T$ may be reducible. There is a monodromy criterion \cite[Theorem 1.18]{C} to distinguish its irreducible components, which correspond to the orbits of the SL$(2,\ZZ)$ action. Let $n_{d,\mu}$ be the number of irreducible components of $\T$ and label these components as $\Ti$, $1\leq i \leq n_{d,\mu}$. If $\Ti$ is contained in the hyperelliptic component $\HH^{hyp}(\mu)$ of $\HH(\mu)$, we denote it by $\Ti^{hyp}.$ 
  
Our motivation is to use $\T$ to study the birational geometry of moduli spaces of stable pointed rational curves and stable genus $g$ curves. 
Let $\overline{\mathcal M}_{g,n}$ denote the moduli space of stable $n$-pointed genus $g$ curves. In particular, let $\Mon$ ($\Mons$) denote the moduli space of stable (unordered) $n$-pointed rational curves. There are two natural morphisms as follows: 
$$\xymatrix{
\T \ar[r]^{h} \ar[d]^{e}  &  \Mg  \\
 \Mone             & }
$$ 
The map $h$ sends a cover to the stable limit of its domain curve. The map $e$ sends a cover to its target elliptic curve 
marked at the unique branch point. Moreover, $e$ is finite of degree $N_{d,\mu}$, where $N_{d,\mu}$ is the number of non-isomorphic 
such covers over a fixed elliptic curve. Let $N_{d,\mu,i}$ denote the degree of $e$ restricted to the component $\Ti$, $1\leq i \leq n_{d,\mu}.$

Let $\Hg\subset \Mg$ denote the closure of the locus of smooth hyperelliptic curves. One can regard $\Hg$ as 
the Hurwitz space parameterizing genus $g$ admissible double covers of rational curves. Such a cover uniquely corresponds to 
a stable $(2g+2)$-pointed rational curve, by marking the branch points of the cover. Thus $\Hg$ can be further identified as $\Mogs$. There is a finite morphism 
$$f: \Mon\rightarrow \Mons$$ 
forgetting the order of the marked points. 

Consider two special partitions $\mu = (2g-2)$ or $(g-1, g-1)$. The corresponding moduli spaces $\HH(2g-2)$ and $\HH(g-1,g-1)$ both have a hyperelliptic component $\HH^{hyp}(2g-2)$ and $\HH^{hyp}(g-1,g-1)$, respectively. Note that $h$ maps the Teichm\"{u}ller curves $\Ti^{hyp}$ to $\Hg$ for $\mu = (2g-2)$ or $(g-1, g-1)$. 

\begin{theorem}[Density]
\label{dense}
The union of $h(\Ti^{hyp})$ over all $d, i$ is Zariski dense in $\Hg\cong \Mogs$ for $\mu = (2g-2)$ or $(g-1, g-1)$. Its pre-image in $\Mog$ is also Zariski dense.
\end{theorem}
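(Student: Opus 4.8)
The plan is to identify $\bigcup_{d,i}h(\Ti^{hyp})$ with the image of \emph{all} single-branch-point square-tiled surfaces that lie in the hyperelliptic component $\HH^{hyp}(\mu)$, and then to deduce Zariski density from two facts: such surfaces are dense in $\HH^{hyp}(\mu)$, and the map forgetting the differential is dominant onto $\Hg$.

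First I would introduce the forgetful morphism $\bar h\colon \HH^{hyp}(\mu)\to\Hg$, $(C,\omega)\mapsto C$, which extends $h$, and verify that it is dominant. For $\mu=(2g-2)$ an anti-invariant form with a single zero of order $2g-2$ must vanish at a fixed point of the hyperelliptic involution $\sigma$; over each of the $2g+2$ Weierstrass points of a fixed $C\colon y^2=f(x)$ there is exactly one such form up to scale, a multiple of $(x-a)^{g-1}\,dx/y$, so the projectivized fibres of $\bar h$ are finite and $\bar h$ is generically finite (here $\dim\HH^{hyp}(2g-2)=2g$ while $\dim\Hg=2g-1$). For $\mu=(g-1,g-1)$ the two zeros form a $\sigma$-conjugate pair $p,\sigma(p)$, the forms being multiples of $(x-b)^{g-1}\,dx/y$, so the projectivized fibres are one-dimensional and the dimension count $\dim\HH^{hyp}(g-1,g-1)=2g+1$ again gives dominance.

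Next I would use that the period map of \cite{K} is a local biholomorphism onto an open subset of $\CC^{2g-1+k}$, so the points of $\HH^{hyp}(\mu)$ whose absolute and relative periods all lie in $\QQ+i\QQ$ are dense in the analytic, hence the Zariski, topology. Multiplying $\omega$ by a common denominator clears every period at once, producing a point with periods in $\ZZ+i\ZZ$; this rescaling fixes the underlying curve and stays in $\HH^{hyp}(\mu)$, and now every zero of $\omega$ maps to the origin of $T$, so the result is a single-branch-point square-tiled surface. Each such surface has a well-defined degree $d$ and lies on the component $\Ti^{hyp}$ attached to its $\mathrm{SL}(2,\ZZ)$-orbit; since $\bar h(C,n\omega)=h(C,\omega)=C$, the union $\bigcup_{d,i}h(\Ti^{hyp})$ contains $\bar h$ of a Zariski dense set. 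Because the image of a Zariski dense set under a dominant morphism is Zariski dense, this proves density in $\Hg\cong\Mogs$.

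For the pre-image in $\Mog$, let $D$ be the dense set just constructed and $Z=\overline{f^{-1}(D)}$ under the finite forgetful map $f\colon\Mog\to\Mogs$. Since $f$ is finite it is closed, so $f(Z)\supseteq\overline D=\Mogs$; as $\Mogs$ is irreducible some component $Z_0$ of $Z$ dominates it, and finiteness of $f$ forces $\dim Z_0=\dim\Mog$, whence $Z_0=\Mog$ by irreducibility of $\Mog$. Thus $f^{-1}(D)$ is Zariski dense. I expect the dominance of $\bar h$ to be the step needing the most care, namely checking that the hyperelliptic stratum of differentials genuinely surjects onto $\Hg$ and, for $\mu=(g-1,g-1)$, that imposing a single branch point (integrality of the relative period) does not spoil density, while the remaining topological steps are soft.
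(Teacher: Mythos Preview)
Your proposal is correct and follows essentially the same strategy as the paper: square-tiled surfaces are identified with the integer (equivalently, after your rescaling trick, rational) points in period coordinates on $\HH^{hyp}(\mu)$, hence are Zariski dense there; the forgetful map $\bar h$ to $\Hg$ is dominant; and finiteness of $f\colon\Mog\to\Mogs$ propagates density upward. The paper argues more tersely---it cites \cite[Lemma~3.1]{EO} for the integer-point description and simply asserts dominance of $\bar h$ and the finite pre-image step---whereas you spell out the dominance by writing down the anti-invariant forms $(x-a)^{g-1}dx/y$ explicitly and give a cleaner closed-map argument for the pre-image, but the architecture is the same.
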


For a projective variety $X$, let $\NE (X)$ denote its Mori cone of effective curves. Fulton conjectured that $\NE (\Mon)$ ($\NE (\Mons)$) is generated by vital curves. 
By a vital curve, we mean an irreducible component of the 1-dimensional locus in $\Mon$ ($\Mons$) parameterizing pointed rational curves with at least $n-3$ components. 
Fulton's conjecture has been verified for $n\leq 7$ for $\Mon$ \cite{KM} and $n\leq 24$ for $\Mons$ \cite{G}. 

For a map $f: C\rightarrow X$ from an irreducible curve $C$ to a variety $X$, we call $f$ rigid if there does not exist a non-isotrivial family $f_t$ of maps to $X$ such that $f=f_0$. 
If there is no confusion about the map, we also call $C$ rigid on $X$. Suppose the class of an effective curve $R$ generates an extremal ray of $\NE (\Mon)$. Let $f: C\rightarrow R$ be a finite morphism from a curve $C$ to $R$. Keel and McKernan \cite{KM} proved that if $R\cap M_{0,n}\neq \emptyset$, then $f$ must be rigid, cf. \cite[$\S$ 8]{CT} for a precise statement. Hence, it is natural to study rigid curves on $\Mon$ intersecting its interior, which may provide (hypothetical) counterexamples for Fulton's conjecture. Among the few known rigid curves, Castravet and Tevelev studied exceptional loci on $\Mon$ using hypergraph curves and their record is a rigid curve on $\overline{M}_{0,12}$ \cite[Theorem 7.8]{CT}. Koll\'{a}r came up with a series of potential rigid curves (unpublished), whose construction relies on rigid line configurations in $\mathbb P^2$. 

\begin{theorem}[Rigidity]
\label{rigid}
For $g\geq 2$ and $\mu = (2g-2)$ or $(g-1, g-1)$, $\Ti^{hyp}$ is rigid on $\Hg \cong \Mogs $. 
Pull back $\Ti^{hyp}$ via $f: \Mog\rightarrow \Mogs$ for all $d, i$. If $g\geq 3$, infinitely many of the pullbacks are still rigid on $\Mog$.    
\end{theorem}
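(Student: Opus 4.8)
The plan is to obstruct deformations by exploiting the fact that, through $h$, every $\Ti^{hyp}$ is a model of a Teichm\"uller curve inside $\Hg\subset\Mg$, whose Hodge-theoretic geometry is extremal and therefore very rigid. For the first statement I would invoke M\"oller's characterization of Teichm\"uller curves: over the smooth model of $\Ti^{hyp}$ the pullback of the Hodge bundle $\mathbb E$ contains a line subbundle $L$ whose degree attains the maximal Arakelov value $\frac12(2g(\Ti^{hyp})-2+s)$, where $s$ is the number of cusps, and conversely a family carrying such a maximal Higgs subbundle is a Teichm\"uller curve. Note that the cusps of $\Ti^{hyp}$ are intrinsically pinned down by the admissible-cover compactification, so this Arakelov equality is robust.

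Now suppose there were a non-isotrivial family $h_t$ with $h_0=h$. Pulling back the universal family yields a continuously varying weight-one variation of Hodge structure over the source; since the topological monodromy representation is locally constant in a connected family, $\deg L_t$ stays equal to the maximal value $\deg L_0$, so each $h_t$ again has a maximal Higgs subbundle and each image is a Teichm\"uller curve. But the Teichm\"uller curves coming from square-tiled surfaces are parameterized by the discrete combinatorial data of the tiling, hence form a countable family of algebraic curves in $\Mg$; a continuously varying family of them must be constant. Thus the family is isotrivial and $h$ is rigid on $\Mg$, a fortiori on $\Hg\cong\Mogs$.

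For the second statement I would push deformations downstairs along the finite map $f$. Let $\tilde C$ be a component of $f^{-1}(h(\Ti^{hyp}))\subset\Mog$, and suppose $\tilde f_t$ deforms its inclusion. Composing with $f$ gives a family of maps into $\Mogs\cong\Hg$, and because $f|_{\tilde C}\colon \tilde C\to h(\Ti^{hyp})$ is a finite cover of a Teichm\"uller curve, the pullback of $L$ to $\tilde C$ is maximal Higgs at $t=0$. So long as the Arakelov equality persists the argument of the first part applies and forces the image of $f\circ\tilde f_t$ to remain the fixed Teichm\"uller curve $h(\Ti^{hyp})$; then $\tilde f_t(\tilde C)$ stays in the fixed finite union $f^{-1}(h(\Ti^{hyp}))$, the image is locally constant equal to $\tilde C$, and the deformation is isotrivial.

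The hard part, and the reason only infinitely many pullbacks are claimed rigid, is that on the cover $\tilde C$ the Arakelov equality need \emph{not} persist: $\deg L_t$ is monodromy-constant, but the bound $\frac12(2g(\tilde C)-2+s_{\tilde C})$ can change if a source degeneration smooths nodes of $\tilde C$ or alters its cusps, whereupon the image may escape $f^{-1}(h(\Ti^{hyp}))$ and $\tilde f_t$ becomes genuinely non-isotrivial. I would therefore analyze $f$ near $\tilde C$: away from the branch locus of $f$ (the strata of $\Hg$ where branch points collide) $f$ is \'etale, so the extra first-order deformations upstairs come precisely from the nontrivial summands of $(f|_{\tilde C})_\ast\OO_{\tilde C}$ and from nodes created where $\tilde C$ meets the branch locus. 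The technical heart is then to show, using the Density Theorem together with the growth of the genus and the number of cusps of $\Ti^{hyp}$ in $d$, that for infinitely many $(d,i)$ the curve $\tilde C$ meets the branch locus transversally and admits no source degeneration breaking the Arakelov equality, so that $H^0(\tilde C, N_{\tilde C/\Mog})$ contributes nothing beyond reparameterization; the hypothesis $g\ge 3$ enters to ensure $\dim\Mog=2g-1\ge5$ is large enough for the relevant boundary strata to impose independent conditions. Establishing this cohomological vanishing for an explicit infinite family of $(d,i)$ is the main obstacle.
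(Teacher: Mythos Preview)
Your first part is essentially the M\"oller approach that the paper simply cites (together with McMullen). One small slip: after you show each $h_t$ is a Teichm\"uller curve via the Arakelov equality, you conclude by saying the \emph{arithmetic} Teichm\"uller curves are countable. But nothing forces $h_t$ to be arithmetic for $t\neq 0$; you need the (true) fact that \emph{all} Teichm\"uller curves in $\mathcal M_g$ form a countable set. With that correction the first part is fine and matches the paper.

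The second part is where you diverge, and where there is a genuine gap. You try to push the Arakelov-equality argument up to the cover $\tilde C\subset\Mog$ and then control when it can fail; you acknowledge this as ``the main obstacle'' and leave it as a program involving transversality with the branch locus and a cohomology vanishing you do not prove. The paper bypasses all of this with two ingredients you are missing:
\begin{itemize}
\item The McMullen/M\"oller rigidity result is stated in a strong form: any finite morphism to a Teichm\"uller curve that is \emph{unramified away from the cusps} is itself rigid. So you do not need to re-establish an Arakelov equality on $\tilde C$; you only need $f^{-1}(h(\Ti^{hyp}))\to h(\Ti^{hyp})$ to be unramified over the interior, i.e.\ $h(\Ti^{hyp})$ avoids the interior branch locus $U$ of $f$.
\item That this happens for infinitely many $(d,i)$ is a pure dimension count. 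By Keel--McKernan, $\mathrm{codim}_{H_g}(U)\ge 2$ once $2g+2\ge 7$, i.e.\ $g\ge 3$ (this is the actual role of the hypothesis $g\ge 3$, not the boundary-stratum heuristic you propose). Lifting $U$ to $U(\mu)\subset\HH^{hyp}(\mu)$, the $\mathrm{SL}(2,\RR)$-saturation of $U(\mu)$ is then a \emph{proper} closed subset of the stratum, hence misses infinitely many integer points under the period coordinates; the Teichm\"uller curves through those integer points avoid $U$ and their pullbacks are rigid.
\end{itemize}
So the missing idea is precisely: invoke the unramified-cover form of rigidity, use the codimension-two bound on the branch locus from Keel--McKernan, and finish with a dimension argument inside the stratum. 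Your deformation/Arakelov strategy on $\tilde C$ is not wrong in spirit, but it is much harder and you have not carried it out.
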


\begin{corollary}
\label{rigideven}
For even $n\geq 6$ (resp. $\geq 8$), there exist infinitely many rigid curves on $\Mons$ (resp. $\Mon$). The union of their images is Zariski dense in $\Mons$ (resp. $\Mon$). 
\end{corollary}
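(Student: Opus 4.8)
The plan is to deduce the corollary directly from the Density theorem (Theorem \ref{dense}) and the Rigidity theorem (Theorem \ref{rigid}) once the indices are matched. Setting $n = 2g+2$, the identification $\Hg \cong \Mogs$ presents $\Mons = \widetilde{M}_{0,n}$ as $\Mogs$ with $g = (n-2)/2$; an even $n \geq 6$ corresponds exactly to $g \geq 2$, and an even $n \geq 8$ to $g \geq 3$. I fix either $\mu = (2g-2)$ or $(g-1,g-1)$, so that $h$ carries the hyperelliptic components $\Ti^{hyp}$ into $\Hg$, and treat the two ranges of $n$ in parallel with these two cases.

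For the unordered space $\Mons$, I would consider the curves $h(\Ti^{hyp})$ ranging over all $d$ and $i$. Each is rigid on $\Hg \cong \Mogs = \Mons$ by the first assertion of Theorem \ref{rigid}, and by Theorem \ref{dense} their union is Zariski dense. Since $\dim \Mons = n-3 = 2g-1 \geq 3$ for $g \geq 2$, no finite union of curves can be Zariski dense, so the collection $\{h(\Ti^{hyp})\}$ must contain infinitely many distinct members. These furnish the required infinitely many rigid curves whose union is dense, and in this case infinitude of the rigid family is automatic from density together with the dimension count.

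For the ordered space $\Mon = \Mog$, I would pass to the pullbacks $f^{-1}(h(\Ti^{hyp}))$ and their irreducible components. By the second assertion of Theorem \ref{rigid}, for $g \geq 3$ infinitely many of these pullbacks are rigid on $\Mog$, which already supplies the infinitely many rigid curves. For the density claim, Theorem \ref{dense} shows that the preimage of $\bigcup_{d,i} h(\Ti^{hyp})$ under $f$, namely the union of all the pullbacks, is Zariski dense in $\Mog$.

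The one point requiring care is to upgrade this to density of the union of the \emph{rigid} pullbacks alone, rather than of all pullbacks, since a priori the non-rigid ones could be the source of the density; this I expect to be the main obstacle. I would resolve it by observing that the density argument of Theorem \ref{dense} can be carried out using covers of arbitrarily large degree $d$, while the rigid subfamily produced in Theorem \ref{rigid} likewise persists for infinitely many (and plausibly all sufficiently large) $d$, so that intersecting the two ranges of degrees yields a subfamily that is simultaneously rigid and Zariski dense. Even without such a refinement one still obtains the weaker reading of the statement, namely infinitely many rigid curves together with a Zariski dense union of the associated pullback curves, which is already enough for the intended applications to Fulton's conjecture.
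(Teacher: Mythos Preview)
Your approach is essentially the paper's: the author simply writes that Corollary~\ref{rigideven} ``follows immediately as a consequence of Theorems~\ref{dense} and~\ref{rigid},'' and your argument unpacks exactly that, with the index matching $n=2g+2$ and the observation that density forces infinitude.

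The one point you flag as ``the main obstacle'' --- whether the union of the \emph{rigid} pullbacks (rather than all pullbacks) is still Zariski dense in $\Mog$ --- is indeed glossed over in the paper's one-line proof, but it is resolved by the proof of Theorem~\ref{rigid} itself rather than by the degree argument you sketch. In that proof, a pullback $f^{-1}(h(\Ti^{hyp}))$ can fail to be rigid only if $h(\Ti^{hyp})$ meets the branch locus $U$ of $f$ in the interior, equivalently only if the corresponding integer point lies in the $\mathrm{SL}(2,\mathbb{R})$ orbit of $U(\mu)$, which is shown there to be a \emph{proper} subspace of $\HH^{hyp}(\mu)$. The integer points lying outside a proper closed subset are still Zariski dense, so the rigid Teichm\"uller curves already form a dense family in $\HH^{hyp}(\mu)$, and hence their images (and preimages under $f$) are dense in $\Hg$ and $\Mog$. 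This is a cleaner justification than appealing to ``all sufficiently large $d$,'' which is not what the proof of Theorem~\ref{rigid} actually establishes.
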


Let $\phi: \Mono\rightarrow\Mon$ be the morphism forgetting a marked point. For a rigid curve $R$ on $\Mon$, 
$\phi^{-1}(R)$ is the universal curve over $R$, so it is a ruled surface with $n$ sections. These sections are rigid curves 
on $\Mono$. 

\begin{corollary}
\label{rigidodd}
For even $n\geq 8$, there exist infinitely many rigid curves on $\Mono$. The union of their images is Zariski dense in the boundary components $\bigcup_{ |I| = 2}D_I$.  
\end{corollary}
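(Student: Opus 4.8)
The plan is to realize the required rigid curves as the tautological sections of the universal curve over the rigid curves on $\Mon$ furnished by Corollary~\ref{rigideven}, and then to exploit the $S_{n+1}$-symmetry of $\Mono$ to spread these sections across every boundary divisor $D_I$ with $|I|=2$. Throughout I take for granted the two facts already established in the excerpt: that for even $n\geq 8$ there are infinitely many rigid curves on $\Mon$ with Zariski dense union, and that the $n$ tautological sections of $\phi^{-1}(R)$ over a rigid curve $R$ are themselves rigid on $\Mono$.

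First I would fix a rigid curve $R$ on $\Mon$ as provided by Corollary~\ref{rigideven}. Then $\phi^{-1}(R)$ is the universal curve over $R$, a ruled surface carrying the $n$ tautological sections $\sigma_1(R),\ldots,\sigma_n(R)$, each rigid on $\Mono$ by the discussion preceding the statement. As $R$ ranges over the infinitely many rigid curves supplied above and $\sigma_i$ is injective, the curves $\sigma_i(R)$ are pairwise distinct, so this construction already yields infinitely many rigid curves on $\Mono$ and settles the first assertion.

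Next I would identify the images and establish density. The section $\sigma_i$ attaches the new marked point $n+1$ to $p_i$, hence is an isomorphism of $\Mon$ onto the boundary divisor $D_{\{i,n+1\}}\cong\Mon$ via the standard clutching identification. Since $\bigcup_R R$ is Zariski dense in $\Mon$ and $\sigma_i$ is an isomorphism onto its image, $\bigcup_R\sigma_i(R)$ is Zariski dense in $D_{\{i,n+1\}}$ for each fixed $i$. To reach the remaining divisors, given any two-element subset $I=\{a,b\}\subset\{1,\ldots,n+1\}$ I would choose $\tau\in S_{n+1}$ with $\tau(\{i,n+1\})=I$; the induced automorphism of $\Mono$ preserves rigidity and carries $D_{\{i,n+1\}}$ isomorphically onto $D_I$, so $\{\tau\circ\sigma_i(R)\}_R$ is again a family of rigid curves with union Zariski dense in $D_I$. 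Running over all such $I$ shows the union of all these rigid curves is Zariski dense in $\bigcup_{|I|=2}D_I$, as required.

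The genuine mathematical content sits in the granted input, namely the rigidity of the sections $\sigma_i(R)$: were one to reprove it, the delicate point would be ruling out deformations of $\sigma_i(R)$ that leave $D_{\{i,n+1\}}$, which I would attack by pushing a putative family forward along $\phi$ and invoking the rigidity of $R$ to confine it to $\phi^{-1}(R)$, followed by a normal-bundle computation for the section inside that ruled surface. Granting this, the only step in the corollary itself demanding care is the symmetry bookkeeping: one must verify that every $D_I$ with $|I|=2$—not merely the $n$ divisors $D_{\{i,n+1\}}$ meeting the forgotten point—is densely covered, which is exactly what the transitive action of $S_{n+1}$ on two-element subsets guarantees.
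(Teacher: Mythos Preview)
Your proposal is correct and follows essentially the same route as the paper: take the rigid curves $R$ on $\Mon$, pass to the tautological sections of the universal curve $\phi^{-1}(R)$ (rigid by Lemma~\ref{ruled}), and then use the symmetry of $\Mono$ to hit every boundary divisor $D_I$ with $|I|=2$. The only cosmetic difference is that the paper phrases the last step as varying the forgetful map $\phi$ over all bijections $\{1,\ldots,n\}\to\{1,\ldots,\hat{i},\ldots,n+1\}$, whereas you fix one $\phi$ and then act by $S_{n+1}$; these are equivalent.
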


The rational Picard group of $\Hg$ is generated by the boundary components $\Xi_0,\ldots, \Xi_{[(g-1)/2]}$ and $\Theta_1, \ldots, \Theta_{[g/2]}$ \cite[6.C]{HM2}, 
where a general point of $\Xi_i$ parameterizes a double cover of $\Po \cup \Po$ branched at $2i+2$ points in one component and $2g-2i$ in the other, 
and a general point of $\Theta_i$ parameterizes a double cover of $\Po\cup \Po$ branched at $2i+1$ points in one component and $2g-2i+1$ in the other. 
Let $D_k$ be the boundary component of $\Mogs$ whose general point parameterizes a $k$-pointed $\Po$ union a $(2g+2-k)$-pointed $\Po$, $2\leq k\leq g+1$. 
The natural isomorphism $\Hg\cong \Mogs$ induces the identification $\Xi_i = D_{2i+2}$ and $\Theta_i = D_{2i+1}$. 

\begin{theorem}[Extremity]
\label{extremal}
For $\mu = (2g-2)$ or $(g-1, g-1)$, the image of $\Ti^{hyp}$ in $\Hg$ does not intersect the boundary component $D_k$ for any $k > 2$. The numerical class of $\Ti^{hyp}$ spans 
an extremal ray of the cone of moving curves on $\Hg\cong \Mogs$, which is dual to the face $\langle D_3, \ldots, D_{g+1}\rangle $ of the effective cone. 
\end{theorem}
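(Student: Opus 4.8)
The plan is to reduce the entire statement to one intersection computation on the boundary and then feed it into the duality between moving curves and effective divisors. Recall from the stated description of $\mathrm{Pic}(\Hg)_\QQ$ that the $g$ boundary divisors $D_2,\dots,D_{g+1}$ form a basis of $N^1(\Hg)_\QQ$; hence the class of any curve is determined by its $g$ intersection numbers against them, and $N_1(\Hg)_\QQ$ carries the dual basis $D_2^\vee,\dots,D_{g+1}^\vee$. Everything will follow once I show that $\Ti^{hyp}\cdot D_k = 0$ for all $k\geq 3$ and $\Ti^{hyp}\cdot D_2 > 0$: the first (set-theoretic) vanishing is exactly the assertion that the image avoids $D_k$ for $k>2$, and together they give $[\Ti^{hyp}] = c\,D_2^\vee$ with $c>0$, so in particular every component, for every $d$ and $i$, spans the same ray $\RR_{\geq 0}D_2^\vee$.

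The crux is the degeneration analysis at the cusps, which I would carry out on the stable differential. Since $\Ti^{hyp}$ is complete and $\Mogs\setminus\partial \cong M_{0,2g+2}/S_{2g+2}$ is affine, the curve must meet the boundary, so $\Ti^{hyp}\cdot D_2>0$ once the vanishing below is in place. At a cusp the torus $T$ degenerates to a nodal cubic $T_0$ and the square-tiled cover specializes to an admissible cover $\pi_0\colon C_0\to T_0$; because the differential along the Teichm\"uller curve is $\omega=\pi^*(dz)$, its limit is $\omega_0=\pi_0^*(dw/w)$, where $dw/w$ is the regular differential on $T_0$ with simple poles of residue $\pm1$ at the node. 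As admissible covers send nodes to nodes and $T_0$ has a single node, every node of $C_0$ lies over it, where $\omega_0$ acquires simple poles with nonzero residues; since a separating node forces residue zero, I conclude that \emph{all nodes of $C_0$ are non-separating}, so $h(\Ti^{hyp})$ avoids every $\Theta_i=D_{2i+1}$. To exclude the remaining even divisors $\Xi_i=D_{2i+2}$ with $i\geq1$, I would count zeros: on any component of $C_0$ carrying no zero of $\omega_0$, the restricted differential has only poles, forcing that component to be a $\Po$ meeting the rest in exactly two non-separating nodes; the hyperelliptic symmetry together with stability of the limit then identifies its contribution with a single $D_2=\Xi_0$ node carrying exactly two of the $2g+2$ branch points. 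For $\mu=(2g-2)$ the single zero of order $2g-2$ lies on one component, so every other component is such a rational bridge and the limit lies in $\overline{D_2}$ and in no $D_k$ with $k\geq3$; the case $\mu=(g-1,g-1)$ runs in parallel once one checks, by the same genus--residue bookkeeping, that the two conjugate zeros cannot separate onto distinct components. This differential and genus analysis is where I expect the real work to be, since it must be made uniform over all cusps and both partitions.

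With the intersection numbers established, the class $[\Ti^{hyp}] = c\,D_2^\vee$ is a \emph{moving} class. By the Density theorem \ref{dense} the union of the $h(\Ti^{hyp})$ is Zariski dense; if some effective divisor $E$ satisfied $[\Ti^{hyp}]\cdot E<0$, then every $h(\Ti^{hyp})$ would be contained in $E$, contradicting density. Hence $[\Ti^{hyp}]$ pairs non-negatively with $\Eff(\Hg)$, i.e.\ it lies in $\overline{\mathrm{Mov}}_1(\Hg)=\Eff(\Hg)^\vee$ (Boucksom--Demailly--Paun--Peternell). Extremality is now formal: every moving class pairs non-negatively with the effective $D_k$, so if $[\Ti^{hyp}]=\gamma_1+\gamma_2$ with $\gamma_1,\gamma_2$ moving, then for each $k\geq3$ the non-negative numbers $\gamma_1\cdot D_k$ and $\gamma_2\cdot D_k$ sum to $[\Ti^{hyp}]\cdot D_k=0$, whence both vanish and, $D_2,\dots,D_{g+1}$ being a basis, $\gamma_1,\gamma_2\in\RR_{\geq0}D_2^\vee$. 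Thus $\RR_{\geq0}D_2^\vee$ is an extremal ray. Finally, the functional $[\Ti^{hyp}]$ is non-negative on $\Eff(\Hg)$, vanishes precisely along $D_3,\dots,D_{g+1}$ and is positive on $D_2$; hence its perpendicular hyperplane supports the effective cone along the face $\langle D_3,\dots,D_{g+1}\rangle$, exhibiting the extremal ray as the dual face of $\langle D_3,\dots,D_{g+1}\rangle$, as claimed.
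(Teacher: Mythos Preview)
Your overall architecture is sound, and for $\mu=(2g-2)$ your stable-differential argument is a clean alternative to the paper's. The paper does not use residues: it proves a separate proposition (Proposition~\ref{stable limit}) by a direct Riemann--Hurwitz count on the admissible cover $C_0\to E_0$, showing that every component not containing a ramification point is a smooth $\Po$ meeting the rest in exactly two points, hence the stable limit is irreducible nodal; it then carefully analyzes the admissible double cover $C_0\to B$ to pin down that every tail of $B$ carries exactly two marked points. Your observation that nonzero residues force all nodes to be non-separating, and that a zero-free component $Y$ satisfies $2g(Y)-2=-\#\{\text{poles}\}$ and is therefore an unstable rational bridge, reaches the same irreducibility more conceptually. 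For the extremality, the paper simply quotes Keel--McKernan that $\Eff(\Mogs)=\langle D_2,\dots,D_{g+1}\rangle$ and reads off the duality; your route via the density theorem to place $[\Ti^{hyp}]$ in $\overline{\mathrm{Mov}}_1$ is a legitimate substitute for half of that, and the formal extremality step is fine.

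There is, however, a genuine gap in the $\mu=(g-1,g-1)$ case. ``Genus--residue bookkeeping'' alone does \emph{not} force the two zeros onto the same component: the degree constraints are perfectly consistent with a two-component stable limit, and the paper exhibits (in the Remark following Proposition~\ref{stable limit}) an explicit degree~$6$, genus~$3$ cover in the non-hyperelliptic part of $\mathcal T_{6,(2,2)}$ whose stable limit is two elliptic curves meeting at two points, one zero on each. What rules this out in $\HH^{hyp}(g-1,g-1)$ is the hyperelliptic involution itself: since $\iota$ swaps $p_1,p_2$ but fixes the rational target of the double cover componentwise, the two zeros cannot lie on distinct components. You must invoke $\iota$ explicitly here, not the differential. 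A smaller point: your last sentence claims the supporting hyperplane cuts out \emph{exactly} $\langle D_3,\dots,D_{g+1}\rangle$, but you have only shown the facet contains these $g-1$ independent classes; identifying the facet with the simplicial cone they generate still requires the Keel--McKernan description of $\Eff(\Mogs)$, which you should cite.
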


Hassett \cite{H} studied moduli spaces of weighted pointed stable curves. The moduli space $\Mon$ can be regarded as parameterizing pointed stable rational curves with weight 1 on each marked point. Let $\mathcal A(i) = \{ \frac{1}{i}, \ldots, \frac{1}{i}  \}$ be the symmetric weight that assigns $\frac{1}{i}$ to each marked point, $2 \leq i \leq [\frac{n-1}{2}].$ The morphism 
$\rho_i: \Mon \rightarrow \overline{M}_{0,\mathcal A(i)}$ contracts all boundary divisors $D_{I}$ satisfying $ 2< |I| \leq i$. For a rigid Teichm\"{u}ller curve on $\Mon$ as above, its image 
under $\rho_i$ remains rigid on $\overline{M}_{0,\mathcal A(i)}$. 

\begin{corollary}
\label{descend}
The infinitely many Teichm\"{u}ller curves on $\Mon$ descend to infinitely many rigid curves on $\overline{M}_{0,\mathcal A(i)}$ for each even $n\geq 8$ and $2 \leq i \leq [\frac{n-1}{2}]$. 
The union of their images is Zariski dense in $\overline{M}_{0,\mathcal A(i)}$.
\end{corollary}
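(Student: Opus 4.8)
The plan is to deduce the corollary from Corollary \ref{rigideven} and Theorem \ref{extremal} by showing that the reduction morphism $\rho_i$ is a local isomorphism along each of the rigid Teichm\"{u}ller curves, so that rigidity, distinctness, and Zariski density all transfer from $\Mon$ to $\overline{M}_{0,\mathcal A(i)}$. Throughout $n = 2g+2$ is even with $g \ge 3$.

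First I would locate the rigid curves relative to the contracted locus of $\rho_i$. By Corollary \ref{rigideven} the rigid curves on $\Mon$ are the components $R$ of the pullbacks $f^{-1}(\Ti^{hyp})$ under $f: \Mon \to \Mons$. By Theorem \ref{extremal}, $\Ti^{hyp}$ meets the boundary of $\Mons$ only along $D_2$, hence is disjoint from $D_k$ for every $k > 2$. Since $f$ sends each ordered boundary divisor $D_I$ to $D_{|I|}$, we have $f^{-1}(D_k) = \bigcup_{|I|=k} D_I$; therefore every component $R$ is disjoint from $\bigcup_{|I| > 2} D_I$, and in particular from the exceptional locus $E = \bigcup_{2 < |I| \le i} D_I$ contracted by $\rho_i$.

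Next I would use that $\rho_i$ is an isomorphism away from $E$. By Hassett's theory the reduction morphism contracts exactly the divisors $D_I$ with $2 < |I| \le i$ and is bijective, hence (the target being normal) an isomorphism, over the complement of $\rho_i(E)$, with inverse image precisely $\Mon \setminus E$. Since each $R$ lies in $\Mon \setminus E$, the restriction $\rho_i|_R$ is an isomorphism onto its image and $\rho_i(R)$ is disjoint from $\rho_i(E)$. To transfer rigidity, suppose $\rho_i(R)$ admitted a non-isotrivial family of maps $g_t: C \to \overline{M}_{0,\mathcal A(i)}$ with $g_0$ parameterizing $\rho_i(R)$. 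The condition that $g_t(C)$ avoid the closed set $\rho_i(E)$ is open and holds at $t=0$, hence holds over a dense open subset $T'$ of the irreducible base; over $T'$ the maps $\rho_i^{-1} \circ g_t$ are defined and give a family of maps to $\Mon$ whose member at $0$ is $R$, still non-isotrivial because the moduli map stays non-constant on the dense open $T'$. This contradicts the rigidity of $R$ on $\Mon$, so $\rho_i(R)$ is rigid.

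Finally I would assemble the counting and density statements. Because $\rho_i$ is an isomorphism on $\Mon \setminus E$ and all the curves lie there, distinct curves have distinct images, so the infinitely many distinct rigid curves of Corollary \ref{rigideven} descend to infinitely many distinct rigid curves on $\overline{M}_{0,\mathcal A(i)}$. For density, let $Z$ be the union of these curves, which is Zariski dense in $\Mon$; since $\rho_i$ is a surjective morphism of projective varieties, $\overline{M}_{0,\mathcal A(i)} = \rho_i(\Mon) = \rho_i(\overline{Z}) \subseteq \overline{\rho_i(Z)}$, so the union of the images is Zariski dense. The one point requiring genuine care is the rigidity-lifting step: one must ensure the hypothetical deformation can be pushed off the exceptional locus and that non-isotriviality survives restriction to $T'$, which is exactly where the disjointness from $D_k$ ($k>2$) supplied by Theorem \ref{extremal} is indispensable.
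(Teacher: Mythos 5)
Your proposal is correct and follows essentially the same route as the paper: use Theorem~\ref{extremal} to show the curves avoid the locus $\bigcup_{2<|J|\leq i}D_J$ contracted by $\rho_i$, conclude that $\rho_i$ is an isomorphism near each curve so rigidity descends, and obtain density from Theorem~\ref{dense} together with surjectivity of $\rho_i$. Your treatment merely spells out in more detail the deformation-theoretic step (pulling a hypothetical family back through $\rho_i^{-1}$ over a dense open subset of the base) that the paper compresses into the phrase ``the restriction of $\rho_i$ to a local neighborhood of $f^{-1}(h(\Ti^{hyp}))$ is an isomorphism.''
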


For a curve $C$ mapped to $\Mg$ by $h$, define its slope 
$$s(C) = \frac{\mbox{deg}\  h^{*}\delta}{\mbox{deg}\  h^{*}\lambda},$$ 
where $\delta$ is the total boundary class of $\Mg$ and $\lambda$ is the first Chern class of the Hodge bundle. 

\begin{corollary}
\label{slope} 
For $\mu = (2g-2)$ or $(g-1, g-1)$, the slope of $\Ti^{hyp}$ is equal to $8 + \frac{4}{g}$ for all $d, i$. 
\end{corollary}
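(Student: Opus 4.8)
The plan is to play two facts already in hand against each other: the classical Cornalba--Harris relation expressing the Hodge class on the hyperelliptic locus in terms of boundary, and Theorem~\ref{extremal}, which pins down exactly which boundary divisors $\Ti^{hyp}$ can meet. On $\Hg$ there is a relation of the shape
$$(8g+4)\lambda = g\,\Xi_0 + \sum_{i\geq 1} a_i \Xi_i + \sum_{j\geq 1} b_j \Theta_j,$$
where the coefficient of $\Xi_0$ is $g$; I would fix this normalization by testing against Mumford's genus-two relation $10\lambda = \delta_0 + 2\delta_1$, in which $8g+4 = 20$, $\Xi_0 = \delta_0$ and $\Theta_1 = \delta_1$. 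Since $\lambda$ here is the restriction of the Hodge class of $\Mg$, this is precisely the class pulled back by $h$.

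First I would intersect this relation with $\Ti^{hyp}$. By Theorem~\ref{extremal} the image of $\Ti^{hyp}$ meets no $D_k$ with $k>2$; under the identification $\Xi_i = D_{2i+2}$ and $\Theta_j = D_{2j+1}$ this forces $\deg(\Ti^{hyp}\cdot\Xi_i) = \deg(\Ti^{hyp}\cdot\Theta_j) = 0$ for all $i,j\geq 1$. Hence every higher term drops out and
$$\deg h^{*}\lambda = \frac{g}{8g+4}\,\deg(\Ti^{hyp}\cdot\Xi_0).$$

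Next I would compute $\deg h^{*}\delta$. A generic point of $\Xi_0$ is a double cover of $\Po\cup\Po$ branched at $2$ points on one component and $2g$ on the other; the cover of the two-branch-point component is a rational bridge meeting the genus $g-1$ cover of the other component in two points, and contracting this unstable bridge produces an irreducible genus $g$ curve with a single non-separating node. Thus $h$ carries $\Xi_0$ into $\delta_0$, and the local model $y^2 = x^2 - t$, which becomes $uv = -t$ after setting $u = y-x$, $v = y+x$, shows that the nodal smoothing parameter is linear in the coordinate $t$ transverse to $\Xi_0$; so $h^{*}\delta_0 = \Xi_0$ with multiplicity one near a generic point of $\Xi_0$. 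Because $\Ti^{hyp}$ avoids every $\Xi_i$ and $\Theta_j$ with $i,j\geq 1$ (whose images account for the remaining boundary strata of $\Mg$), it meets no boundary divisor of $\Mg$ other than $\delta_0$, and therefore $\deg h^{*}\delta = \deg(\Ti^{hyp}\cdot\Xi_0)$.

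Finally I would form the ratio: the common factor $\deg(\Ti^{hyp}\cdot\Xi_0)$ cancels, leaving
$$s(\Ti^{hyp}) = \frac{\deg h^{*}\delta}{\deg h^{*}\lambda} = \frac{8g+4}{g} = 8 + \frac{4}{g},$$
with no dependence on $d$ or $i$. The quotient is legitimate because the factor is nonzero: the compactified Teichm\"uller curve is complete and is not contained in the interior, so it carries cusps, and by Theorem~\ref{extremal} each cusp maps into $\Xi_0$, whence $\deg(\Ti^{hyp}\cdot\Xi_0) > 0$. The main obstacle I anticipate is calibrating the two inputs with the correct constants: securing the coefficient $g$ on $\Xi_0$ in the Cornalba--Harris relation while confirming that the remaining boundary coefficients are genuinely supported away from $\Ti^{hyp}$, and verifying the multiplicity-one transversality of the pullback of $\delta_0$ along $\Xi_0$. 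Both are local computations on the Hurwitz space, and they are exactly where the precise value $8 + \frac{4}{g}$ is determined.
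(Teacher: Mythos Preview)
Your approach is essentially the same as the paper's. Both arguments consist of exactly two ingredients: the Cornalba--Harris type relation expressing $\iota^*\lambda$ on $\overline{H}_g$ as a combination of the boundary classes $\Xi_i,\Theta_j$, and Theorem~\ref{extremal}, which kills every term except the $\Xi_0$-contribution. The paper simply quotes the pullback formulas $\iota^*(\Delta_0)=2\sum D_{2i}$ and $\iota^*\lambda=\sum\frac{(i+1)(g-i)}{4g+2}D_{2i+2}+\sum\frac{i(g-i)}{4g+2}D_{2i+1}$ from \cite[6.C]{HM2} and takes the ratio in one line; you instead pin down the relevant coefficient by calibrating against Mumford's relation $10\lambda=\delta_0+2\delta_1$ at $g=2$ and by the local node computation $y^2=x^2-t\Rightarrow uv=-t$.

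One remark on the factor of $2$ you flag as the ``main obstacle'': the paper's convention (inherited from Harris--Morrison) has $\iota^*\Delta_0=2\Xi_0$ along $\Xi_0$ and correspondingly $(4g+2)\iota^*\lambda=g\Xi_0+\cdots$, whereas your local model and your Mumford calibration (identifying $\Xi_0$ with $\delta_0$ at $g=2$) yield multiplicity $1$ and $(8g+4)\lambda=g\Xi_0+\cdots$. These differ by the same factor of $2$ in numerator and denominator --- it is the usual stack-versus-coarse normalization of the boundary on $\Mogs$, since a generic point of $D_2$ carries the extra $\mathbb{Z}/2$ swapping the two colliding branch points. Your instinct that this calibration is the delicate point is exactly right, and your two inputs are mutually consistent, so the ratio comes out to $8+\tfrac{4}{g}$ either way.
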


The slope $s(\T)$ is determined by the quotient of two summations $M_{d,\mu}$ and $N_{d,\mu}$ \cite[Theorem 1.15]{C}. 
Understanding the asymptotic behavior of $s(\T)$ is crucial in a number of applications, e.g. it can provide an estimate for the cone of effective divisors on $\Mg$. 
Since $\HH(\mu)$ may have up to three connected components due to hyperelliptic, odd or even spin structures, we use $\T^{hyp}$, $\T^{odd}$ or $\T^{even}$ to denote 
the parts of $\T$ contained in each component, respectively.  

For a connected stratum $\HH(\mu)$, define the limit (if exists) of slopes of $\T$ as
$$s_{\mu} = \lim\limits_{d\to\infty}s(\T).$$ 
Let $\lambda_1 > \cdots > \lambda_g > 0$ denote the positive Lyapunov exponents associated to the Hodge bundle with respect to the Teichm\"{u}ller geodesic flow diag$(e^t, e^{-t})$ on $\HH(\mu)$. Roughly speaking, these numbers measure 
the growth rate of the length of a vector in the bundle under parallel transport along the 
flow, cf. \cite{K} for an introduction on Lyapunov exponents. Let $L_{\mu}$ be the sum 
$$L_{\mu} = \lambda_1 + \cdots + \lambda_g.$$ 
Use $c_{\mu}$ to denote the Siegel-Veech area constant of $\HH(\mu)$, which satisfies the relation 
$$c_{\mu} = \frac{\pi^2}{3}c_{area}$$ 
in the context of \cite{EKZ}. 
Further define $$\kappa_{\mu} = \frac{1}{12}\Big(2g-2+\sum_{i=1}^{k}\frac{m_i}{m_i+1}\Big), $$ 
which is determined by $\mu = (m_1,\ldots, m_k)$. If $\HH(\mu)$ has more than one components, the above quantities $s, L, c$ can be defined in the same way for each component. We distinguish them by adding subscripts $hyp$, $odd$ or $even$, respectively. 
One of the main results \cite{EKZ} is the relation 
$$L_{\mu} = \kappa_{\mu} + c_{\mu}. $$
The key observation is that the quotient $M_{d,\mu}/N_{d,\mu}$ approaches the Siegel-Veech constant $c_{\mu}$ for large $d$, hence we obtain a relation among the limit of slopes, 
the sum of Lyapunov exponents and the Siegel-Veech constant as follows. 

\begin{theorem}[Slope] 
\label{slc}
For a connected stratum $\HH(\mu)$, we have  
$$s_{\mu} = \frac{12 c_{\mu}}{L_{\mu}} = 12 - \frac{12\kappa_{\mu}}{L_{\mu}}. $$
If $\HH(\mu)$ has more than one connected components, the same formula holds for each component. 
\end{theorem}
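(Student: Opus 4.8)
The plan is to pass to the limit $d\to\infty$ in the explicit slope formula for $\T$ and then feed in the Eskin--Kontsevich--Zorich relation $L_{\mu} = \kappa_{\mu} + c_{\mu}$. First I would extract from \cite[Theorem 1.15]{C} the precise dependence of $s(\T)$ on the two enumerative sums. Tracking the two degree computations one should find
$$\deg h^{*}\delta = M_{d,\mu}, \qquad \deg h^{*}\lambda = \tfrac{1}{12}\big(\kappa_{\mu} N_{d,\mu} + M_{d,\mu}\big),$$
the second being the expected $\lambda = \frac{1}{12}(\delta + \text{tautological})$--type decomposition, with $\kappa_{\mu} N_{d,\mu}$ playing the role of the tautological contribution. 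Forming the ratio, the slope becomes a function of the single quantity $t_d := M_{d,\mu}/N_{d,\mu}$:
$$s(\T) = \frac{12\, M_{d,\mu}}{\kappa_{\mu} N_{d,\mu} + M_{d,\mu}} = \frac{12\, t_d}{\kappa_{\mu} + t_d}.$$
Since $\kappa_{\mu}$ depends only on $\mu$ and the map $t\mapsto 12t/(\kappa_{\mu}+t)$ is continuous on $[0,\infty)$, the entire problem reduces to identifying $\lim_{d\to\infty} t_d$.

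The heart of the matter, and the step I expect to be the main obstacle, is the asymptotic
$$\lim_{d\to\infty}\frac{M_{d,\mu}}{N_{d,\mu}} = c_{\mu}.$$
Here $N_{d,\mu}$ counts the degree $d$ square-tiled surfaces in the stratum, while $M_{d,\mu}$ is the same count weighted by the cusps of $\T$ with their multiplicities. I would interpret the cusps as the maximal horizontal cylinder decompositions of the underlying flat surfaces, and recognize $M_{d,\mu}/N_{d,\mu}$ as the average, over all such surfaces, of the area-weighted cylinder count. One then has to show that both sums have controlled growth in $d$ and that their ratio converges to the Siegel--Veech area constant: by the quasimodular asymptotic enumeration of branched covers in the style of \cite{EO}, $N_{d,\mu}$ grows like the Masur--Veech volume and $M_{d,\mu}$ like that volume times $c_{area}$, the normalization $c_{\mu} = \frac{\pi^2}{3}c_{area}$ being exactly the factor (produced by the $\zeta(2)$ that appears in the relevant lattice-point counts) that makes the quotient converge to $c_{\mu}$ in the sense of \cite{EKZ}. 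Establishing these two asymptotics rigorously, and checking that the ramification bookkeeping defining $M_{d,\mu}$ matches the geometric cylinder count, is where the real work lies.

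Granting the two steps above, the conclusion is immediate. Substituting $t_d\to c_{\mu}$ into the continuous expression for the slope gives $s_{\mu} = 12\,c_{\mu}/(\kappa_{\mu}+c_{\mu})$. I would then invoke the Eskin--Kontsevich--Zorich relation $L_{\mu} = \kappa_{\mu} + c_{\mu}$ to replace the denominator, obtaining $s_{\mu} = 12\,c_{\mu}/L_{\mu}$, and finally substitute $c_{\mu} = L_{\mu} - \kappa_{\mu}$ in the numerator to get $s_{\mu} = 12 - 12\kappa_{\mu}/L_{\mu}$. For a stratum $\HH(\mu)$ with several connected components, every ingredient---the sums $M_{d,\mu}$ and $N_{d,\mu}$, the constants $c_{\mu}$ and $\kappa_{\mu}$, and the Lyapunov sum $L_{\mu}$---is defined separately on the hyperelliptic, odd, and even components, so the identical argument applies verbatim to each, yielding the formula component by component.
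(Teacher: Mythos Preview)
Your proposal is correct and follows essentially the same route as the paper: both start from the slope formula $s(\T)=12/(1+\kappa_{\mu}N_{d,\mu}/M_{d,\mu})$, interpret $M_{d,\mu}$ as the height/length--weighted count of horizontal cylinders on the square-tiled surfaces, reduce everything to the limit $\lim_{d\to\infty}M_{d,\mu}/N_{d,\mu}=c_{\mu}$, and then plug in the Eskin--Kontsevich--Zorich relation $L_{\mu}=\kappa_{\mu}+c_{\mu}$. The only difference is in emphasis: the paper defers the key limit to a separate Siegel--Veech/equidistribution argument (its Appendix~A, using \cite{EO} and \cite{EM}), whereas you sketch it in terms of volume asymptotics, but this is the same underlying mechanism.
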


\begin{corollary}
\label{slc-hyp}
For the hyperelliptic component $\HH^{hyp}(2g-2)$, we have 
$$c^{hyp}_{(2g-2)} = \frac{g(2g+1)}{3(2g-1)}, \  L^{hyp}_{(2g-2)} = \frac{g^2}{2g-1}. $$
For the hyperelliptic component $\HH^{hyp}(g-1,g-1)$, we have 
$$c^{hyp}_{(g-1,g-1)} = \frac{(g+1)(2g+1)}{6g}, \  L^{hyp}_{(g-1,g-1)} = \frac{g+1}{2}. $$
\end{corollary}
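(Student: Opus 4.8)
The plan is to deduce all four quantities purely algebraically by combining three ingredients already in hand: the slope formula of Theorem~\ref{slc}, the explicit slope value from Corollary~\ref{slope}, and the Eskin--Kontsevich--Zorich relation $L_\mu = \kappa_\mu + c_\mu$ quoted above. The point is that once $s_\mu$ and $\kappa_\mu$ are known, Theorem~\ref{slc} pins down $L_\mu$, and then the EKZ relation recovers $c_\mu$. Since Corollary~\ref{slope} asserts that $s(\Ti^{hyp}) = 8 + \tfrac{4}{g}$ for \emph{every} $d$ and $i$, the limit of slopes is trivially the same constant, so $s^{hyp}_{(2g-2)} = s^{hyp}_{(g-1,g-1)} = 8 + \tfrac{4}{g}$; no genuine limiting argument is needed.

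First I would compute $\kappa^{hyp}_\mu$ directly from its definition $\kappa_\mu = \tfrac{1}{12}\bigl(2g-2 + \sum_i \tfrac{m_i}{m_i+1}\bigr)$ for each partition. For $\mu = (2g-2)$, with $k=1$ and $m_1 = 2g-2$, one finds $\kappa^{hyp}_{(2g-2)} = \tfrac{1}{12}\bigl(2g-2 + \tfrac{2g-2}{2g-1}\bigr) = \tfrac{g(g-1)}{3(2g-1)}$. For $\mu = (g-1,g-1)$, with $k=2$ and $m_1=m_2=g-1$, one gets $\kappa^{hyp}_{(g-1,g-1)} = \tfrac{1}{12}\bigl(2g-2 + 2\cdot\tfrac{g-1}{g}\bigr) = \tfrac{(g-1)(g+1)}{6g}$.

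Next I would substitute $s^{hyp}_\mu = 8 + \tfrac{4}{g}$ into the second expression of Theorem~\ref{slc}, namely $s_\mu = 12 - \tfrac{12\kappa_\mu}{L_\mu}$. Solving for $L_\mu$ gives the uniform relation $L^{hyp}_\mu = \tfrac{3g}{g-1}\,\kappa^{hyp}_\mu$, valid for both partitions. Plugging in the two values of $\kappa^{hyp}_\mu$ computed above then yields $L^{hyp}_{(2g-2)} = \tfrac{g^2}{2g-1}$ and $L^{hyp}_{(g-1,g-1)} = \tfrac{g+1}{2}$. Finally, applying $c^{hyp}_\mu = L^{hyp}_\mu - \kappa^{hyp}_\mu$ and simplifying produces $c^{hyp}_{(2g-2)} = \tfrac{g^2}{2g-1} - \tfrac{g(g-1)}{3(2g-1)} = \tfrac{g(2g+1)}{3(2g-1)}$ and $c^{hyp}_{(g-1,g-1)} = \tfrac{g+1}{2} - \tfrac{(g-1)(g+1)}{6g} = \tfrac{(g+1)(2g+1)}{6g}$, as claimed.

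I do not expect a substantive obstacle here: all the analytic and geometric content is buried in Theorem~\ref{slc} (which identifies the limit of slopes with $12c_\mu/L_\mu$) and in the EKZ relation, both of which may be invoked. The only points requiring a word of care are that the constant slope from Corollary~\ref{slope} legitimately serves as $s^{hyp}_\mu$, and that one should confirm $g-1 \neq 0$ and $2g-1 \neq 0$ so the divisions are valid, which holds since $g \geq 2$. Everything else is elementary rational-function algebra, and the fact that the two independent computations of $c^{hyp}_\mu$ and $L^{hyp}_\mu$ are mutually consistent with $L = \kappa + c$ serves as an internal check on the arithmetic.
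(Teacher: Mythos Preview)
Your proposal is correct and follows essentially the same route as the paper: compute $\kappa_\mu$ from its definition, use the constant slope $8+\tfrac{4}{g}$ from Corollary~\ref{slope} together with $s_\mu = 12 - \tfrac{12\kappa_\mu}{L_\mu}$ from Theorem~\ref{slc} to solve for $L_\mu$, and then recover $c_\mu = L_\mu - \kappa_\mu$. The only cosmetic difference is that you record the intermediate relation $L^{hyp}_\mu = \tfrac{3g}{g-1}\kappa^{hyp}_\mu$, whereas the paper substitutes the numerical values directly.
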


The above data match with the results in \cite{EKZ}. 

The slopes of $\T$ can be applied to study the cone of effective divisors on $\Mg$. For an effective divisor $D = a\lambda - b\delta$ on $\Mg$ for $a,b > 0$, define its slope 
$$s(D) = \frac{a}{b}.$$ 
There is neither known effective divisor on $\Mg$ with slope $\leq 6$, nor known lower bound better than $O(1/g)$ for slopes of effective divisors as $g$ approaches infinity. Note that an effective divisor $D$ cannot contain all Teichm\"{u}ller curves, if their union is Zariski dense in $\Mg$. Hence, the limit of slopes of these Teichm\"{u}ller curves provides a lower bound 
for the slope of effective divisors. Given an expectation by Eskin and Zorich on asymptotics of Siegel-Veech constants, the best slope growth of these Teichm\"{u}ller curves turns out to be $576/5g$, which coincides with that of Harris-Morrison's moving curves \cite{HM1}. This coincidence is amusing in that Harris-Morrison's curves are moving while the Teichm\"{u}ller curves
are rigid. 

\begin{conjecture}
There exist effective divisors on $\Mg$ whose slopes can be arbitrarily close to $576/5g$ as $g$ approaches infinity. 
\end{conjecture}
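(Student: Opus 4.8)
The conjecture asserts that the lower bound on slopes coming from Teichm\"{u}ller curves is asymptotically sharp, so the plan is to separate the (largely established) lower bound from the (genuinely open) construction of effective divisors attaining it. For the lower bound I would first promote the Density theorem (Theorem~\ref{dense}) from $\Hg$ to all of $\Mg$: letting $\mu$ range over all strata $\HH(\mu)$, not merely the hyperelliptic ones, and letting the degree $d$ grow, one expects the union of the images $h(\Ti)$ to be Zariski dense in $\Mg$. Since a fixed effective divisor $D=a\lambda-b\delta$ cannot then contain this whole family, it omits curves $\Ti$ drawn from the slope-minimizing strata and meets them nonnegatively, giving $s(D)=a/b\geq s(\Ti)$; as $g\to\infty$ these slopes tend to $576/5g$ by Theorem~\ref{slc} together with the Eskin--Zorich asymptotics for the Siegel--Veech constants $c_\mu$, so every effective $D$ satisfies $s(D)\geq 576/5g$ in the limit.

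The substantive content is to produce effective divisors whose slopes approach $576/5g$ from above. The organizing framework is the duality between the pseudo-effective cone and the cone of moving curves, which on $\Hg$ already underlies the Extremity theorem (Theorem~\ref{extremal}). If the slope-minimizing Teichm\"{u}ller classes---or the numerically coincident Harris--Morrison moving curves---span an extremal ray of the moving cone of $\Mg$, then the dual facet of the pseudo-effective cone consists of classes of slope exactly $576/5g$; any class just inside this facet lies in the big cone and is therefore effective after passing to a multiple, with slope arbitrarily close to $576/5g$. Concretely I would attempt to realize such boundary classes as degeneracy loci of maps between Hodge-theoretic bundles on $\Mg$, whose Chern classes are controlled by the Lyapunov data entering the relation $L_\mu=\kappa_\mu+c_\mu$, tuning the construction so that the coefficients of $\lambda$ and $\delta$ reproduce the predicted ratio.

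The main obstacle is precisely this construction. Duality only certifies $576/5g$ as a lower bound: by itself it produces no effective divisor attaining the value, since the relevant facet of the pseudo-effective cone need not consist of effective classes, and one cannot exclude a moving curve of slope exceeding $576/5g$ without already exhibiting such divisors---so the reformulation is circular rather than decisive. Since every effective divisor constructed to date on $\Mg$ has slope at least $6$, whereas $576/5g\to 0$, closing the gap requires a genuinely new family of low-slope effective divisors, plausibly one read directly off the flat geometry of the square-tiled surfaces parameterized by $\T$. It is this step, and not the intersection-theoretic bookkeeping, that keeps the statement a conjecture.
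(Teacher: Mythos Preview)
The statement is a conjecture, and the paper does not prove it either; what follows the conjecture in Section~3 is explicitly labeled ``An heuristic argument,'' and the author admits he ``has not been able to find further evidence.'' Your proposal is essentially the same heuristic: both you and the paper invoke the BDPP duality between the pseudo-effective cone and the cone of moving curves, note the numerical coincidence that the rigid Teichm\"uller curves $\T$ and the moving Harris--Morrison curves $Z_{k,g}$ share the asymptotic slope $576/5g$, and argue that if their common limiting ray is extremal in the moving cone of $\Mg$ then the dual facet of the pseudo-effective cone would contain a divisor $D$ of slope close to $576/5g$. You go slightly further than the paper in spelling out why the duality alone is insufficient (the facet need not consist of effective classes, and one cannot certify extremality without already exhibiting the divisor), and your suggestion to look for degeneracy loci of Hodge-theoretic bundle maps is your own addition, not in the paper. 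But the core reasoning and the honest acknowledgment that the construction step is genuinely open match the paper exactly.
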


See the end of section 3 for more details on this slope problem. 

This paper is organized as follows. In section 2, we study $\T$ for $\mu = (2g-2)$ or $(g-1,g-1)$. In section 3, we interpret the covers as square-tiled surfaces and study the relation between slopes of $\T$ and Lyapunov exponents. In section 4, we analyze a few examples of square-tiled surfaces with a unique zero. In appendix A, we include an explanation for the relation between 
the limit of slopes of arithmetic Teichm\"{u}ller curves and the Siegel-Veech area constant of the corresponding stratum of Abelian differentials. In appendix B, we list the limit of slopes of $\T$ in each stratum for small genus. Throughout the paper, we work over the complex number field $\CC$. A divisor stands for a $\QQ$-Cartier divisor. We use cusp to denote an intersection point of a Teichm\"{u}ller curve with the boundary of the moduli space. When we refer a torus covering to a square-tiled surface, we emphasize it as a Riemann surface.  \\

\noindent {\bf Acknowledgements.} Section 2 was motivated by conversations with Valery Alexeev. Section 3 and appendix A were written under the help of Alex Eskin. I am very grateful
to both of them for allowing me to use results from related discussions. I sincerely thank Anton Zorich for running a program to compute the limits of slopes for small genus, which are included in appendix B. I also want to express my gratitude to Curt McMullen and Martin M\"{o}ller for explaining to me the rigidity of Teichm\"{u}ller curves.
During the preparation of this work, I had correspondences with many people, including Matt Bainbridge, Izzet Coskun, Lawrence Ein, Angela Gibney, J\'{a}nos Koll\'{a}r, James McKernan, Ian Morrison, Ronen Mukamel, Thomas Peternell and Christian Schnell, among others. I thank all of them for help and encouragement. The results were first announced at KIAS ``Workshop on Moduli and Birational Geometry'', Seoul, Dec 2009. I would like to thank Yongnam Lee for invitation and hospitality. 

\section{Density, Rigidity and Extremity}
In this section, we will prove Theorems~\ref{dense}, \ref{rigid} and \ref{extremal}. 

\begin{proof}[Proof of Theorem~\ref{dense}]
The complex dimension of $\HH(2g-2)$ equals $2g$. Take a symplectic basis $\gamma_{1},\ldots,\gamma_{2g}$ of a genus $g$ Riemann surface $C$.
The period map $\Phi:(C,\omega)\rightarrow \mathbb C^{2g}$ is given by 
$$ \Phi(C,\omega)=\Big(\int_{\gamma_{1}}\omega,\ldots,\int_{\gamma_{2g}}\omega\Big),$$
which provides a local coordinate chart for $\HH(2g-2)$ \cite{K}. 

Take the standard torus $T$ by gluing the parallel sides of $[0,1]^2$. Consider the local coordinates $\Phi(C,\omega)=(\phi_{1},\ldots, \phi_{2g})\in \mathbb C^{2g}\cong \RR^{4g}$. 
By \cite[Lemma 3.1]{EO}, we have $\phi_i\in \mathbb Z^2$ for $i=1,\ldots, 2g$ if and only if the following holds: \\
(1) there exists a holomorphic map $f: C\rightarrow T$; \\ 
(2) $\omega = f^{-1}(dz)$; \\ 
(3) $f$ has a unique ramification point at $p$ and $(\omega) = (2g-2)p$; \\
(4) the degree of $f$ is equal to $\frac{\sqrt{-1}}{2}\int_{C}\omega\wedge\overline{\omega}. $ 

This establishes a correspondence between integer points of $\HH(2g-2)$ and genus $g$ covers $C\rightarrow T$ with a unique ramification point. 
It follows that such covers form a Zariski dense subset in $\HH(2g-2)$. Since $\HH^{hyp}(2g-2)$ is a connected component of $\HH(2g-2)$, 
the union of hyperelliptic components $\mathcal T_{d,(2g-2),i}^{hyp}$ of Teichm\"{u}ller curves $\mathcal T_{d,(2g-2)}$ for all $d, i$ forms a Zariski dense subset in $\HH^{hyp}(2g-2)$. 
Since $\HH^{hyp}(2g-2)$ admits a dominant map to $\Hg$, the image of the union is Zariski dense in $\Hg$. Using the isomorphism $\Hg\cong \Mogs$ and the finite morphism $\Mog\rightarrow \Mogs$, the pre-image of the union in $\Mog$ is also Zariski dense. 

For $\HH(g-1,g-1)$, it has dimension $2g+1$. Take a path $\gamma_{2g+1}$ connecting the two zeros $p_1, p_2$ of $\omega$ on $C$. 
The period map $\Phi:(C,\omega)\rightarrow \mathbb C^{2g+1}$ is given by 
$$ \Phi(C,\omega)=\Big(\int_{\gamma_{1}}\omega,\ldots,\int_{\gamma_{2g+1}}\omega\Big).$$
The rest argument is the same as the previous case. 
\end{proof}

A Teichm\"{u}ller curve on $\mathcal M_g$ is an algebraic geodesic with respect to the Kobayashi (equivalently, Teichm\"{u}ller) metric. More precisely, pull back the Hodge bundle from $\mathcal M_g$ to the Teichm\"{u}ller space $\mathcal T_g$ and consider it as a real manifold. There is an SL$(2,\RR)$ action on the Hodge bundle, induced by the natural SL$(2,\RR)$ action on the real and imaginary parts of a holomorphic 1-form. The fibers of the Hodge bundle are stabilized by SO$(2,\RR)$. The induced map $\mathbb H\cong \mbox{SL}(2,\RR) / \mbox{SO}(2,\RR)\rightarrow \mathcal T_g$ is a holomorphic isometry. In some rare occasions, the image of the composite map
$\mathbb H\rightarrow \mathcal T_g\rightarrow \mathcal M_g$ forms an algebraic curve in $\mathcal M_g$ and we call it a Teichm\"{u}ller curve. In general, for $f: C\rightarrow \Mg$, we say that $(C, f)$ is a Teichm\"{u}ller curve if $f$ is a local isometry with respect to the Kobayashi metrics on  domain and range in $\mathcal M_g$.  

The 1-dimensional Hurwitz spaces $\Ti$ parameterizing torus covers with a unique branch point (also called square-tiled surfaces or origamis, as we will see in section 3) are invariant under the SL$(2,\RR)$ action, since the action amounts to varying the defining lattice of the target elliptic curve. Then $h: \Ti \rightarrow \Mg$ is a local isometry and it is called an arithmetic Teichm\"{u}ller curve. The rigidity of $\Ti$ essentially follows from the rigidity of general Teichm\"{u}ller curves. 

\begin{proof}[Proof of Theorem~\ref{rigid}]
McMullen \cite{CTM} and M\"{o}ller \cite{M} both proved that a Teichm\"{u}ller curve is rigid on $\Mg$. In fact, they proved that the rigidity keeps for a finite morphism of a 
Teichm\"{u}ller curve unramified away from cusps, where a cusp corresponds to a singular covering in the Hurwitz space in our setting. A component $\Ti$ of $\T$ is an arithmetic Teichm\"{u}ller curve, hence it is rigid on $\Mg$. If $\Ti^{hyp}$ maps into $\Hg\subset \Mg$, then $\Ti^{hyp}$ is also rigid on $\Hg\cong \Mogs$. 

Consider the finite morphism $f: \Mog\rightarrow \Mogs\cong \Hg$. Let $U \subset H_g$ denote the branch locus of $f$ restricted to the interior of the moduli space. For $g\geq 3$, we claim that infinitely many of $h(\Ti^{hyp})$ do not meet $U$. Then $f^{-1}(h(\Ti^{hyp}))\rightarrow h(\Ti^{hyp})$ is unramified away from cusps, hence the pullback of $\Ti^{hyp}$ via $f$ 
is also rigid on $\Mog$. 

By \cite[Lemma 3.3]{KM}, the codimension of $U$ is at least two in $H_g$ for $2g+2\geq 7$, i.e. $g\geq 3$. Let $U(\mu)$ denote the locus of pairs $(C, \omega)$ in $\HH^{hyp}(\mu)$ for $\mu = (2g-2)$ or $(g-1,g-1)$, where $C$ is parameterized in $U$. If $U$ intersects $h(\Ti^{hyp})$, then $U(\mu)$ has to intersect the SL$(2,\mathbb R)$ invariant orbit 
$\Ti^{hyp}$ in $\HH^{hyp}(\mu)$, which is generated by an integer point under the period map coordinates of $\HH^{hyp}(\mu)$. By dimension count, the SL$(2,\mathbb R)$ orbit of $U(\mu)$ is a proper subspace of $\HH^{hyp}(\mu)$, which must miss infinitely many integer points. Namely, the SL$(2,\mathbb R)$ orbits of these integer points do not meet $U(\mu)$. Consequently, the images of the Teichm\"{u}ller curves $\Ti^{hyp}$ generated by these integer points do not meet $U$ in $H_g$. 
\end{proof}

Now Corollary~\ref{rigideven} follows immediately as a consequence of Theorems~\ref{dense} and ~\ref{rigid}. 

Let $R$ denote a rigid curve on $\Mon$. Let $\phi: \Mono\rightarrow \Mon$ be the morphism forgetting a marked point and stabilizing the rest part of the curve. 
Note that $S = \phi^{-1}(R)$ is a ruled surface over $R$ with $n$ sections. Each section can be regarded as a curve on $\Mono$ contained in its boundary components 
$\bigcup_{|I|=2} D_I$. 

\begin{lemma}
\label{ruled}
The $n$ sections of $S$ are rigid curves on $\Mono$. 
\end{lemma}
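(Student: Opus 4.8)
The plan is to use the forgetful map $\phi$ to transfer the rigidity of $R$ to the sections upstairs, and then to rule out any remaining motion inside the ruled surface $S$ by a normal bundle computation. Label the $n$ sections $\sigma_1,\dots,\sigma_n$, where $\sigma_i$ records the collision of the forgotten $(n+1)$-st point with the $i$-th marked point. Its image lies in the boundary divisor $D_{\{i,n+1\}}$ and is precisely $S\cap D_{\{i,n+1\}}$, a section of $\phi|_S\colon S\to R$; in particular $\phi\circ\sigma_i$ is the inclusion $R\hookrightarrow \Mon$.

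First I would confine any deformation to $S$. Given a family $g_t$ of maps to $\Mono$ with $g_0=\sigma_i$, the composite family $\phi\circ g_t$ deforms the inclusion $R\hookrightarrow\Mon$. Since $R$ is rigid on $\Mon$, this family must be isotrivial, so its image is the fixed curve $R$ for all $t$, and hence $g_t(R)\subset\phi^{-1}(R)=S$. It therefore suffices to prove that $\sigma_i$ is rigid as a curve in $S$: a non-isotrivial deformation inside $\Mono$ that is confined to $S$ would already be a non-isotrivial deformation inside $S$.

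Next I would show $\sigma_i(R)$ is infinitesimally rigid in $S$. The section meets each fiber of $S\to R$ at the smooth marked point $p_i$, so it lies in the smooth locus of $S$, and its normal bundle in the surface $S$ is the vertical tangent line along the section, namely the restriction of $-\psi_i$ (the dual of the relative cotangent line, which restricts to $\psi_i$). Thus $N_{\sigma_i(R)/S}\cong -\psi_i|_R$ has degree $-(\psi_i\cdot R)$. Because $R$ meets the interior $M_{0,n}$ and $\psi_i$ is big and nef --- it is the pullback of $\OO(1)$ under the $i$-th Kapranov morphism, which contracts only boundary strata --- we get $\psi_i\cdot R>0$, so $\deg N_{\sigma_i(R)/S}<0$ and $H^0(N_{\sigma_i(R)/S})=0$. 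Combined with the confinement step, this shows $\sigma_i$ is rigid on $\Mono$, and the identical argument applies to each of the $n$ sections.

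The hard part will be the sign of the normal bundle, i.e. ensuring $\psi_i\cdot R>0$ rather than $\psi_i\cdot R=0$: if $R$ were contained in a boundary stratum contracted by the $i$-th Kapranov morphism, the section could have trivial normal bundle and move in a one-parameter family inside $S$, so the argument genuinely uses that $R$ meets the interior of $\Mon$ (as do the rigid curves arising in our application). An alternative that avoids the $\psi$-computation is to run the confinement step a second time with the forgetful map $\phi_i$ dropping the $i$-th point instead of the $(n+1)$-st, which pins $g_t(R)$ into $S\cap\phi_i^{-1}(R')$ for the relabeled rigid curve $R'$; a dimension count identifies this locus with $\sigma_i(R)$ near the section, the technical nuisance there being to exclude spurious components of the intersection.
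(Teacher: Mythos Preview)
Your proof follows the same two-step skeleton as the paper's: first use rigidity of $R$ downstairs to confine any deformation of a section to the surface $S=\phi^{-1}(R)$, then show the section has negative self-intersection in $S$ and hence cannot move there. The paper carries out the second step by observing that $S$ is obtained from $R\times\Po$ by successive blow-ups and that each section passes through blow-up centers, forcing $\Gamma^2<0$. You instead identify the normal bundle of $\sigma_i$ in $S$ with the tangent line at the $i$-th marked point, so $\sigma_i^2=-\psi_i\cdot R$, and then invoke bigness and nefness of $\psi_i$ via the Kapranov morphism. These are two computations of the same number, so the arguments are equivalent; what you gain is an explicit identification of the hypothesis actually used, namely that $R$ meets the interior of $\Mon$ (so that the Kapranov map does not contract $R$). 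The paper's blow-up argument tacitly needs the same thing---a section avoiding every blow-up center would have $\Gamma^2=0$---but does not flag it. For the intended application to Teichm\"uller curves this hypothesis is of course satisfied.
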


\begin{proof}
Let $\Gamma$ denote a section of $S$. Since $\phi (\Gamma) = R$, if $\Gamma$ deforms in a surface $S'$ in $\Mono$, the image $\phi(S')$ must be $R$.
Otherwise $R$ would deform in $\phi(S')$, which contradicts its rigidity. Therefore, we get $S' = S$. But $S$ can be constructed from successive blow-ups of 
$R\times \Po$ and each blow-up decreases the self-intersection of a section passing through the blow-up center. Hence, $\Gamma^{2} < 0$ and $\Gamma$ does not deform in $S$. 
\end{proof}

\begin{proof}[Proof of Corollary~\ref{rigidodd}]
Consider the following diagram: 
$$\xymatrix{
\Mono  \ar[d]^{\phi}  &    \\
 \Mon \ar[r]^{f}             &  \Mons      }
$$ 

Let $R$ denote a Teichm\"{u}ller curve on $\Mons$ such that 
$f^{-1}(R)\rightarrow R$ is unramified away from the cusps of $R$, i.e. the image of $R$ does not meet the branch locus of $f$ in the interior of $\Mons$. 
Then $R$ and $f^{-1}(R)$ are rigid on $\Mons$ and $\Mon$, respectively. Interpret $\phi$ as the morphism forgetting the $i$-th marked point and reordering the other $n$ marked points. 
In other words, $\phi$ is determined by a set bijection $\{1,\ldots,n \} \rightarrow \{1,\ldots,\hat{i},\ldots, n+1 \}$. Abuse our notation and also use $\phi$ to denote this bijection. 
Let $R_{i,\phi(j)}$ be the $j$-th section 
of the universal curve over $f^{-1}(R)$ for such $\phi$, $1\leq j \leq n$. By Lamma~\ref{ruled}, $R_{i,\phi(j)}$ is rigid on $\Mono$ and contained in the boundary component $D_{I}$, where $I = \{i, \phi(j) \}$. Consider all possible $R, \phi, i, j$ and we obtain infinitely many rigid curves $R_{i,\phi(j)}$. Corollary~\ref{rigidodd} now follows as a consequence of Theorems~\ref{dense} and \ref{rigid}. 
\end{proof}

Let us analyze the cusps of $\T$ parameterizing singular admissible covers over a rational nodal curve for $\mu = (2g-2)$ or $(g-1,g-1)$. This can help us understand the intersection of $h(\T)$ with the boundary components of moduli spaces. See \cite[3.G]{HM2} for an introduction on admissible covers. 

\begin{proposition}
\label{stable limit}
If $\pi: C \rightarrow E_0$ parameterized by $\mathcal T_{d,(2g-2)}$ or $\mathcal T_{d,(g-1,g-1)}^{hyp}$ is an admissible cover over a rational nodal curve $E_0$, the stable limit of $C$ is an irreducible nodal curve. 
\end{proposition}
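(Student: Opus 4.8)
The plan is to read the source curve $C$ directly off the combinatorics of the admissible cover and then stabilize. The target $E_0$ is an irreducible nodal rational curve; normalizing its node $\nu$ gives a $\Po$ with two distinguished preimages $0,\infty$, and the unique branch point $q$ lies away from them. By the definition of an admissible cover, $C$ has nodes exactly over $\nu$, and normalizing $C$ at those nodes produces a smooth, possibly disconnected curve $\hat C$ together with a degree $d$ cover $\hat C\to\Po$ branched only over $0,\infty,q$. Such a cover is recorded by monodromy $\sigma_0,\sigma_\infty,\sigma_q\in S_d$ with $\sigma_0\sigma_\infty\sigma_q=1$, where $\sigma_q$ has the cycle type prescribed by $\mu$. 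The curve $C$ is recovered from $\hat C$ by gluing its fibre over $0$ to its fibre over $\infty$ via the permutation $b$ coming from the cycle of the torus that pinches; connectedness of $C$ is exactly transitivity of $\langle a,b\rangle$, where $a$ denotes the monodromy of the surviving cycle and $\sigma_0,\sigma_q$ are words in $a,b$.

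First I would decompose $\hat C$ into connected components, which are the orbits of $G=\langle\sigma_0,\sigma_q\rangle$, and separate them into the \emph{special} ones meeting the support of the nontrivial cycles of $\sigma_q$ and those on which $\sigma_q$ acts trivially. A non-special component is a connected cover of $\Po$ branched only over $0,\infty$, hence a copy of $\Po$ totally ramified over each, carrying exactly one point over $0$ and one over $\infty$. I would then observe that transitivity of $\langle a,b\rangle$ prevents the gluing $b$ from preserving such a rational orbit setwise, since an invariant orbit would be $\langle a,b\rangle$-invariant and contradict connectedness of $C$. Consequently every non-special $\Po$ is glued at its two points to the rest of $C$, so these rational components assemble into chains and bridges that are contracted by stabilization, contributing only nodes and never an isolated positive-genus piece.

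It then remains to count the special components, since the stable limit is irreducible precisely when there is exactly one. For $\mu=(2g-2)$ the nontrivial part of $\sigma_q$ is a single $(2g-1)$-cycle; as $\sigma_q\in G$, its support lies in one $G$-orbit, so there is a unique special component $\hat C_0$. After contracting all rational chains, the stable limit is the image of $\hat C_0$ with its gluing points identified in pairs, an irreducible nodal curve (of arithmetic genus $g$ by flatness). For $\mu=(g-1,g-1)$ the nontrivial part of $\sigma_q$ consists of two $g$-cycles supported on sets $S_1,S_2$, each contained in a single orbit, and the whole statement reduces to showing that $S_1$ and $S_2$ lie in the \emph{same} orbit; otherwise $b$ would join two distinct positive-genus components at nodes and the stable limit would be reducible.

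The main obstacle is exactly this last point in the hyperelliptic case. I would exploit the hyperelliptic involution $\tau$ on $(C,\omega)$, which satisfies $\tau^{*}\omega=-\omega$ and interchanges the two zeros $p_1,p_2$ of $\omega$; this symmetry descends to the admissible cover and is encoded in the monodromy data distinguishing $\HH^{hyp}(g-1,g-1)$, cf. the monodromy criterion \cite[Theorem 1.18]{C}. Since $\tau$ is a single automorphism of the connected surface exchanging $S_1$ and $S_2$ while permuting the orbit decomposition of $\hat C$, the plan is to argue that $\tau$ must carry the special component to itself rather than to a distinct partner, forcing both $S_1$ and $S_2$ onto a unique $\tau$-invariant special component. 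Verifying this compatibility rigorously — that the hyperelliptic symmetry of the symmetric square-tiled surface persists through the degeneration and genuinely glues the two $g$-cycles into one component, rather than permitting a $\tau$-swapped pair — is the crux of the argument; once it is established, the unique-special-component analysis above applies verbatim and the stable limit is an irreducible nodal curve.
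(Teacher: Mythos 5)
Your treatment of the case $\mu=(2g-2)$ is correct and reaches the paper's structural conclusion by a genuinely different route. Where the paper analyzes each non-distinguished component of the admissible cover by pairing up the local branches over the node of $E_0$ and then applying Riemann--Hurwitz to its normalization to force genus $0$ and exactly two attaching points, you read the same structure off the monodromy of the induced cover of the normalized target $\mathbb P^1$ branched over $\{0,\infty,q\}$: a component on which $\sigma_q$ acts trivially is a connected cover of $\mathbb P^1$ branched over at most two points, hence a $\mathbb P^1$ totally ramified over $0$ and $\infty$, and connectedness of $C$ forces these rational pieces into contracted chains. This is shorter and, for $(2g-2)$, complete, since the support of the single $(2g-1)$-cycle of $\sigma_q$ lies in a single orbit of $\langle\sigma_0,\sigma_q\rangle$, so there is exactly one surviving component. (Two small points: the gluing permutation is the monodromy of the cycle \emph{transverse} to the vanishing cycle, not of the pinching cycle, so your labels $a,b$ are switched; and one should remark that the special component cannot itself be a two-pointed rational curve, else the arithmetic genus of $C$ would be at most $1$. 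Neither affects the argument.)

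The genuine gap is exactly where you flag it: for $\mu=(g-1,g-1)$ you must exclude the configuration in which the two $g$-cycles of $\sigma_q$ lie in two \emph{distinct} special components interchanged by the hyperelliptic involution, and your proposal stops at announcing that this compatibility "should be verified." The involution of the degenerate square-tiled surface by itself does not obviously forbid a $\tau$-swapped pair of special components --- that is precisely the scenario to be ruled out, so invoking the symmetry is not yet an argument. The paper closes this by working on the other side of the hyperelliptic structure rather than through the monodromy: since the image lies in $\overline{H}_g$, the stable limit of $C$ carries an admissible double cover $C\to B$ of a stable pointed rational curve whose deck transformation $\iota$ acts trivially on $B$ and hence preserves every fiber, so it sends each component of $C$ to a component with the same image in $B$. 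If $p_1$ and $p_2$ lay on distinct components $C_1,C_2$, these map to different components of $B$, so $\iota$ cannot interchange them, contradicting the defining property of $\mathcal H^{hyp}(g-1,g-1)$ that the hyperelliptic involution exchanges the two zeros of $\omega$. This places both zeros on a single component $C_0$ at the outset, after which the one-distinguished-component analysis (yours or the paper's) applies verbatim. To complete your write-up you need an argument of this kind, i.e., an invariant (here, the image component in the target of the limiting double cover) that separates the two putative special components and is preserved by $\iota$.
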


\begin{proof}
Consider the case $\mu = (2g-2)$ first. Let $C_0$ be the irreducible component of $C$ that contains the unique ramification point $p$. If $C_1$ is another irreducible component of $C$, note that $\pi$ restricted to $C_1$ is not ramified away from the nodes of $C$. We claim that $C_1$ is a smooth rational curve and it meets $\overline{C\backslash C_1}$ at two points. 

Suppose the restriction of $\pi$ to $C_1$ is a degree $d_1$ map to $E_0$ and $q$ is the node of $E_0$. Let $(U,V)$ denote the two local branches of $q$ in $E_0$ with 
local coordinates $(u, v)$. If $C_1$ is singular, let $r_1, \ldots, r_m$ be its nodes. Note that $\pi(r_i) = q$ and locally around $r_i$, $\pi$ is given by $(x,y)\rightarrow (u= x^{a_i}, v = y^{a_i})$. 
Let $s$ be an intersection point in $\{C_1\cap\overline{C\backslash C_1}\}$ and $\pi (s) = q$. If a local neighborhood of $s$ on $C_1$ maps to $U$ ($V$) given by $u=x^{b}$ 
($v = x^{b})$, we call $s$ of type $(u, b)$ ($(v, b)$). Starting from $s$ of type $(u, b)$, by the definition of admissible covers, $s$ also belongs to another component $C_1'$ of $C$, where $s$ as a smooth point on $C_1'$ is of type $(v, b)$. Then there exists $s'$ as a smooth point on $C_1'$ such that $\pi (s') = q$ and $s'$ on $C_1'$ is of type 
$(u, b)$. To pair with $s'$, there ought to be some $s''$ as a smooth point on a component $C_1''$ such that $\pi (s'') = q$ and $s''$ on $C_1''$ is of type 
$(v, b)$. Since there are finitely many components, the process has to stop at some stage. Namely, it ends with a point, say $t$ on the starting component $C_1$ such that 
$t$ is of type $(v, b)$, which pairs with $s$ of type $(u, b)$ mapping to $q$. Therefore, the set $\{C_1\cap\overline{C\backslash C_1}\}$ contains even number of points that decompose into 
pairs $(s_1, t_1), \ldots, (s_k, t_k)$. Locally around $(s_j, t_j)$ on $C_1$, $\pi$ is given by $(x,y)\rightarrow (u = s_j^{b_j}, v = t_j^{b_j})$. 

Let $C_1^{\nu}$ be the normalization of $C_1$ and $r_i', r_i''$ be the pre-image of $r_i$. Let $q'$ and $q''$ be the two points on $\Po$ identified as the node $q$ on $E_0$. 
The map $\pi$ induces a degree $d_1$ branched cover $\pi^{\nu}: C_1^{\nu}\rightarrow \Po$ 
with ramification points $r_1', r_1'', \ldots, r_m', r_m''$ and $s_1, t_1, \ldots, s_k, t_k$. 
The ramification order of $r_i'$ and $r_i''$ is equal to $a_i-1$. The ramification order of $s_j$ and $t_j$ 
is equal to $b_j-1$. By Riemann-Roch, we have 
$$2g(C_1^{\nu}) - 2 + 2d_1 = 2 \sum_{i=1}^m (a_i-1) + 2\sum_{j=1}^k (b_j-1),$$ 
that is, 
$$g(C_1^{\nu}) -1 + d_1 =  \sum_{i=1}^m a_i +\sum_{j=1}^k b_j - m - k. $$
Since $\pi^{\nu}$ maps $r_1', \ldots, r_m'$ and $s_1, \ldots, s_k$ to $q'$, we have $\sum_{i=1}^m a_i +\sum_{j=1}^k b_j \leq d_1$. 
Then we get $0 \leq g(C_1^{\nu}) \leq 1 - m - k,$ which implies $m+k \leq 1$. 
But $2k = | C_1\cap\overline{C\backslash C_1} |$ is positive, otherwise $C$ would be disconnected. So the only possibility 
is $g(C_1^{\nu}) = m = 0$ and $k = 1$. This says that $C_1$ is a smooth rational curve and $| C_1\cap\overline{C\backslash C_1} |= 2$.  

Using the same argument, the intersection points $\{C_0\cap\overline{C\backslash C_0}\}$ decompose in pairs. For two points in the same pair, they are connected by 
a chain of rational curves, each of which has the same property as $C_1$. Hence, the whole curve $C$ looks like the following: 
\begin{figure}[H]
    \centering
    \psfrag{C0}{$C_0$}
    \psfrag{s1}{$s_{1}$}
    \psfrag{s2}{$s_{2}$}
    \psfrag{s3}{$s_{3}$}
    \psfrag{t1}{$t_1$}
    \psfrag{t2}{$t_{2}$}
    \psfrag{t3}{$t_{3}$}
    \includegraphics[scale=0.6]{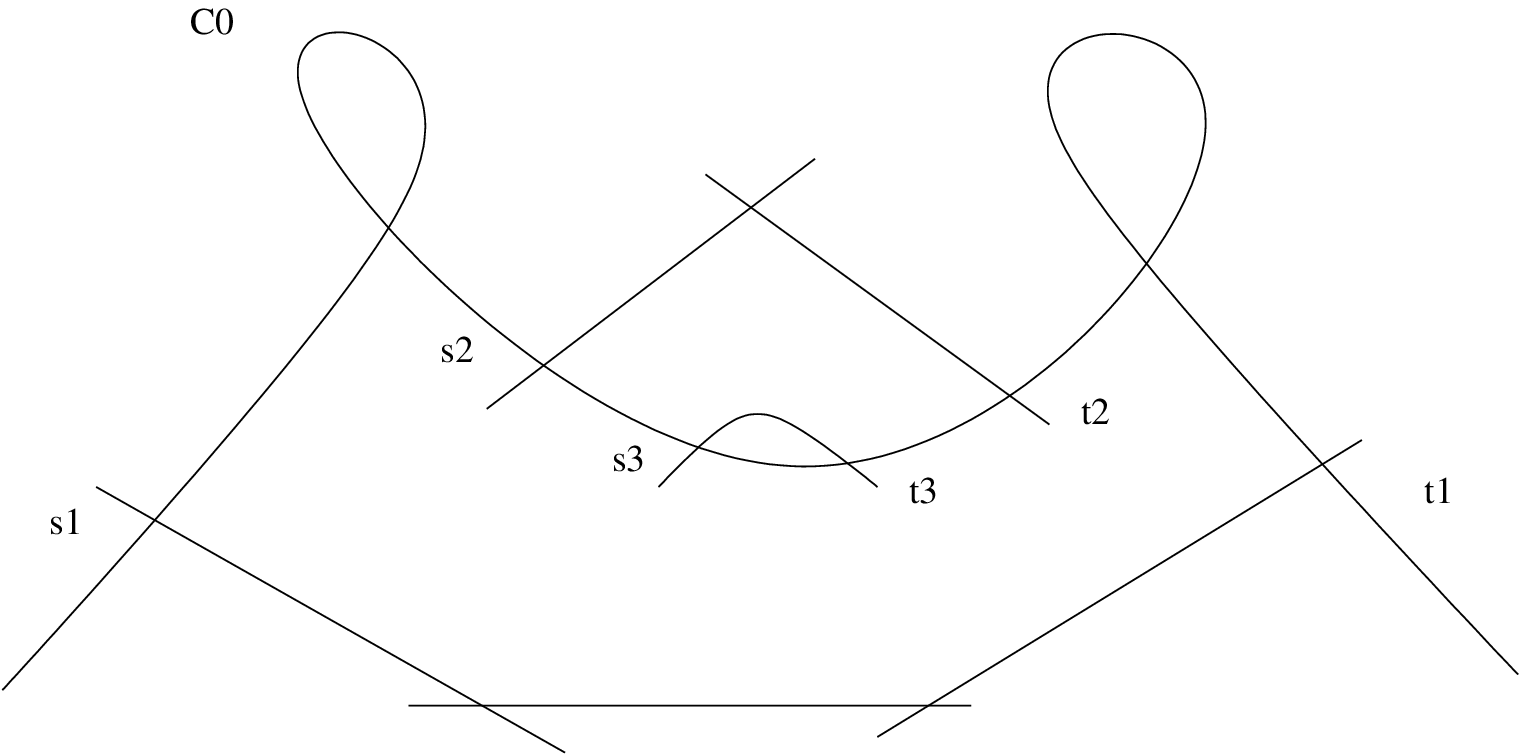}
    \end{figure}
In the above picture, $C_0$ is the component containing the unique ramification point, and the points in $\{C_0\cap\overline{C\backslash C_0}\}$ decompose to 
pairs $(s_i, t_i)$ such that $s_i$ and $t_i$ are linked by a chain of smooth rational curves. Blowing down all the smooth rational components, the stable limit of $C$ is 
an irreducible nodal curve by gluing the points $\{C_0\cap\overline{C\backslash C_0}\}$ in pairs. 

For $\mu = (g-1,g-1)$, by the definition of $\HH^{hyp}(g-1,g-1)$ \cite{KZ}, the two zeros $p_1, p_2$ of the holomorphic 1-form $\omega$ on $C$ are switched by the hyperelliptic involution $\iota$. If $p_1, p_2$ belong to two different irreducible components $C_1$ and $C_2$ of the stable limit of $C$, then $C$ admits a double cover of a rational curve that maps $C_1$ and $C_2$ to different components of the rational curve. Since the target rational curve is fixed by $\iota$, the two zeros $p_1, p_2$ cannot be switched by $\iota$, contradiction. Hence, $p_1, p_2$ are contained in the same component $C_0$. The rest argument is the same as the previous case. 
\end{proof}

\begin{remark}
Proposition~\ref{stable limit} may fail for singular admissible covers in non-hyperelliptic components of $\mathcal T_{d, (g-1,g-1)}$. For instance, let $E$ be an elliptic curve that admits a 
triple cover of $\mathbb P^1$ with three ramification points $p, q, r$ of ramification order two. Glue $E$ with another copy $E'$ at $p=q', q=p'$ and we obtain a reducible curve $C$ with two nodes. The arithmetic genus of $C$ is three. Then $C$ admits a sextic cover of a rational 1-nodal curve $E_0$ such that $p, q$ map to the node of $E_0$ and $r, r'$ are the only two ramification points in $C_{sm}$ with ramification order two. See the picture below: 
\begin{figure}[H]
    \centering
    \psfrag{E}{$E$}
    \psfrag{F}{$E_0$}
    \psfrag{P}{$\mathbb P^1$}
    \psfrag{p}{$p$}
    \psfrag{q}{$q$}
    \psfrag{r}{$r$}
    \psfrag{C}{$C$}
    \psfrag{3:1}{$3:1$}
    \psfrag{6:1}{$6:1$}
    \psfrag{7}{$p=q'$}
    \psfrag{2}{$q=p'$}
    \psfrag{4}{$r'$}
    \psfrag{5}{$E'$}
    \includegraphics[scale=0.6]{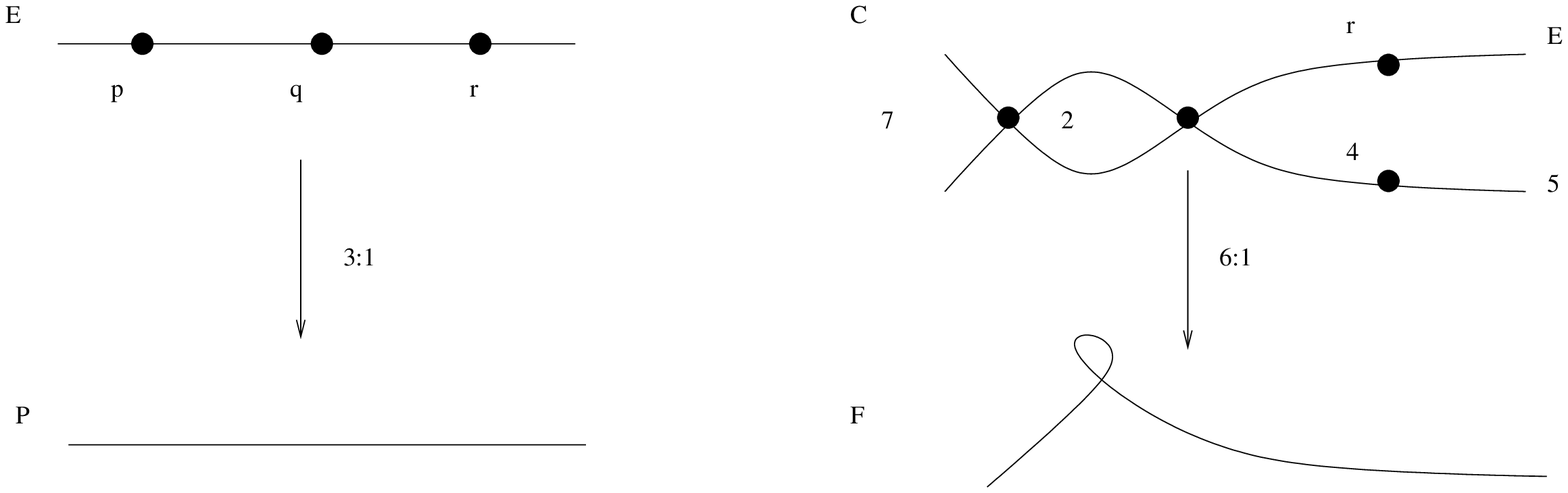}
    \end{figure}
This cover $C\rightarrow E_0$ is a limit of smooth covers in $\mathcal T_{6,(2,2)}$. Note that $C$ itself is stable but reducible. 
The essential difference of this cover from those in $\mathcal T_{d,(g-1,g-1)}^{hyp}$ is that the two zeros $r, r'$ are contained in two distinct stable components. For a cover in $\mathcal T_{d,(2g-2)}$, since the holomorphic 1-form $\omega$ has a unique zero, this issue does not occur. Moreover, though $C$ is reducible in the example, it is not contained in boundary components $\delta_i$ of $\Mg$ for any $i>0$. This holds in general for singular admissible covers in any $\T$ \cite[Proposition 3.1]{C}.  
\end{remark}

Now we prove the extremity of $\Ti^{hyp}$ for $\mu = (2g-2)$ or $(g-1,g-1)$. 

\begin{proof}[Proof of Theorem~\ref{extremal}]
For $\mu = (2g-2)$ or $(g-1,g-1)$, we want to show that $h(\Ti^{hyp})$ does not intersect the boundary component $D_k$ for $k > 2$. Let $\phi: C\rightarrow B$  
be an admissible double cover over a stable $(2g+2)$-pointed rational curve $B$ such that the stable limit of $C$ is the same as that of an admissible cover 
over a rational 1-nodal curve $E_0$ parameterized by $\Ti^{hyp}$. The map $\phi$ is branched at the $2g+2$ marked points of $B$. 
By Proposition~\ref{stable limit}, the stable limit of $C$ is an irreducible nodal curve. Then $C$ consists of 
a smooth component $C_0$ with $m$ pairs of points $(s_i, t_i)$ on $C_0$, where $s_i, t_i$ are linked by a chain of smooth rational curves in $C$. See the picture below: 
\begin{figure}[H]
    \centering
    \psfrag{C0}{$C_0$}
    \psfrag{s1}{$s_{1}$}
    \psfrag{s2}{$s_{2}$}
    \psfrag{t1}{$t_1$}
    \psfrag{t2}{$t_{2}$}
    \includegraphics[scale=0.5]{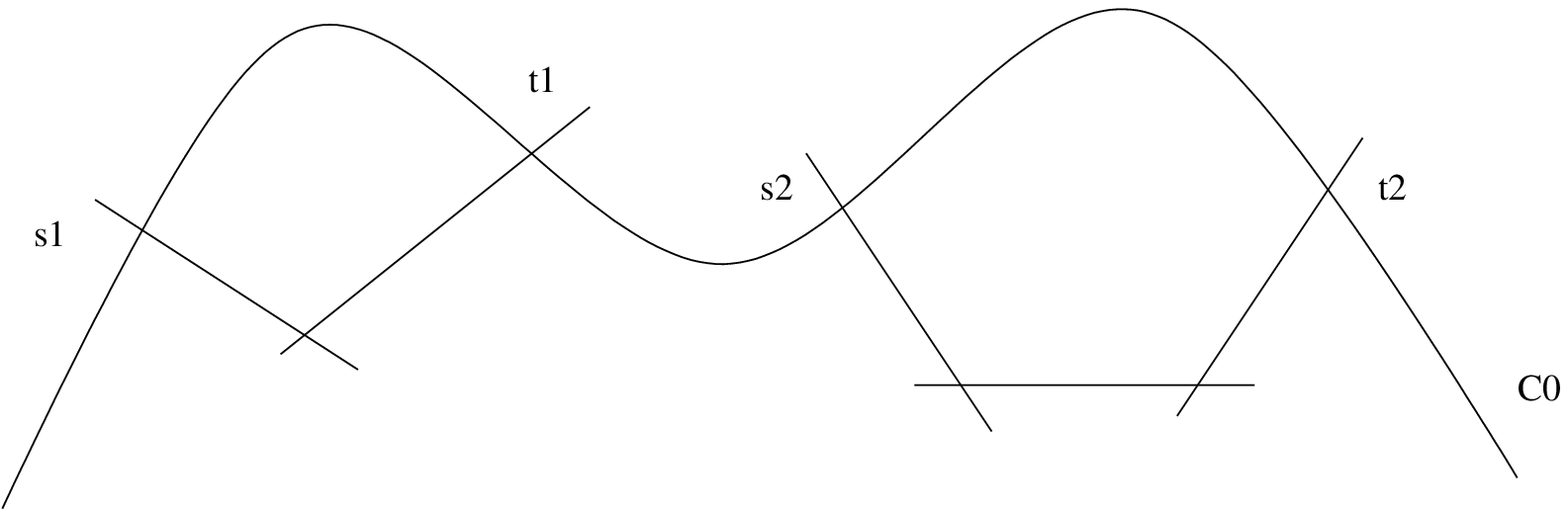}
    \end{figure}
    
Let $B_0$ be the image $\phi (C_0)$ in $B$. Call an irreducible component $B_1$ of $B\backslash B_0$ a tail if its removal does not disconnect $B$. We claim that a tail must intersect $B_0$. 
Suppose $B_1$ is a tail that does not meet $B_0$. Let $C_1$ denote the pre-image $\phi^{-1}(B_1)$. 
By the stability of $B$, $B_1$ contains two marked points and $\phi$ restricted to $C_1$ is a double cover of $B_1$ branched at these two points. So $C_1$ is an irreducible component of a chain of rational curves in $C$.  Let $r$ be the node $B_1\cap \overline{B\backslash B_1}$ and $B_r$ be the unique irreducible component of $B$ that meets $B_1$ at $r$. 
Let $p, q$ be the pre-image of $r$ under $\phi$. Note that $p, q$ are not in $C_0$. By the description of $C$, there exist two different components $C_p$ and $C_q$ of $C$ that meet $C_1$ at $p$ and $q$, respectively. Since $r$ is not a branch point, $\phi$ maps $C_p$ and $C_q$ both to $B_r$ isomorphically. Then $B_r$ does not contain any marked point, which contradicts  the stability of $B$. The following picture illustrates the idea of this argument: 
\begin{figure}[H]
    \centering
    \psfrag{C1}{$C_1$}
    \psfrag{C2}{$C_{p}$}
    \psfrag{C3}{$C_{q}$}
    \psfrag{C0}{$C_0$}
    \psfrag{p}{$p$}
    \psfrag{q}{$q$}
    \psfrag{r}{$r$}
    \psfrag{B0}{$B_0$}
    \psfrag{B1}{$B_1$}
    \psfrag{B2}{$B_r$}
    \psfrag{hi}{$\phi$}
    \includegraphics[scale=0.5]{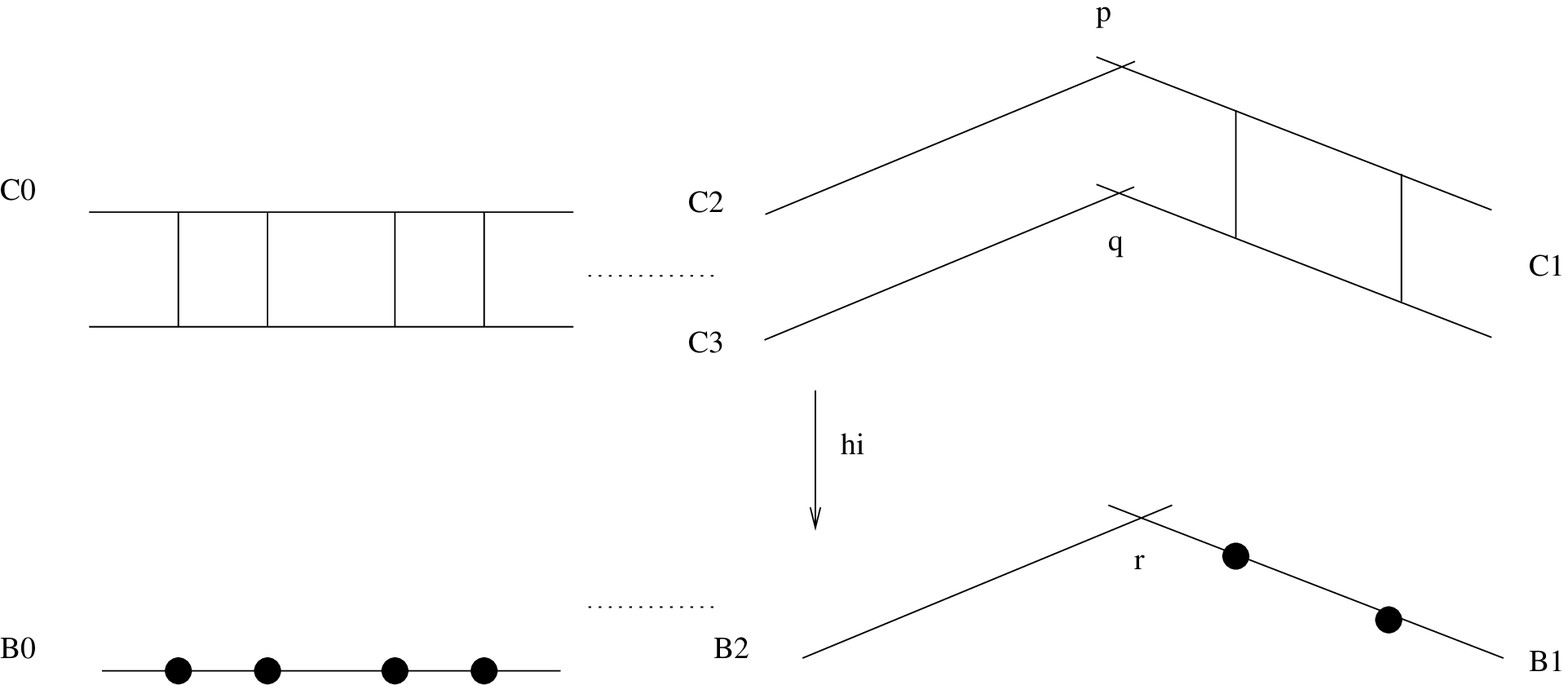}
    \end{figure}

Therefore, $B$ consists of the main component $B_0$ with all rational tails intersecting $B_0$ such that each tail contains two marked points. The pre-image of a tail under $\phi$ is an irreducible rational component of $C$ intersecting $C_0$ at a pair of points $s_i, t_i$. An example of such admissible covers $\phi$ is as follows: 
\begin{figure}[H]
    \centering
    \psfrag{C0}{$C_0$}
    \psfrag{s1}{$s_1$}
    \psfrag{s2}{$s_2$}
    \psfrag{t1}{$t_1$}
    \psfrag{t2}{$t_2$}
    \psfrag{B0}{$B_0$}
    \psfrag{hi}{$\phi$}
    \includegraphics[scale=0.5]{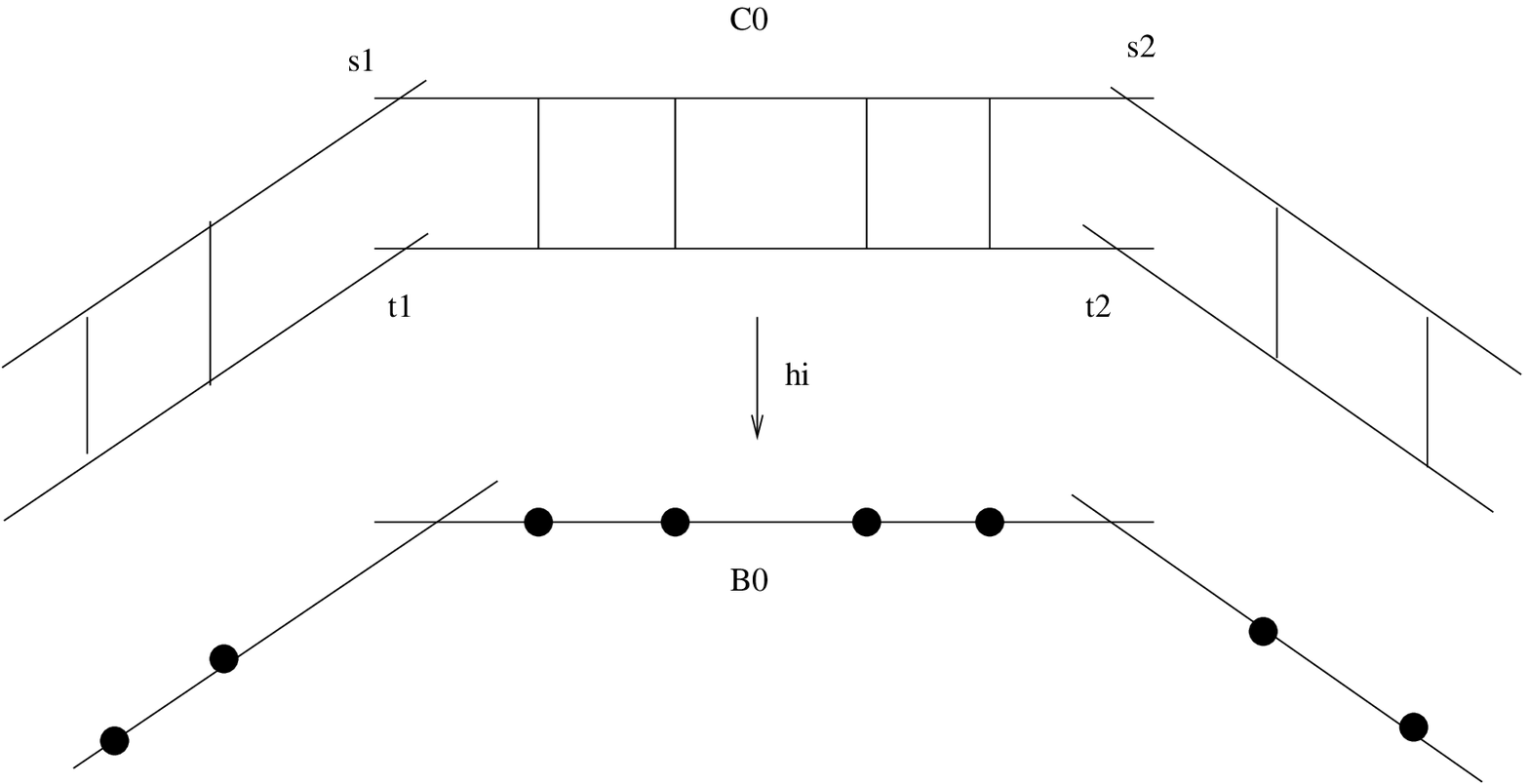}
    \end{figure}

Note that removing any node of $B$, the resulting two connected components have 2 and $2g$ marked points, respectively. Hence, $B$ as a $(2g+2)$-pointed rational curve is not parameterized in the boundary component $D_k$ of $\Mogs$ for any $k > 2$. Then the intersection $h(\Ti^{hyp}) \ldotp D_k$ is zero for any $k > 2$. Keel and McKernan [KM] showed that the effective cone of $\Mogs$ is generated by $D_2, \ldots, D_{g+1}$. The pseudo-effective cone of a projective variety is dual to its cone of moving curves with respect to the natural intersection paring \cite{BDPP}. So the numerical class of $h(\Ti^{hyp})$ spans an extremal ray of the cone of moving curves on $\Mogs$, which is dual to the face $\langle D_2, \ldots, D_{g+1}\rangle$ 
of the effective cone. 
\end{proof}

\begin{remark}
\label{movingrigid}
Take a $(2g+2)$-pointed smooth rational curve and vary a marked point. We get a curve in $\Mog$ whose image $C$ in $\Mogs$ does not intersect any boundary component $D_k$ for $k > 2$. 
So $C$ and the above $h(\Ti^{hyp})$ span the same extremal ray of the cone of moving curves on $\Mogs$. It is surprising in that $\Ti^{hyp}$ is rigid but the deformations of $C$ cover
an open subset of $\Mogs$. Nevertheless, this ray can be expressed as a non-negative linear combination of vital curves. Unfortunately (or fortunately, depending on your attitude) these rigid curves do not provide counterexamples for Fulton's conjecture. Note that the Teichm\"{u}ller curves on $\Mon$ are different from those constructed in \cite{CT} using the exceptional loci of birational contractions of $\Mon$. The reason is because such Teichm\"{u}ller curves and the moving curve $C$ span the same ray, any contraction that blows down a Teichm\"{u}ller curve
must contract $\Mon$ to a lower dimensional target. In particular, it cannot be a birational contraction. 
\end{remark}

The descending property of the rigidity of $\Ti^{hyp}$ and the value of their slopes for $\mu = (2g-2)$ or $(g-1,g-1)$ follow as direct consequences of Theorem~\ref{extremal}. 

\begin{proof}[Proof of Corollary~\ref{descend}]
Let $n=2g+2$. By Theorem~\ref{extremal}, we know $h(\Ti^{hyp})$ does not meet the boundary components $D_j$ for any $j > 2$ in $\Mons$ for even $n\geq 6$. Their pre-images $f^{-1}(h(\Ti^{hyp}))$ in $\Mon$ do not meet the boundary components $D_{J}$ for any $|J| > 2$. Since the morphism $\rho_i: \Mon\rightarrow \overline{M}_{0,\mathcal A(i)}$ only contracts the boundary components $D_{J}$ for $2<|J|\leq i$, the restriction of $\rho_i$ to a local neighborhood of  $f^{-1}(h(\Ti^{hyp}))$ is an isomorphism. Hence, if $f^{-1}(\Ti^{hyp})$ is rigid on $\Mon$, its projection to $\overline{M}_{0,\mathcal A(i)}$ is also rigid. Corollary~\ref{descend} now follows from the combination of Theorems~\ref{dense} and \ref{rigid}. 
\end{proof}

\begin{proof}[Proof of Corollary~\ref{slope}] 
The inclusion $\iota: \Hg\hookrightarrow \Mg$ induces a pull-back map
$$ \iota^{*}: \mbox{Pic}(\Mg)\otimes \mathbb Q \rightarrow \mbox{Pic}(\Hg) \otimes \mathbb Q, $$ 
such that $\iota^{*} (\Delta_0) = 2\sum D_{2i}$ and $\iota^{*}(\Delta_i) = D_{2i+1}/2$ for $i > 0$ \cite[6.C]{HM2}. 
Moreover, we have 
$$ \iota^{*}(\lambda) = \sum_{i=0}^{[(g-1)/2]}\frac{(i+1)(g-i)}{4g+2} D_{2i+2} + \sum_{i = 1}^{[g/2]} \frac{i(g-i)}{4g+2} D_{2i+1}. $$
For $\mu = (2g-2)$ or $(g-1,g-1)$, $\Ti^{hyp}$ maps to $\Hg$ and its image does not intersect $D_k$ for any $k > 2$. Then it has slope 
$$ \frac{h(\Ti^{hyp})\ldotp \delta}{h(\Ti^{hyp})\ldotp \lambda} =  2 \cdot \frac{h(\Ti^{hyp})\ldotp D_2}{h(\Ti^{hyp})\ldotp \iota^{*}(\lambda)} = 2 \cdot \frac{4g+2}{g} = 8 + \frac{4}{g}. $$  
\end{proof}

\begin{remark}
There is a general formula \cite[Theorem 1.15]{C} to compute the slope of such 1-dimensional Hurwitz spaces of covers. But the combinatorics involved in the formula is so 
complicated that the author was only able to calculate the slope for $g = 2, 3$. Here the result implies that for a hyperelliptic component $\Ti^{hyp}$ for $\mu = (2g-2)$ or $(g-1,g-1)$,
we know its slope equal to $8+\frac{4}{g}$ without doing any calculation! Nevertheless, see Example~\ref{53} for a reducible Hurwitz space whose hyperelliptic components have slope $8+\frac{4}{g}$, while the others have a different slope. 
\end{remark}

\section{Slopes, Lyapunov exponents and Siegel-Veech constants}
In this section, we will prove Theorem~\ref{slc}. 

Let $a, b, c$ denote a standard basis of $\pi_1 (E, q)$ for a torus $E$ punctured  at $q$, 
where they satisfy $b^{-1}a^{-1}ba = c$. 
\begin{figure}[H]
    \centering
    \psfrag{E}{$E$}
    \psfrag{q}{$q$}
    \psfrag{a}{$a$}
    \psfrag{b}{$b$}
    \psfrag{c}{$c$}
    \includegraphics[scale=0.5]{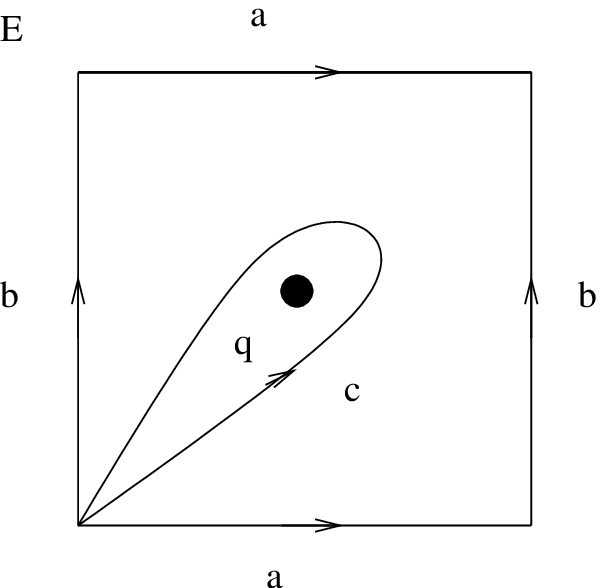}
    \end{figure}

Consider degree $d$ connected covers $\pi$ of $E$ with a unique branch point $q$ and 
$$\pi^{-1}(q) = (m_1+1)p_1 + \cdots + (m_k+1)p_k + p_{k+1} + \cdots + p_l.$$ 
Such a cover corresponds to an element in $ \mbox{Hom} (\pi_1 (E, q), S_d), $
where $S_d$ is the permutation group on $d$ letters, such that the images $\alpha, \beta, \gamma$ of $a, b, c$ satisfy 
$$\AB = \gamma \in (m_1+1)\cdots(m_k+1)(1)\cdots (1).$$ 
The notation $(m_1+1)\cdots(m_k+1)(1)\cdots (1)$ stands for the conjugacy class of $S_d$ that consists of cycles $(i)$ of length $i$. 
The cover is connected if and only if the subgroup generated by $\alpha, \beta$ acts transitively on the $d$ letters. 

Define an equivalence relation $\sim$ between two pairs $(\alpha, \beta)$ and $(\alpha', \beta')$ 
if there exists an element $\tau \in S_d$ such that $\tau (\alpha, \beta) \tau^{-1} = (\alpha',\beta')$. 
Two covers are called isomorphic if and only if there is a commutative diagram as follows: 
$$\xymatrix{
C \ar[rr]^\phi \ar[dr] & & C' \ar[dl] \\
& E  &  }$$
where $\phi$ is an isomorphism between $C$ and $C'$. 
For two isomorphic covers, their monodromy images $(\alpha, \beta)$ and $(\alpha', \beta')$ are equivalent to each other, 
since the conjugate action of $\tau$ amounts to relabeling the $d$ sheets of the cover. 

Hence, non-isomorphic degree $d$, genus $g$ connected covers of a fixed $E$ can be parameterized by the following covering set $Cov_{d,\mu}$ 
of the equivalence classes:  
$$ Cov_{d,\mu} = \{ (\alpha, \beta) \in S_d \times S_d \ |\ \AB \in (m_1+1)\cdots(m_k+1)(1)\cdots (1), \  \langle \alpha, \beta \rangle \ \mbox{is transitive}\}/\sim. $$
Varying the $j$-invariant of the elliptic curve, we obtain the 1-dimensional Hurwitz space $\T$ parameterizing such covers. 
The fiber of the finite map $e: \T\rightarrow \overline{\mathcal M}_{1,1}$ over a fixed elliptic curve
can be identified with the equivalence classes
of pairs in $Cov_{d,\mu}$. The degree $N_{d,\mu}$ of $e$ counts the number of non-isomorphic covers, hence we have 
$$ N_{d,\mu} = | Cov_{d,\mu} |. $$ 

The Teichm\"{u}ller curves $\T$ are contained in $\HH(\mu)$ as 1-dimensional SL($2,\mathbb R$) invariant orbits. If $\HH(\mu)$ has more than one connected component due to hyperelliptic, odd or even spin structures, the covering set will have a corresponding decomposition. In this case, we add subscripts $hyp$, $odd$ or $even$ to distinguish them.  

For each equivalence class of monodromy pairs $(\alpha,\beta)$ in $Cov_{d,\mu}$, suppose $\alpha$ has $a_i$ cycles of length $i$, where 
$\sum_{i=1}^{d} ia_{i} = d$. Associate to $(\alpha,\beta)$ the following weight 
$$\sum_{i=1}^{d}\frac{a_i}{i}.$$
This weight is well-defined up to equivalence, since permutations in the same conjugacy class have the same number of cycles of length $i$. 
Let $M_{d,\mu}$ denote the summation of such weights, ranging 
over the $N_{d,\mu}$ equivalence classes of monodromy pairs in $Cov_{d,\mu}$. By \cite[Theorem 1.15]{C},
the slope of $\T$ is determined by the quotient of $M_{d,\mu}$ and $N_{d,\mu}$ as follows: 
$$ s(\T) = \frac{12}{1+\kappa_{g}(\mu)N_{d,\mu}/M_{d,\mu}}, $$
where $\kappa_g(\mu) = \frac{1}{12}(2g-2+\sum_{i=1}^{k}\frac{m_i}{m_i+1})$ is a constant up to $\mu$. 
The same formula applies to $\T^{hyp}$, $\T^{odd}$ or $\T^{even}$ in case $\HH(\mu)$ has more than one connected components, and 
the summations $M, N$ are taken over the corresponding subsets of the decomposition of $Cov_{d,\mu}$. 

For the reader's convenience, we briefly explain the idea behind the slope formula. When an elliptic curve $E$ degenerates 
to a rational nodal curve, locally there is a vanishing cycle $a \in H_1 (E, \mathbb Z)$ that shrinks to the node. Let $\alpha$ be the monodromy image of $a$ 
in $S_d$ for a smooth cover in $\T$. If $\alpha$ has a cycle of length $i$, around the resulting node of the singular cover over the rational nodal curve, 
the map is locally given by $(x,y) \rightarrow (x^i, y^i)$. Moreover, one has to make a degree $i$ base change to realize a universal covering map over 
$\T\rightarrow \Mone$ around this node. From an orbifold point of view, such a node contributes $\frac{1}{i}$ to the intersection $\T\ldotp \delta$, hence
the weighted sum $M_{d,\mu}$ counts the total intersection $\T\ldotp \delta$. For $\T\ldotp \lambda$, it boils down to a standard calculation 
on the universal covering map between two families of curves over $\T$ and $\Mone$. 

To connect the slope of $\T$ with the Siegel-Veech area constant and the sum of Lyapunov exponents, we need to interpret covers of elliptic curves with a unique branch point 
as square-tiled surfaces (or origamis). Take $d$ unit squares and label them by $1,\ldots, d$. Glue their edges by a monodromy pair $(\alpha,\beta)$ and we obtain a square-tiled surface, which is the corresponding cover with respect to $(\alpha,\beta)$. See section 4 for a more detailed description and examples. The ramification points of the cover are called zeros of the square-tiled surface, which refer to zeros of the corresponding Abelian differential. Such a Riemann surface $S$ has a natural metric by pulling back $dz$. So we can talk about its geodesics with a fixed direction. A saddle connection on $S$ is a geodesic connecting two zeros. Geodesics with the same direction may fill in a maximal cylindrical area on $S$, which is bounded by saddle connections. The key observation is that the weighted sum $M_{d,\mu}$ also counts the number of maximal horizontal cylinders of height $1$ with weight $1/l$ on $S$, where $l$ is the length of a cylinder. 

\begin{proof}[Proof of Theorem~\ref{slc}]
For a smooth cover corresponding to the monodromy pair $(\alpha, \beta)$ in $Cov_{d,\mu}$, we can glue $d$ unit squares by the monodromy actions $(\alpha,\beta)$, as we will do in the examples in section 4. Let $\alpha$ correspond to the monodromy image of the horizontal edge of the torus in $S_d$. If $\alpha$ has a cycle of length $i$, say $(a_1\cdots a_i)$, 
we line up $i$ unit squares horizontally into a rectangle of length $i$ and height $1$ with two vertical edges glued together. On the resulting square-tiled surface, it corresponds to a cylinder filled in by maximal horizontal geodesics of length-$i$. 
 \begin{figure}[H]
    \centering
    \psfrag{1}{$a_1$}
    \psfrag{2}{$a_2$}
    \psfrag{j-1}{$a_{i-1}$}
    \psfrag{j}{$a_i$}
    \includegraphics[scale=0.6]{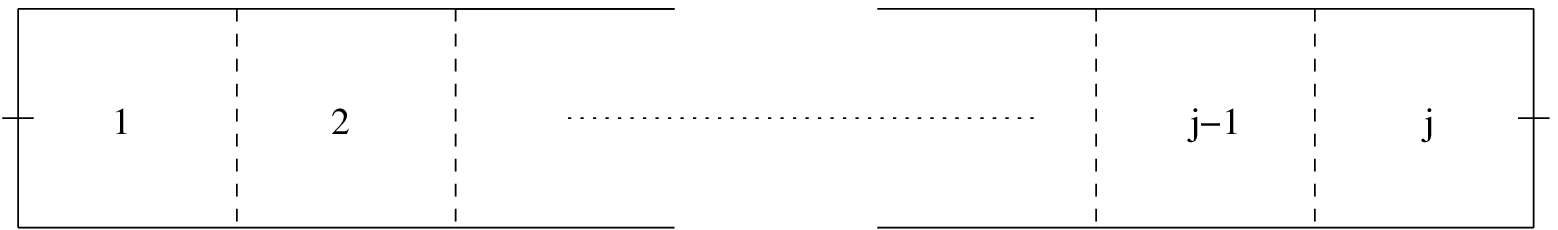}
    \end{figure}
Therefore, the weighted sum $M_{d,\mu}$ equals the total number of maximal horizontal cylinders of height $1$ with weight $1/\mbox{length}$ for all square-tiled surfaces parameterized by
$Cov_{d,\mu}$. By appendix A, the Siegel-Veech area constant $c_{\mu}$ of a stratum $\HH(\mu)$ (or one of its connected components) equals the following limit: 
$$c_{\mu} = \lim\limits_{d\to \infty} \frac{M_{d,\mu}}{N_{d,\mu}}.$$ 
Note that the summation in \cite{EKZ} for $M_{d,\mu}$ has weight height/length for a horizontal cylinder. In our setting, the square-tiled surfaces are arranged so that they only have height $1$ horizontal cylinders, hence it does make a difference if the weight is multiplied by the height of a cylinder. Combining the relation 
$$L_{\mu} = \kappa_{\mu} + c_{\mu}$$
in \cite{EKZ} with the slope formula, we have 
$$ s_{\mu} = \lim\limits_{d\rightarrow \infty}s(\T) = \frac{12c_{\mu}}{L_{\mu}}. $$
\end{proof}

The equality $c_{\mu} = \lim\limits_{d\to \infty} M_{d,\mu}/N_{d,\mu}$ used in the proof was informed to the author by Eskin. For the reader's convenience, we include a proof
in appendix A.   

\begin{remark}
We give an intuitive explanation of Theorem~\ref{slc}. By the period map, the tangent space of $\HH(\mu)$ 
can be identified with the relative cohomology group $H^1(C,p_1,\ldots,p_k,\CC)$.  
The SL$(2,\RR)$ action induces an invariant splitting 
$H^1(C,p_1,\ldots,p_k,\CC) = L_1 \oplus L_2$, 
where $L_1$ is the direction of the SL$(2,\RR)$ orbit and $L_2$ is the orthogonal complement with respect to the Hodge inner product.
Then $L_2$ supports an invariant differential form $\beta$ in $H^{2n-2}(\HH_1(\mu), \RR)$, where $\HH_1(\mu)$ is the quotient  $\HH(\mu) / \CC^{*}$ by scaling the 1-forms 
and $n = 2g-2 + k$ is the complex dimension of $\HH_1(\mu)$. Let $\gamma$ be the first Chern class of the holomorphic $ \CC^{*}$ bundle $\HH(\mu)\rightarrow \HH_1(\mu)$. 
In \cite[Section 7]{K}, there is a formula to compute the sum of Lyapunov exponents as follows: 
$$ L_{\mu} = \frac{\int_{\HH_1(\mu)}\beta\wedge\lambda}{\int_{\HH_1(\mu)}\beta\wedge\gamma}.$$ 
The Poincare dual of $\beta$ can be regarded as the limit of $\T$ as $d \rightarrow \infty$. The bottom integral is about the volume of $\HH_1(\mu)$, which can be calculated 
using the limit of $N_{d,\mu}$ \cite{EO}. Therefore, the above formula can be interpreted as 
$$ L_{\mu} = \frac{\lim\limits_{d\to \infty}\T\ldotp \lambda}{\lim\limits_{d\to \infty}N_{d,\mu}} = \frac{\lim\limits_{d\to \infty}\T\ldotp\lambda/(\frac{1}{12}\T\ldotp\delta)}{\lim\limits_{d\to \infty}N_{d,\mu}/M_{d,\mu}} = \frac{12\lim\limits_{d\to \infty}1/s(\T)}{1/c_{\mu}} = \frac{12c_{\mu}}{s_{\mu}}. $$
The coefficient 12 appears, because in the setting of the slope formula \cite[Theorem 1.15]{C}, one has to make a degree 12 base change to realize a universal covering map, e.g. take a general pencil of plane cubics. Then $12M_{d,\mu}$ counts the intersection with $\delta$ after the base change, since there are 12 rational nodal curves in the pencil. 
The reader can also refer to \cite{BM} for a related discussion of Lyapunov exponents of Teichm\"{u}ller curves by local systems. 
\end{remark}

We can calculate $c$ and $L$ explicitly for the hyperelliptic components $\HH^{hyp}(2g-2)$ and $\HH^{hyp}(g-1,g-1)$. 

\begin{proof}[Proof of Corollary~\ref{slc-hyp}]
In Corollary~\ref{slope}, for those Teichm\"{u}ller curves $\T^{hyp}$ in $\HH^{hyp}(\mu)$ for $\mu = (2g-2)$ or $(g-1,g-1)$, 
their slopes are equal to $8+\frac{4}{g}$, independent of $d$. 

For $\mu = (2g-2)$, we have
$$\kappa =  \frac{(g-1)g}{3(2g-1)}. $$
By Theorem~\ref{slc}, 
we get
$$L = \frac{12\kappa}{12-s} = \frac{g^2}{2g-1}, \  c = L - \kappa =  \frac{g(2g+1)}{3(2g-1)}. $$

For $\mu = (g-1,g-1)$, we have 
$$\kappa =\frac{(g-1)(g+1)}{6g}. $$
By Theorem~\ref{slc}, 
we get 
$$ L = \frac{12\kappa}{12-s} = \frac{g+1}{2}, \   c = L - \kappa =  \frac{(g+1)(2g+1)}{6g}. $$
\end{proof}

These numbers are also calculated in \cite{EKZ} using the determinant of the Laplacian. 

The Teichm\"{u}ller curves $\T$ were studied in \cite{C} for the purpose of bounding slopes of effective divisors on $\Mg$. For an effective divisor $D = a\lambda - b\delta $ on $\Mg$, 
define its slope 
$$s(D) = \frac{a}{b}.$$ 
If an effective curve $C$ on $\Mg$ is not contained in $D$, then $D\ldotp C\geq 0$, hence $s(D) \geq s(C)$. Fix a partition $\mu = (m_1,\ldots,m_k)$ of $2g-2$ such that $k \geq g-1$. By \cite[Theorem 1.21]{C}, the map $\HH(\mu)\rightarrow \Mg$ is dominant and the images of $\T$ cannot be all contained in $D$, since the square-tiled surfaces are integer points in $\HH(\mu)$. Hence, the limit of slopes of $\T$ as $d\to\infty$ can provide a lower bound for slopes of effective divisors on 
$\Mg$. 

There are two difficulties in this method of bounding slopes. First, the Teichm\"{u}ller curves $\T$ may be reducible. It is logically possible that a divisor $D$ contains some components of $\T$ but not all, for infinitely many $d$. In principle we also need to compute the limit of slopes of irreducible components $\Ti$ of $\T$ as $d\to\infty$. Second, we do not have an effective way to evaluate $s$, $c$ or $L$ for large $g$. Nevertheless, it is conjectured that for ``generic'' $\Ti$, the quotient $M_{d,\mu,i}/N_{d,\mu,i}$ goes to $c_{\mu}$, i.e. the limit of $s(\Ti)$ should be the same as $s$, where ``generic'' essentially means that $\Ti$ is not contained in any other closed SL$(2,\RR)$ invariant submanifold of $\HH(\mu)$. Moreover, Eskin and Zorich obtained strong numerical evidence, which predicts 
$$\lim\limits_{g\to\infty} c_{\mu}  = 2$$ for the non-hyperelliptic strata $\HH(\mu)$ (or their connected components). It is necessary to rule out hyperelliptic strata, since 
$c_{\mu}^{hyp} $ grows asymptotically as $g/3$ for $\mu = (2g-2)$ or $(g-1,g-1)$ by Corollary~\ref{slc-hyp}. 

Given the above expectations, by Theorem~\ref{slc}, an heuristic lower bound for slopes of effective divisors can be arbitrarily close to 
$$ s = \frac{12}{1+\frac{1}{2}\kappa} = \frac{288}{2g+22+\displaystyle\sum_{i=1}^k \frac{m_i}{m_i+1}} $$
for large $g$, where $(m_1,\ldots, m_k)$ is a partition of $2g-2$ and $k \geq g-1$. 
The partition $(1,\ldots, 1, g)$ maximizes the above bound and we have $s \sim 576/5g$ as $g\to \infty$ for this partition. An interesting feather is that this asymptotic bound
$576/5g$ also appeared in \cite{HM1} based on an heuristic analysis of monodromy data, where Harris and Morrison studied 1-dimensioanl families $Z_{k,g}$ of degree $k$, genus $g$ simply branched covers of a rational curve by varying a branch point. Another lower bound $60/(g+4)$ was established by Pandharipande \cite{P} using Hodge integrals. To the author's best knowledge, on the one hand, there is no known effective divisor on $\Mg$ with slope $\leq 6$. On the other hand, there is no known lower bound better than $O(1/g)$ for slopes of effective divisors as $g$ goes to infinity. The range $[O(1/g), 6]$ has been mysterious for a long time. The reader can refer to \cite{Mo} for a nice survey on this problem and its recent development. 
In any case, we come up with the following conjecture, which predicts that $O(1/g)$ is more likely to be the sharp lower bound for slopes of effective divisors. 

\begin{conjecture}
There exist effective divisors on $\Mg$ with slopes arbitrarily close to $576/5g$ as $g$ goes to infinity. 
\end{conjecture}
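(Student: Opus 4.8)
The plan is to produce the sought divisors as duals, under the pseudo-effective/moving-curves duality of \cite{BDPP}, of the limiting numerical class of the arithmetic Teichm\"uller curves $\T$ for the extremal partition $\mu = (1,\ldots,1,g)$ that produced the value $576/5g$. Since this $\mu$ has $k = g-1 \geq g-1$ parts, \cite[Theorem 1.21]{C} guarantees that $\HH(\mu)\to\Mg$ is dominant, so by the mechanism of Theorem~\ref{dense} the union of the cycles $h(\T)$ is Zariski dense and no effective divisor $D = a\lambda - b\delta$ can contain all of them; for each such $D$ and each $\T$ not lying in $D$ one gets $s(D)\geq s(\T)$, whose limiting value $s_\mu$ is therefore a lower barrier for slopes of effective divisors. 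The content of the conjecture is the reverse direction: that this barrier is asymptotically \emph{achieved}, i.e.\ that the slope-minimizing face of $\Eff(\Mg)$ meets the $576/5g$ ray from above.

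First I would make the limiting class precise. Following the remark after Theorem~\ref{slc}, the normalized classes $[\T]/N_{d,\mu}$ should converge as $d\to\infty$ to the Poincar\'e dual of the Siegel--Veech form $\beta$ on $\HH_1(\mu)$; call the resulting class $z_\mu\in N_1(\Mg)_\RR$. The decisive structural point is that, although each individual $\T$ is rigid by Theorem~\ref{rigid}, the limit $z_\mu$ is a \emph{moving} class: the density of Theorem~\ref{dense} together with the equidistribution of the $\mathrm{SL}(2,\RR)$-orbits in $\HH(\mu)$ forces the cycles $h(\T)$ to sweep out $\Mg$ in the limit, so $z_\mu$ lies in the interior of the cone of moving curves. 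By Theorem~\ref{slc} its slope is $s_\mu = 12c_\mu/L_\mu$, which tends to $576/5g$ precisely under the Eskin--Zorich prediction $c_\mu\to 2$ for these non-hyperelliptic strata. The local-systems viewpoint of \cite{BM} would be the natural tool for controlling the class $z_\mu$ and its intersections with $\lambda$ and $\delta$.

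Granting this, I would invoke \cite{BDPP}: because $\Eff(\Mg)$ is dual to the cone of moving curves, the supporting hyperplane of the moving cone along the ray spanned by the $(\lambda,\delta)$-projection of $z_\mu$ cuts out a boundary face of $\Eff(\Mg)$, and the class on that face of minimal $(\lambda,\delta)$-slope has slope exactly $s_\mu$. This already exhibits a \emph{pseudo-effective} $\QQ$-divisor class with slope tending to $576/5g$; the remaining and essential task is to promote it to a genuinely effective divisor with slope within $\varepsilon$ of that value, or else to construct such an effective divisor directly for the partition $(1,\ldots,1,g)$.

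The hard part will be twofold. The first input, the Eskin--Zorich asymptotic $\lim_{g\to\infty}c_\mu = 2$, is at present supported only by the numerical evidence recorded in appendix B, and a rigorous proof requires controlling $M_{d,\mu}/N_{d,\mu}$ uniformly as both $d$ and $g$ grow; combined with the reducibility of $\T$ flagged after the statement, one must moreover restrict to \emph{generic} components $\Ti$, not contained in any smaller closed $\mathrm{SL}(2,\RR)$-invariant submanifold, so that $M_{d,\mu,i}/N_{d,\mu,i}\to c_\mu$ and $z_\mu$ retains the correct slope. But the true obstacle is the effectivity step: every presently known effective divisor on $\Mg$ (Brill--Noether, Koszul, and the Hodge-integral bounds surveyed in \cite{Mo}, together with Pandharipande's estimate \cite{P}) has slope bounded away from zero, around $6$, whereas the conjecture demands slope $O(1/g)$. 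Thus the divisors in question are genuinely new, the limit defining $\Eff(\Mg)$ does not preserve effectivity, and establishing that the extremal pseudo-effective class is represented by an honest effective $\QQ$-divisor — rather than merely a boundary point of the cone — is the deepest point and will likely require a construction tailored to the partition $(1,\ldots,1,g)$ going well beyond the pure duality argument.
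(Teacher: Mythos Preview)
The statement is a \emph{conjecture}, and the paper does not prove it; it offers only what it explicitly labels ``an heuristic argument.'' So there is no proof to compare your proposal against, only a heuristic, and your proposal is itself a heuristic of essentially the same shape: take the limiting curve class coming from $\T$ with $\mu=(1,\ldots,1,g)$, argue it lies on (or near) an extremal ray of the moving cone, and invoke the BDPP duality \cite{BDPP} to produce a pseudo-effective divisor of the desired slope on the dual face. You correctly identify the two genuine obstructions---the conjectural status of $c_\mu\to 2$ and the gap between pseudo-effective and effective---and you are right that neither can currently be closed.

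Two differences are worth noting. First, the paper's heuristic leans on an additional ingredient you do not use: the Harris--Morrison moving curves $Z_{k,g}$ from \cite{HM1}, whose slopes are also expected to grow like $576/5g$. Since the $Z_{k,g}$ are genuinely moving, their limiting ray automatically sits in the moving cone, whereas your route to the same conclusion---arguing that the limit $z_\mu$ of the rigid curves $\T$ is moving because of equidistribution and density---is more delicate and not obviously rigorous (a Zariski-dense union of rigid curves need not have a moving limit class). The paper instead observes that \emph{both} the rigid $\T$ and the moving $Z_{k,g}$ appear to converge to the same ray $R$, which is stronger circumstantial evidence for extremality. Second, the paper records one structural fact you omit: by \cite[Proposition 3.1]{C} each $\T$ has $\T\cdot\delta_i=0$ for $i>0$, so the limiting ray $R$ satisfies $R\cdot\delta_i=0$ and the expected dual face of the pseudo-effective cone is spanned by $\delta_1,\ldots,\delta_{[g/2]}$ together with the sought divisor $D$. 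This pins down where in $\mathrm{Pic}(\Mg)$ the conjectural divisor should live.
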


The author has not been able to find further evidence for this conjecture except the following reasoning. 

\noindent {\bf An heuristic argument.} Assume that $576/5g$ is the limit of slopes of $\T$ for $\mu = (1,\ldots, 1, g)$ as $d,g\to \infty$, and that the slopes of moving curves $Z_{k,g}$ in \cite{HM1} grow like $576/5g$ as expected for $g\gg 0$ and $k = [(g+3)/2]$. Let $R$ be a curve class in $N_1 (\Mg) \otimes \QQ$ such that $R\ldotp \delta_i = 0$ for $1\leq i \leq [g/2]$ 
and $s(R) = (R\ldotp \delta_0)/(R\ldotp\lambda) = 576/5g$. Since $\T$ does not meet $\delta_i$ for any $i > 0$ \cite[Proposition 3.1]{C} and $Z_{d,g}\ldotp \delta$ is dominated by $Z_{k,g}\ldotp \delta_0$ for large $g$, the rays spanned by the limit of $\T$ as $d\to\infty$ and by $Z_{k,g}$ should be close to $R$ as $g\to \infty$. Moreover, $Z_{k,g}$ is moving on $\Mg$ but $\T$ is rigid. It is natural to expect that their limits are close to an extremal ray $R'$ of the cone of moving curves on $\Mg$ for $g\gg 0$, which is dual to the pseudo-effective cone of divisors by \cite{BDPP}. Given such an expectation, the dual face of $R'$ in the pseudo-effective cone would be spanned by $\delta_1, \ldots, \delta_{[g/2]}$ along with a pseudo-effective divisor $D$ such that $D\ldotp R' = 0$. Since $R'$ is close to $R$, the slope of $D$ would be close to $s(R) = 576/5g.$ 

\begin{remark}
An extremal ray of the cone of moving curves on a projective variety may be generated by both a moving curve and a rigid curve. See Remark~\ref{movingrigid} for an example.  
Such examples also include rational curves $R$ on a quintic threefold $X$ with normal bundle $N \cong \OO(-1)\oplus \OO(-1)$. Since $h^{0}(f^{*}N) = 0$ for any finite morphism $f$ to $R$, 
the curve $R$ is called super rigid in the sense that any multiple of $R$ does not deform in $X$. Note that the Picard number of $X$ equals one, so $R$ is numerically equivalent to a moving curve class. \end{remark}

By Theorem~\ref{slc} and the Siegel-Veech constants $c_{\mu}$ calculated in \cite{EKZ}, in appendix B we list the limit $s_{\mu}$ of slopes of $\T$ in each stratum $\HH(\mu)$ for small genus. If an effective divisor $D$ on $\Mg$ has slope smaller than $s_{\mu}$, it must contain the images of infinitely many $\Ti$. This can induce various applications for understanding the geometry of $D$. For instance, the Brill-Noether divisor for $g=5$ has slope 8 and $s^{even}_{(8)} =  152064/18959 > 8$. Then this divisor contains infinitely many 
genus 5 elliptic coverings that admit a canonical divisor of form $(8p)$, where $p$ is the total ramification point and the Theta divisor $4p$ yields an even spin structure. 

There is a similar slope problem on $\mathcal A^{*}_g$, which is a partial compactification of the moduli space of $g$-dimensional principally polarized abelian 
varieties. The lower bound for slopes of effective divisors on $\mathcal A^{*}_g$ is known to approach zero as $g$ goes to infinity. In fact, there exists an effective divisor 
on $\mathcal A^{*}_g$ of slope at most 
$$\frac{(2\pi)^2}{\big(2(g!)\zeta(2g)\big)^{1/g}}.$$ 
Since $\lim \limits_{g\to \infty} \zeta(2g) = 1$ and $(g!)^{1/g} \sim g/e$, 
it is easy to check that the slope of this divisor is smaller than $576/5g$ for $g \gg 0$. See \cite{Gr} for a good introduction on this topic and further references. 

\begin{corollary}
Given $\lim\limits_{g\to\infty} c_{\mu}  = 2$ for $\mu = (1,\ldots, 1, g)$, a divisor on $\mathcal A^{*}_g$ defined by a modular form of slope smaller than $576/5g$ must contain the image of $\mathcal M_g$ via the Torelli embedding $\mathcal M_g \hookrightarrow \mathcal A_g$ for $g\gg 0$. 
\end{corollary}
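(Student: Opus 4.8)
The plan is to transport the statement from $\mathcal A^{*}_g$ back to $\Mg$ through the Torelli map and then apply the slope bound coming from the density and rigidity of the Teichm\"{u}ller curves $\T$ for the extremal partition $\mu = (1,\ldots,1,g)$ (with $g-2$ ones and one $g$, so $k = g-1$). For this $\mu$ the map $\HH(\mu)\to\mathcal M_g$ is dominant by \cite[Theorem 1.21]{C}, and the square-tiled surfaces are integer points in $\HH(\mu)$, so the union of the images $h(\Ti)$ over all $d,i$ is Zariski dense in $\mathcal M_g$. Suppose, toward a contradiction, that the modular-form divisor $D$ does not contain the Jacobian locus $t(\mathcal M_g)$. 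Then $D\cap t(\mathcal M_g)$ is a proper closed subset of $t(\mathcal M_g)\cong\mathcal M_g$, i.e. $D$ restricts to a genuine effective divisor on $\mathcal M_g$, and I can test it against the curves $t(h(\Ti))$.

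First I would record the numerical dictionary for the extended Torelli map $\bar t$ on the relevant loci: the Hodge classes agree, $\bar t^{*}\lambda = \lambda$, while the corank-one boundary of $\mathcal A^{*}_g$ pulls back to the non-separating boundary, $\bar t^{*}\mathcal D = \delta_0$; the remaining boundary divisors $\delta_i$ for $i>0$ map to products of lower-dimensional principally polarized abelian varieties, hence into the interior of $\mathcal A_g$, so they do not enter $\bar t^{*}\mathcal D$. Writing the class of $D$ as $a\lambda - b\mathcal D$ with $s(D) = a/b$, and using that each $h(\Ti)$ meets no $\delta_i$ with $i>0$ (so that $h(\Ti)\ldotp\delta_0 = h(\Ti)\ldotp\delta$), I obtain for every component not contained in $D$ the inequality $D\ldotp t(h(\Ti)) = a\,(\lambda\ldotp h(\Ti)) - b\,(\delta\ldotp h(\Ti)) \geq 0$, that is $s(D)\geq s(\Ti)$.

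Next I would feed in the density together with the limiting slope. Since $D\cap t(\mathcal M_g)$ is a proper closed subset while the $t(h(\Ti))$ are Zariski dense in $t(\mathcal M_g)$, for arbitrarily large $d$ some component $t(h(\Ti))$ escapes $D$, giving $s(D)\geq s(\Ti)$ for that component. Letting $d\to\infty$ and invoking Theorem~\ref{slc} under the hypothesis $\lim_{g\to\infty}c_{\mu}=2$, the slopes accumulate at $s_{\mu} = \frac{12}{1+\frac12\kappa_{\mu}} = \frac{288}{\,2g+22+\sum_i m_i/(m_i+1)\,}$, which for $\mu=(1,\ldots,1,g)$ is asymptotic to $576/5g$. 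Hence $s(D)\geq s_{\mu}$, and contrapositively a modular form of slope below this limiting value forces $t(\mathcal M_g)\subseteq D$, giving the claim for $g\gg 0$.

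I expect the main obstacle to be this last limiting-slope step, for two reasons. The Hurwitz space $\T$ is in general reducible, and a priori $D$ could contain some components $\Ti$ while missing others of strictly smaller slope; to conclude $s(D)\geq s_{\mu}$ one needs the limit of $s(\Ti)$ over the escaping components to be exactly $s_{\mu}$, which is precisely the ``generic'' equidistribution expectation $M_{d,\mu,i}/N_{d,\mu,i}\to c_{\mu}$ flagged after Theorem~\ref{slc}. The second delicate point is calibrating the honest threshold $s_{\mu}$ against the round value $576/5g$ in the statement: the two agree asymptotically ($g\cdot s_{\mu}\to 576/5$) but $s_{\mu}$ is slightly smaller for each finite $g$, so the exact boundary multiplicity of $\bar t^{*}\mathcal D$ along $\delta_0$ and the comparison of the two slope normalizations must be handled together, and I would pin this down from the local toroidal and period description of a one-nodal degeneration before asserting the final inequality.
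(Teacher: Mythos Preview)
Your approach is the paper's: if $D$ does not contain $t(\mathcal{M}_g)$, pull it back along the Torelli map to an effective divisor on $\Mg$ of the same slope and contradict the lower bound $\sim 576/5g$ furnished by the Teichm\"{u}ller curves for $\mu=(1,\ldots,1,g)$. The paper's own proof is three sentences and glosses over exactly the two points you flag---reducibility of $\T$ and the finite-$g$ gap between $s_\mu$ and $576/5g$---treating them as heuristic inputs already discussed before the corollary rather than resolving them, so your caution is well placed and your write-up is in fact more careful than the original.
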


\begin{proof}
We have seen that slopes of effective divisors on $\Mg$ are bounded by $576/5g$ from below for $g\gg 0$, 
assuming that $\lim\limits_{g\to\infty} c_{\mu}  = 2$ for $\mu = (1,\ldots, 1, g)$. If an effective divisor $D$ on $\mathcal A^{*}_g$ does not contain $\mathcal M_g$, its restriction 
to $\mathcal M_g$ induces an effective divisor on $\Mg$ with the same slope. Hence, the slope of $D$ cannot be smaller than $576/5g$ for $g\gg 0$. 
\end{proof}

\section{Square-tiled surfaces with a unique zero}
A cover of an elliptic curve with a unique branch point can be realized as a lattice polygon whose edges are glued with respect to 
the monodromy pair $(\alpha, \beta)$. Such a cover is called a square-tiled surface, which has been studied 
intensively from the viewpoint of dynamics, cf. \cite{HS} for an introduction and related references. There is a correspondence between 
square-tiled surfaces and the above monodromy pairs. Let $\pi: C\rightarrow E$ be a degree $d$ cover 
over the standard torus $E$ with a unique branch point. Take $d$ unit squares and mark them by $1, \ldots, d$. 
For the $i$-th unit square, mark its upper and lower horizontal edges by $b_i$ and $b_i'$, respectively. Similarly, mark its right and left 
vertical edges by $a_i$ and $a_i'$, respectively. Let $(\alpha, \beta)$ acting on the $d$ letters $\{1, \ldots, d \}$ 
be the monodromy pair corresponding to a degree $d$ cover $\pi: C\rightarrow E$ uniquely branched at 
the vertices of $E$. One can realize $C$ as a square-tiled surface of area $d$ by 
identifying $a_i$ with $a_{\alpha(i)}'$ and identifying $b_i$ with $b_{\beta(i)}'$ by parallel transport.
If $\pi$ has a unique ramification point, some vertices of the squares will be glued to form the unique zero of 
the square-tiled surface, which corresponds to the zero of the Abelian differential by pulling back $dz$. 

Consider the partition $\mu = (2g-2)$. The covering set is reduced to 
$$ Cov_{d,(2g-2)} = \{ (\alpha, \beta) \in S_d \times S_d \ |\ \AB \in (2g-1)(1)\cdots (1), \  \langle \alpha, \beta \rangle \ \mbox{is transitive}\}/\sim. $$
By the description in section 3, the fiber of the finite map $e: \mathcal T_{d,(2g-2)}\rightarrow \mathcal M_{1,1}$ can be identified with the equivalence classes
of pairs in $Cov_{d,(2g-2)}$. The degree $N_{d,(2g-2)}$ of $e$ counts the number of non-isomorphic covers, hence we have 
$$ N_{d,(2g-2)} = | Cov_{d,(2g-2)}  |. $$ 

\begin{example}
\label{five}
Let $\alpha = (1234)(5)$ and $\beta = (15)(2)(3)(4)$ be two permutations on five letters. We have 
$\AB = (154)(2)(3)$, so this monodromy pair yields a degree 5, genus 2 cover of a torus with a unique ramification point. The corresponding square-tiled 
surface looks like the following: 
\begin{figure}[H]
    \centering
    \psfrag{1}{$1$}
    \psfrag{2}{$2$}
    \psfrag{3}{$3$}
    \psfrag{4}{$4$}
    \psfrag{5}{$5$}
    \includegraphics[scale=0.6]{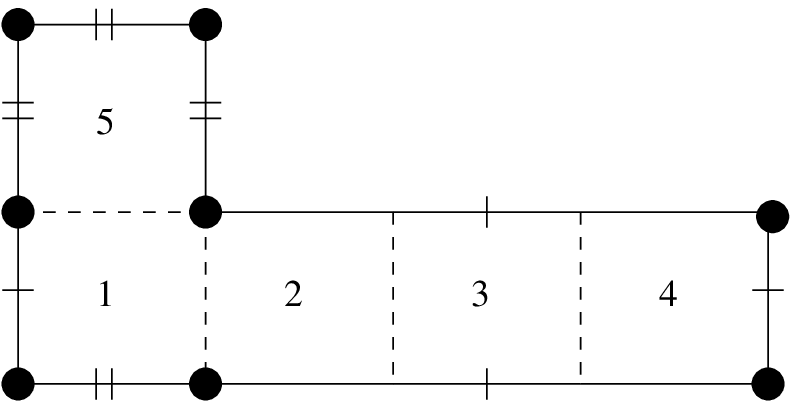}
    \end{figure}
It is an octagon whose edges with the same label are glued in pairs. The eight vertices are glued together as the unique zero. 
It is clear that along the horizontal direction, the permutation is $(1234)(5)$ and along the vertical direction, the permutation is 
 $(15)(2)(3)(4).$
\end{example}

We have seen that there are infinitely many genus $g$ covers of a torus with a unique ramification point in the hyperelliptic locus. 
Veech \cite{V1} showed that a hyperelliptic curve can be obtained by gluing the opposite sides of a centrally symmetric $2n$-gon. 
We want to study which monodromy pairs in $Cov_{d,g}$ admit square-tiled surfaces that are hyperelliptic. 
Note that a genus $g$ curve $C$ is hyperelliptic if and only if $C$ has an involution with $2g+2$ fixed point. 
One would naturally consider covers with order two automorphisms, but the following result suggests this is not possible. 

\begin{proposition}
For a cover $\pi: C\rightarrow E$ parameterized by $\mathcal T_{d,(2g-2)}$, it does not have non-trivial automorphisms of order two. 
\end{proposition}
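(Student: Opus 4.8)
The plan is to exploit the one structural feature that distinguishes the partition $\mu = (2g-2)$: the cover $\pi$ has a \emph{unique} ramification point $p$, at which it is totally ramified of \emph{odd} order $2g-1$. Recall from the definition of cover isomorphism in Section 3 that an automorphism of the cover $\pi: C\to E$ is a biholomorphism $\phi: C\to C$ with $\pi\circ\phi = \pi$ (the triangle commutes with the identity on $E$); under the monodromy dictionary this is a permutation commuting with the pair $(\alpha,\beta)$. Suppose, for contradiction, that such a $\phi$ satisfies $\phi^2 = \mathrm{id}$ and $\phi\neq\mathrm{id}$.

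First I would show $\phi(p)=p$. Since $\pi\circ\phi=\pi$, the map $\phi$ preserves the ramification divisor of $\pi$ and the ramification indices. But $\pi$ is branched only over $q$, where $\pi^{-1}(q) = (2g-1)p + p_{k+1}+\cdots+p_l$ with $p$ the only ramified point (of index $2g-1\geq 3$ for $g\geq 2$); hence $p$ is the unique ramification point of the entire cover, and $\phi$ must fix it. Next I would analyze $\phi$ locally at $p$: in a coordinate $z$ centered at $p$ the cover reads $w=z^{2g-1}$, and $\pi\circ\phi=\pi$ forces $\phi(z)^{2g-1}=z^{2g-1}$ near $0$. Thus $\phi(z)/z$ extends holomorphically and takes values in the finite set of $(2g-1)$-th roots of unity, so it is a constant $\zeta$ with $\zeta^{2g-1}=1$ and $\phi(z)=\zeta z$. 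The condition $\phi^2=\mathrm{id}$ gives $\zeta^2=1$, and since $\gcd(2,2g-1)=1$ the only common solution is $\zeta=1$. Then $\phi$ fixes $p$ with trivial differential, so by the rigidity of automorphisms of a compact Riemann surface we get $\phi=\mathrm{id}$, a contradiction.

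The crucial point, and the only place where the shape of $\mu=(2g-2)$ enters, is the \emph{odd} parity of the single ramification order $2g-1$: this is exactly what forbids a nontrivial square root of the identity inside the local monodromy group of $(2g-1)$-th roots of unity. I expect the main obstacle here to be purely expository rather than substantive, namely making precise that an automorphism of the cover must fix the ramification point and act there by a root of unity.

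As a cross-check one can run the argument combinatorially, which is perhaps more in the spirit of the monodromy setup. A nontrivial automorphism corresponds to a permutation $\sigma\in S_d$ commuting with $\langle\alpha,\beta\rangle$; since this group is transitive, $\sigma$ acts freely on $\{1,\dots,d\}$ (a fixed point of a centralizing element propagates to every letter by transitivity). As $\sigma$ commutes with $\gamma=\AB$, it permutes the cycles of $\gamma$ length by length, hence preserves the unique $(2g-1)$-cycle as a set; but the centralizer of a single $(2g-1)$-cycle on its support is cyclic of odd order $2g-1$, whose only involution is the identity. Thus $\sigma$ would fix that support pointwise, contradicting freeness, so $\sigma=\mathrm{id}$.
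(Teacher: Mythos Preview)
Your proposal is correct, and in fact you give two independent proofs. Your second, combinatorial argument is essentially the paper's proof: the paper translates an order-two automorphism into an involution $\tau\in S_d$ commuting with $(\alpha,\beta)$, observes that $\tau$ must commute with the unique $(2g-1)$-cycle of $\gamma$, hence act on its support as a shift $i\mapsto i+m$, and then notes $2m\equiv 0\pmod{2g-1}$. Your version is actually a bit more careful: you explicitly invoke transitivity of $\langle\alpha,\beta\rangle$ to show a centralizing $\sigma$ is fixed-point-free, which rules out the residual case $m=0$ (i.e.\ $\tau$ fixing the $(2g-1)$-support pointwise) that the paper's terse ``which is impossible'' leaves implicit.

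Your first argument, by contrast, is genuinely different in flavor. Instead of monodromy combinatorics you work directly on the Riemann surface: the unique ramification point is forced to be a fixed point of $\phi$, and the local picture $w=z^{2g-1}$ forces the derivative of $\phi$ at $p$ to be a $(2g-1)$-th root of unity, which combined with $\phi^2=\mathrm{id}$ and $\gcd(2,2g-1)=1$ yields $d\phi_p=1$ and hence $\phi=\mathrm{id}$. This buys you a clean conceptual explanation---the odd local monodromy at the single zero is the whole obstruction---and it would transplant immediately to any stratum $\HH(m_1,\ldots,m_k)$ with some $m_i$ even (so ramification order $m_i+1$ odd) at a point distinguishable from the other zeros, whereas the paper's argument is phrased entirely inside $S_d$ and ties in more directly with the monodromy framework used throughout Section~4.
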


\begin{proof}
Let $(\alpha, \beta)$ be a monodromy pair in $Cov_{d,(2g-2)}$ associated to $\pi$. An order two automorphism of $\pi$ corresponds to a simple transposition $\tau$ 
in $S_d$ such that $\tau (\alpha, \beta) \tau^{-1} = (\alpha, \beta).$ Let $\{1,\ldots, d\}$ be the $d$ letters and assume that 
$\AB = \gamma = (12\cdots (2g-1))(2g)\cdots (d).$ We get $\tau\gamma\tau^{-1} = \gamma$, hence $\tau (12\cdots (2g-1)) \tau^{-1} = (12\cdots (2g-1)).$ 
Then $\tau$ has to send $i$ to $i+m$ (mod $2g-1$) for a fixed $m$ and any $1\leq i \leq 2g-1$. Since $\tau^{2} = id$, we have $i+2m = i$ (mod $2g-1$), which is impossible. 
\end{proof}

Nevertheless, the elliptic curve $E$ has an involution $\iota$. One may hope to find covers that have an involution $\imath$ compatible 
with the elliptic involution:  
$$\xymatrix{
C \ar[r]^{\imath} \ar[d]_{\pi}  &  C\ar[d]^{\pi}  \\
 E\ar[r]^{\iota}           & E}
$$ 
This can be characterized by the monodromy pair $(\alpha, \beta)$ associated to $\pi$ as follows. 

\begin{proposition}
\label{involution}
A cover $\pi: C\rightarrow E$ admits an involution $\imath$ as in the above diagram if and only if 
there exists a simple transposition $\tau\in S_d$ such that $\tau (\alpha, \beta) \tau^{-1} = (\alpha^{-1}, \beta^{-1})$.  
For such $\tau$, let $n$, $n_a$ and $n_b$ denote the numbers of fixed letters in $\{1,\ldots, d  \}$ by $\tau$, $\tau\alpha$ and $\tau\beta$, 
respectively. Let $n_{ba}$ denote the number of letters fixed by both $\tau\beta\alpha$ and $\AB$. 
Then the involution $\imath$ has $n + n_a + n_b + n_{ba} + 1$ fixed points. In particular, $C$ is hyperelliptic 
if $n + n_a + n_b + n_{ba} = 2g+1$. 
\end{proposition}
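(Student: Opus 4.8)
The plan is to realize the involution $\imath$ directly on the square-tiled surface and to reduce everything to counting fixed squares, edge-midpoints and corners in the tiling. First I would establish the equivalence by covering-space theory. The elliptic involution $\iota(z)=-z$ fixes $q$ and acts on $\pi_1(E\setminus q)=\langle a,b\rangle$ by $a\mapsto a^{-1}$, $b\mapsto b^{-1}$; hence the pulled-back cover $\iota^{*}\pi$ has monodromy $(\alpha^{-1},\beta^{-1})$. A lift $\imath$ of $\iota$ to $C$ exists iff these two monodromy representations are conjugate, i.e. iff there is $\tau\in S_d$ with $\tau(\alpha,\beta)\tau^{-1}=(\alpha^{-1},\beta^{-1})$. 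Concretely such a $\tau$ lets me define $\imath$ by sending the $i$-th square to the $\tau(i)$-th square through the rotation $z\mapsto -z$; the gluings $a_i\sim a'_{\alpha(i)}$ and $b_i\sim b'_{\beta(i)}$ are preserved exactly by the conjugation relation, $\imath$ covers $\iota$ and is holomorphic, and $\imath^{2}=\mathrm{id}$ is equivalent to $\tau^{2}=\mathrm{id}$ (this is the sense in which $\tau$ is taken to be a transposition/involution). This yields both directions of the first assertion.

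Next I would count the fixed points of $\imath$ by sorting them according to which of the four $2$-torsion points of $E$ they lie over, namely the center of the unit square, the midpoints of its horizontal and vertical edges, and the corner $q$. Over the center, the center of square $i$ maps to that of square $\tau(i)$, so it is fixed iff $\tau(i)=i$, contributing $n=|\mathrm{Fix}(\tau)|$ points. Over the midpoint of the horizontal edges, tracking the bottom edge of square $i$ through the rotation together with the gluing $b_i\sim b'_{\beta(i)}$ shows that $\imath$ carries this preimage to the one indexed by $\beta\tau(i)$, so the fixed ones are counted by $\mathrm{Fix}(\beta\tau)$; since $\beta\tau$ is conjugate to $\tau\beta$, this number is $n_b$. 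The vertical edges are symmetric and give $n_a=|\mathrm{Fix}(\tau\alpha)|$, where one uses the relation $\tau\alpha\tau^{-1}=\alpha^{-1}$, equivalently $\alpha^{-1}\tau=\tau\alpha$, to rewrite the count.

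The delicate part is the count over the corner $q$, and this is where I expect the main difficulty. The preimages of $q$ correspond to the cycles of the vertex monodromy, which with the appropriate orientation convention for the four squares surrounding a vertex is the commutator $\AB$. Its unique $(2g-1)$-cycle is the zero $p$ of the differential; because $\imath$ acts near $p$ as the rotation by $\pi$ of a cone of angle $2\pi(2g-1)$, the point $p$ is the only fixed point there, accounting for the summand $1$. Each remaining (regular) preimage of $q$ is indexed by a letter $i$ with $(\AB)(i)=i$; indexing it by the square having it as a distinguished corner and pushing this corner through $\imath$ identifies its image with the regular preimage indexed by $\tau\beta\alpha(i)$. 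Hence such a preimage is fixed iff $i$ is fixed by $\tau\beta\alpha$ as well, so the regular fixed points over $q$ are exactly the letters fixed by both $\tau\beta\alpha$ and $\AB$, i.e. $n_{ba}$. Matching these permutations precisely to $\tau\beta\alpha$ and $\AB$ is the step most sensitive to the gluing conventions, and is the one I would check most carefully.

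Finally I would assemble the total $n+n_a+n_b+n_{ba}+1$ and conclude by Riemann--Hurwitz applied to the double cover $C\to C/\imath$: writing $g'$ for the genus of $C/\imath$ and $F$ for the number of fixed points, one has $2g-2=2(2g'-2)+F$, so $F=2g+2$ forces $g'=0$. In that case $C\to C/\imath\cong\Po$ is a degree-two map and $C$ is hyperelliptic; thus $C$ is hyperelliptic whenever $n+n_a+n_b+n_{ba}=2g+1$.
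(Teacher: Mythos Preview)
Your proposal is correct and follows essentially the same approach as the paper: both establish the equivalence via the monodromy of the pulled-back cover $\iota^{*}\pi$, and both count fixed points of $\imath$ on the square-tiled model by sorting them over the four $2$-torsion points of $E$, obtaining $n$, $n_a$, $n_b$, $n_{ba}$, and the $+1$ from the unique zero. Your bookkeeping (noting that $\beta\tau$ and $\tau\beta$ are conjugate via $\tau$, and giving the cone-angle reason why the zero is fixed) is slightly more explicit than the paper's, and your use of Riemann--Hurwitz in the final step is the more standard name for what the paper calls Riemann--Roch, but there is no substantive difference in strategy.
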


\begin{proof}
The composite map $\pi \circ \iota: C\rightarrow E $ has a unique ramification point, so it corresponds to a cover $\pi'$
parameterized by $\mathcal T_{d,(2g-2)}$. Since $\iota$ sends the basis $(a,b)$ of the fundamental group $\pi_1(E, q)$ to $(a^{-1}, b^{-1})$, 
the monodromy pair associated to $\pi'$ is given by $(\alpha^{-1}, \beta^{-1})$. Note that $\pi$ and $\pi'$ are isomorphic if and only if 
there exists $\tau \in S_d$ such that $\tau (\alpha, \beta) \tau^{-1} = (\alpha^{-1}, \beta^{-1})$. Moreover, $\tau^{2} = id$ so $\tau$ is
a simple transposition. 

Take the standard torus $E$ by gluing the parallel edges of $[0,1]^2$. We can realize $C$ as a square-tiled surface by gluing $d$ unit squares with respect to the monodromy pair $(\alpha, \beta)$.  Note that $\iota$ has four $2$-torsion points, the center of the square, the center of each edge and the vertices. The fixed points of $\imath$ on the square-tiled surface $C$ can only occur at these 2-torsion points. 
Mark them on the $i$-th unit square as follows: 
\begin{figure}[H]
    \centering
    \psfrag{Ai}{$A_i$}
    \psfrag{Bi}{$B_i$}
    \psfrag{Ci}{$C_i$}
    \psfrag{Xi}{$X_i$}
    \psfrag{Yi}{$Y_i$}
     \psfrag{Zi}{$Z_i$}
    \psfrag{Wi}{$W_i$}
    \psfrag{Ai'}{$A_i'$}
    \psfrag{Bi'}{$B_i'$}
    \includegraphics[scale=0.5]{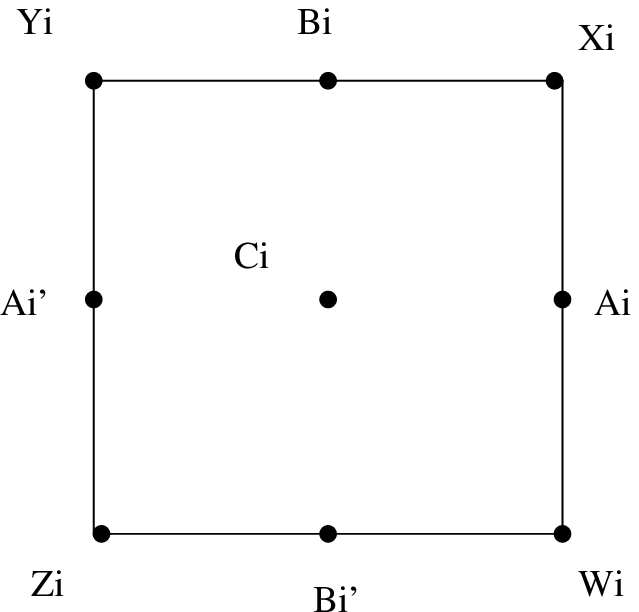}
    \end{figure}

The action $\imath$ sends the $i$-th square to the $\tau(i)$-th by parallel transport and then sends each point to 
its conjugate point symmetric to the center of the square. We have 
$\imath (C_i) = C_{\tau_i}, \imath (A_i) = A_{\tau(i)}', \imath (B_i) = B_{\tau(i)}', \imath (X_i) = Z_{\tau(i)}$ and $\imath(Y_i) = W_{\tau(i)}. $ 
Then $C_i$ is fixed by $\imath$ if and only if $i$ is fixed by $\tau$. 
Recall that $C$ is obtained by identifying the edges $a_i$ with $a_{\alpha(i)}'$ and identifying $b_i$ with $b_{\beta(i)}'$ by parallel transport.
In this process, $A_i$ is glued to $A_{\alpha(i)}'$ and $B_i$ is glued to $B_{\beta(i)}'.$ 
Hence, $A_i$ is fixed by $\imath$ if and only if $\tau(i) = \alpha(i)$, namely, 
$i$ is fixed by $\tau\alpha$ since $\tau$ is a simple transposition. Similarly,  $B_i$ is fixed by $\imath$ if and only if $i$ is fixed by $\tau\beta$. 
The vertices $X_i, Y_{\alpha(i)}, Z_{\beta\alpha(i)}, W_{\alpha^{-1}\beta\alpha(i)}, X_{\AB(i)}, \ldots $ are identified as the same point 
on $C$. If $\AB(i) = i$, we get an unramified integer point over the unique branch point. Since $\imath (X_i) = Z_{\tau(i)}$ and $\imath(Y_{\alpha(i)}) = W_{\tau\alpha(i)}$, 
this point is fixed by $\imath$ if and only if $\tau(i) = \beta\alpha(i)$ or $\alpha^{-1}\beta\alpha(i) = \tau\alpha(i)$. 
Namely, $i$ is fixed by $\tau\beta\alpha$. 
If $\AB(i) \neq i$, we get the unique ramification point on $C$, which is obviously fixed by $\imath$. 

In sum, $\imath$ has $n$ fixed points at the center of the $d$ squares, $n_a$ fixed points at the center of the vertical edges, $n_b$ 
fixed points at the center of horizontal edges and $n_{ba} + 1$ fixed integer points. The total number of fixed points is equal to 
$n + n_a + n_b + n_{ba} + 1$. If this equals $2g+2$, by Riemann-Roch, $C\rightarrow C/\imath$ is a double cover of $\Po$, hence $C$ is hyperelliptic.
\end{proof}

\begin{example}
In Example~\ref{five}, we have the monodromy pair $\alpha = (1234)(5)$ and $\beta = (15)(2)(3)(4)$. 
Take $\tau = (24)(1)(3)(5)$ and one can check that $\tau (\alpha, \beta)\tau^{-1} = (\alpha, \beta)$. 
By Proposition~\ref{involution}, $C$ admits an involution $\imath$ compatible with the elliptic involution. 
The permutation $\tau = (24)(1)(3)(5)$ fixes the three letters $1, 3, 5$, $\tau\alpha = (14)(23)(5)$ fixes
the letter $5$ and $\tau\beta = (15)(24)(3)$ fixes the letter $3$. Note that $\tau\beta\alpha = (145)(23)$ has no fixed letter. 
Overall, we get $n = 3, n_a = 1, n_b = 1, n_{ba} = 0$, hence $\imath$ has six fixed points. This coincides with the fact that $C$ is a genus two hyperelliptic curve
with six Weierstrass points. 

Using the square-tiled surface model 
for $C$, $\imath$ and its fixed points can be seen as follows: 
 \begin{figure}[H]
    \centering
    \psfrag{1}{$1$}
    \psfrag{2}{$2$}
    \psfrag{3}{$3$}
    \psfrag{4}{$4$}
    \psfrag{5}{$5$}
    \includegraphics[scale=0.7]{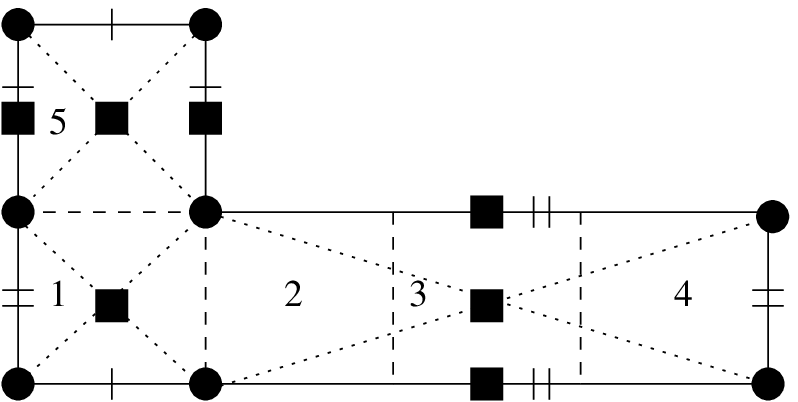}
    \end{figure}
Among the six Weierstrass points, three are at the center of the square, one is on the horizonal edge, one is on the vertical edge, and the last one 
is the unique zero of the 1-form by pulling back $dz$. 
\end{example}

\begin{example}
\label{53}
Consider $d=5$, $g=3$ and the partition $\mu = (4)$. 
Let $\{1,2,3,4,5\}$ be the letters labeled on the five sheets of a cover parameterized by $\mathcal T_{5,(4)}$.  
Consider all possible $\alpha, \beta \in S_5$ such that $\AB$ consists of a single cycle of length 5. 
A routine examination shows that $Cov_{5,(4)}$ parameterizes 40 equivalence classes represented by the following list: \\
(1) $\alpha = (12)(34)(5), \beta = (12345);$ (2) $\alpha = (12)(35)(4),
\beta = (12345);$ \\
(3) $\alpha = (124)(3)(5), \beta = (12345);$ (4) $\alpha = (142)(3)(5), \beta
= (12345);$ \\
(5) $\alpha = (12453), \beta = (12345);$ (6) $\alpha = (13254),
\beta = (12345);$ \\
(7) $\alpha = (14)(25)(3),\beta=(123)(4)(5);$ (8) $\alpha = (12435), \beta
= (123)(4)(5); $ \\
(9) $\alpha = (13425), \beta = (123)(4)(5);$ (10) $\alpha = (15)(23)(4),
\beta = (12)(34)(5);$ \\
(11) $\alpha = (135)(2)(4),\beta = (12)(34)(5);$ (12) $\alpha = (12345),
\beta = (12)(34)(5);$\\
(13) $\alpha = (12354), \beta = (12)(34)(5);$ (14) $\alpha = (1243)(5),
\beta = (12345);$ \\
(15) $\alpha = (1342)(5), \beta = (12345);$ (16) $\alpha = (15)(23)(4),
\beta = (1234)(5);$ \\
(17) $\alpha = (15)(24)(3), \beta = (1234)(5);$ (18) $\alpha = (15)(34)(2),
\beta = (1234)(5);$ \\
(19) $\alpha = (135)(2)(4),\beta = (1234)(5);$ (20) $\alpha = (125)(34),
\beta = (1234)(5);$ \\
(21) $\alpha = (152)(34),\beta = (1234)(5);$ (22) $\alpha = (1325)(4),
\beta = (1234)(5);$\\
(23) $\alpha = (1352)(4),\beta = (1234)(5);$ (24) $\alpha = (1523)(4),\beta
=(1234)(5);$ \\
(25) $\alpha = (1253)(4),\beta = (1234)(5);$ (26) $\alpha = (12435),
\beta = (1234)(5);$ \\
(27) $\alpha = (14235), \beta = (1234)(5);$ (28) $\alpha = (14)(23)(5),
\beta = (123)(45);$ \\
(29) $\alpha = (124)(3)(5), \beta = (123)(45);$ (30) $\alpha = (134)(2)(5),
\beta = (123)(45);$ \\
(31) $\alpha = (145)(23),\beta = (123)(45);$ (32) $\alpha =
(1245)(3), \beta = (123)(45);$ \\
(33) $\alpha = (1345)(2), \beta = (123)(45);$ (34) $\alpha = (1425)(3),
\beta = (123)(4)(5);$ \\
(35) $\alpha = (124)(35), \beta = (123)(4)(5);$ (36) $\alpha =
(142)(35), \beta = (123)(4)(5);$ \\
(37) $\alpha = (143)(25), \beta = (12)(34)(5);$ (38) $\alpha =
(1345)(2), \beta = (12)(34)(5);$ \\
(39) $\alpha = (1354)(2), \beta = (12)(34)(5);$ (40) $\alpha = (1534)(2),
\beta = (12)(34)(5).$ 

By \cite[Theorem 1.18]{C}, there is a group of actions generated by $h_{\alpha}: (\alpha, \beta) \rightarrow (\alpha, \alpha\beta)$ 
and $h_{\beta}: (\alpha, \beta)\rightarrow (\beta\alpha, \beta)$ on $Cov_{d,g}$. Each orbit of the actions corresponds to an irreducible 
component of $\mathcal T_{d,(2g-2)}$. The above 40 pairs fall into 4 orbits, hence $\mathcal T_{5,(4)}$ has 4 irreducible components: \\
(2),(10),(13) belong to the first component $\mathcal T_{5,(4),1}$; \\
(1),(3),(4),(5),(6),(7),(8),(9),(11),(12) belong to the second
component $\mathcal T_{5,(4),2}$; \\
(14),(15),(16),(18),(22),(23),(24),(25),(26),(27),(38),(40) belong
to the third component $\mathcal T_{5,(4),3}$; \\
(17),(19),(20),(21),(28),(29),(30),(31),(32),(33),(34),(35),(36),(37),(39) belong to the last component
$\mathcal T_{5,(4),4}$. 

Using the slope formula \cite[Theorem 1.15]{C}, we obtain the slope of each component as follows: 
$$ s(\mathcal T_{5,(4),1}) = s(\mathcal T_{5,(4),4}) = 9\frac{1}{3} $$
and
$$ s(\mathcal T_{5,(4),2}) = s(\mathcal T_{5,(4),3}) = 9.$$ 

Since the hyperelliptic divisor $\overline{H}_3$ on $\overline{\mathcal M}_3$ has slope $9$, it has negative intersection with 
$\mathcal T_{5,(4),1}, \mathcal T_{5,(4),4}$, and zero intersection with $\mathcal T_{5,(4),2}, \mathcal T_{5,(4),3}.$ It implies that covers parameterized by
$\mathcal T_{5,(4),1}$ and $\mathcal T_{5,(4),4}$ are hyperelliptic. They provide two irreducible rigid curves on $\widetilde{M}_{0,8}$. 
Note that they have slope $9\frac{1}{3}$ as predicted in Corollary~\ref{slope}. 
The other two components $\mathcal T_{5,(4),2}$ and $\mathcal T_{5,(4),3}$ do not intersect $\overline{H}_3$. They map into the divisor on 
$\overline{\mathcal M}_3$ whose general points parameterize plane quartics with a hyperflex line. 

We can also analyze the components from the viewpoint of square-tiled surfaces. Take a cover $\pi: C\rightarrow E$ corresponding to 
the case (13) $\alpha = (12354), \beta = (12)(34)(5)$
in $\mathcal T_{5,(4),1}.$ We can choose a simple transposition $\tau = (12)(34)(5)$ such that $\tau (\alpha, \beta) \tau^{-1} = (\alpha^{-1}, \beta^{-1}). $
By Proposition~\ref{involution}, we have $n= 1, n_a = 1, n_b = 5, n_{ba} = 0$ and $n + n_a + n_b + n_{ba} = 7$, so $C$ is hyperelliptic. 
The hyperelliptic involution of $C$ and its eight Weierstrass points can be seen as follows: 
 \begin{figure}[H]
    \centering
    \psfrag{a}{$a$}
    \psfrag{b}{$b$}
    \psfrag{c}{$c$}
    \psfrag{d}{$d$}
    \psfrag{e}{$e$}
    \psfrag{f}{$f$}
     \psfrag{1}{$1$}
    \psfrag{2}{$2$}
    \psfrag{3}{$3$}
    \psfrag{4}{$4$}
    \psfrag{5}{$5$}
    \includegraphics[scale=0.6]{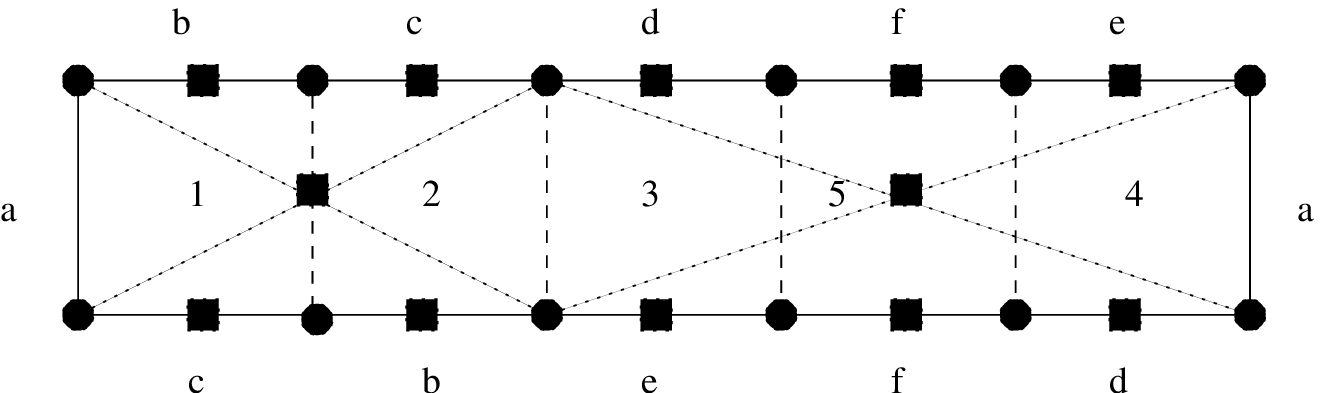}
    \end{figure}
    
Similarly, for the case (39) $\alpha = (1354)(2), \beta = (12)(34)(5)$ in $\mathcal T_{5,(4),4}$, the square-tiled surface has a hyperelliptic involution 
and its eight Weierstrass points are as follows: 
 \begin{figure}[H]
    \centering
    \psfrag{a}{$a$}
    \psfrag{b}{$b$}
    \psfrag{c}{$c$}
    \psfrag{d}{$d$}
    \psfrag{e}{$e$}
    \psfrag{f}{$f$}
     \psfrag{1}{$1$}
    \psfrag{2}{$2$}
    \psfrag{3}{$3$}
    \psfrag{4}{$4$}
    \psfrag{5}{$5$}
    \includegraphics[scale=0.7]{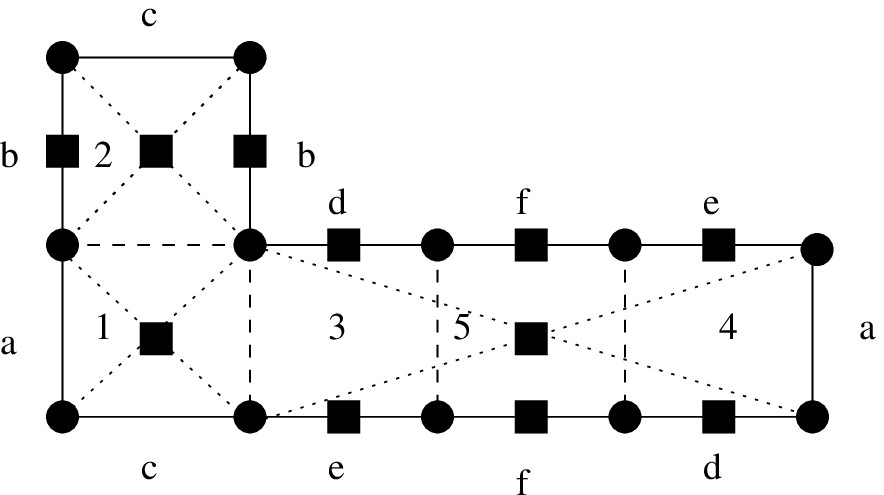}
    \end{figure}

Consider the case (8) $\alpha = (12435), \beta= (123)(4)(5)$ in $\mathcal T_{5,(4),2}$. If a simple transposition $\tau$ 
satisfies $\tau (\alpha,\beta) \tau^{-1} = (\alpha,\beta)$, one can check that $\tau$ has to be $(12)(45)(3)$.  
Note that $n = 1, n_a =  1, n_b = 1, n_{ba} = 0$. By Proposition~\ref{involution}, the involution induced by $\tau$ is not hyperelliptic,
but a double cover of an elliptic curve. So a covering curve in $\mathcal T_{5,(4),2}$ is not only a degree five cover of an elliptic curve with a unique ramification point, 
but also a double cover of another elliptic curve. See the following square-tiled surface: 
\begin{figure}[H]
    \centering
    \psfrag{a}{$a$}
    \psfrag{b}{$b$}
    \psfrag{c}{$c$}
    \psfrag{d}{$d$}
    \psfrag{e}{$e$}
    \psfrag{f}{$f$}
     \psfrag{1}{$1$}
    \psfrag{2}{$2$}
    \psfrag{3}{$3$}
    \psfrag{4}{$4$}
    \psfrag{5}{$5$}
    \includegraphics[scale=0.6]{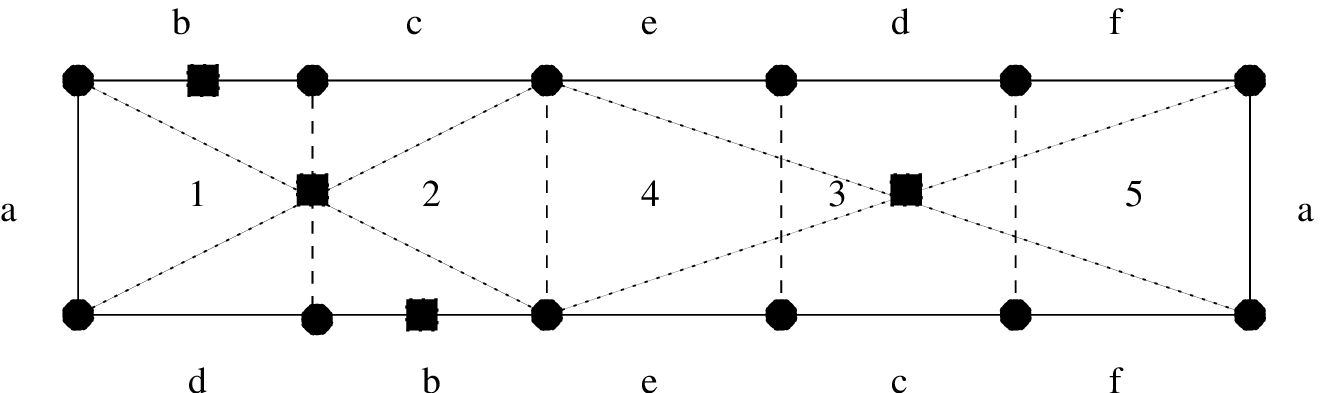}
    \end{figure}

For $(\alpha, \beta)$ in $\mathcal T_{5,(4),3}$, one can check that there does not exist such $\tau$ as in Proposition~\ref{involution}. So 
those covers do not admit an involution compatible with the elliptic involution. Correspondingly in their square-tiled surface models, there is no 
symmetry of order two. 
\end{example}

\section{Appendix A: Siegel-Veech area constants}
In this paper, we denote by $c_{\mu} $ the Siegel-Veech area constant of the stratum $\HH(\mu)$, which equals $\frac{\pi^2}{3}c_{area}$ in the context of \cite{EKZ}.  
In the proof of Theorem~\ref{slc}, we use an equality
\begin{equation}
\label{1}
c_{\mu} = \lim\limits_{d\to \infty} \frac{M_{d,\mu}}{N_{d,\mu}}. 
\end{equation}
Here we include a proof of (1), which was explained to the author by Eskin. The reader can refer to \cite{EKZ} and \cite{EMZ} for broader discussions on Siegel-Veech constants. 

We first fix some notation. Let $S_0 = (C, \omega)$ be a square-tiled surface that comes from a genus $g$ standard torus covering $\pi: C\rightarrow E$ with a unique branch point and the ramification class $\mu$, where $\omega = \pi^{-1}(dz)$ and $\mu$ is a partition of $2g-2$. As a point in the stratum $\HH(\mu)$ of Abelian differentials, $S_0$ has integer coordinates under the period map. The SL$(2,\RR)$ action on the real and imaginary parts of $\omega$ induces an action on $\HH(\mu)$. Let $\HH_1(\mu)$ be the subset of area one surfaces, which is 
SL$(2,\RR)$ invariant in $\HH(\mu)$. Let $\Gamma(S_0)$ be the group of stabilizers of the SL$(2,\RR)$ action on $S_0$, which is called the Veech group. Use $\OO(S_0)$ to denote the SL$(2,\ZZ)$ orbit of $S_0$. For simplicity, let $G =$ SL$(2,\RR)$, $\Gamma = $ SL$(2,\ZZ)$ and $\eta$ be the normalized Haar measure on $G$ such that $\eta(G/\Gamma) = 1$. 

Let $\phi: \HH_1(\mu) \rightarrow \RR$ be any $L^1$ function. By expressing the fundamental domain of $G/\Gamma(S_0)$ as a union 
of the fundamental domains of $G/\Gamma$, we have the following standard equality  
\begin{equation}
\label{2}
\int_{G/\Gamma(S_0)} \phi(gS_0)d\eta(g) = \sum\limits_{S\in \OO(S_0)} \int_{G/\Gamma}\phi(gS)d\eta(g). 
\end{equation}

Since $\omega$ induces a flat structure on $S$, we can talk about geodesics and cylinders on $S$ with a fixed direction. 
Let $f: \RR^2 \rightarrow \RR$ be a bounded function of compact support and define $\hat{f}: \HH_1(\mu)\rightarrow \RR$  
by $$ \hat{f}(S) = \sum\limits_{C\in Cyl(S)} f(\vec{C}) Area(C), $$
where $Cyl(S)$ is the set of cylinders on the flat surface $S$, $\vec{C}\in \RR^2$ is the associated vector of $C\in Cyl(S)$ and 
$Area(C)$ is the area of the cylinder. 

Let $\nu$ be the Lebesgue measure on $\HH_1(\mu)$, cf. \cite[Definition 1.3]{EO}. We know that $\HH_1(\mu)$ has finite volume under $\nu$. By the Siegel-Veech formula \cite{V2}, the Siegel-Veech constants $c_{area}(S_0)$ 
and $c_{\mu}$ satisfy the following property that for the above $f$, 
\begin{equation}
\label{3}
\frac{1}{\eta(G/\Gamma(S_0))}\int_{G/\Gamma(S_0)} \hat{f}(gS)d\eta(g) = c_{area}(S_0)\int_{\RR^2} f, 
\end{equation}
and
\begin{equation}
\label{4}
\frac{1}{\nu(\HH_1(\mu))}\int_{\HH_1(\mu)} \hat{f}(S)d\nu(S) = c_{\mu}\int_{\RR^2} f.
\end{equation}

By \cite{EKZ}, we know 
\begin{equation}
\label{5}
c_{area}(S_0) = \frac{1}{|\OO(S_0)|}\sum\limits_{S\in \OO(S_0)} w(S), 
\end{equation}
where $w(S)$ is the sum of height($C$)/length($C$) over all horizontal cylinders $C$ on $S$. 

Recall the covering set of equivalence classes $Cov_{d,\mu}$ introduced in section 3. It parameterizes degree $d$ connected covers of a fixed torus with a unique branch point of ramification type $\mu$. We also defined two summations 
$$N_{d,\mu} = |Cov_{d,\mu}|$$ 
and 
$$M_{d,\mu} = \sum\limits_{S\in Cov_{d,\mu}} w(S). $$

The group $\Gamma = $ SL$(2, \ZZ)$ acts on $Cov_{d,\mu}$, which is the same as the monodromy action \cite[Theorem 1.18]{C}. Each of its orbits corresponds to an irreducible component $\Ti$ of the Hurwitz space $\T$. Let $\Delta$ be a subset of $Cov_{d,\mu}$ consisting of a single element from each orbit. Note that in our setting, we have $\eta(G/\Gamma(S_0)) = |\OO(S_0)|$. 
Now by (2), (3) and (5), we have 
\begin{align}
\label{6}
\sum\limits_{S\in Cov_{d,\mu}}\int_{G/\Gamma} \hat{f}(gS)d\eta(g) & =  \sum\limits_{S_0\in \Delta}\sum\limits_{S\in\OO(S_0)}\int_{G/\Gamma}\hat{f}(gS)d\eta(g) \\
& =  \sum\limits_{S_0\in\Delta}\int_{G/\Gamma(S_0)} \hat{f}(gS)d\eta(g) \nonumber\\
& =  \sum\limits_{S_0\in \Delta} |\OO(S_0)|c_{area}(S_0) \int_{\RR^2}f \nonumber\\
& =  \sum\limits_{S_0\in \Delta}\sum\limits_{S\in\OO(S_0)} w(S)\int_{\RR^2}f \nonumber\\
& =  M_{d,\mu} \int_{\RR^2}f.\nonumber
\end{align}

By the argument in \cite{EO}, for such a function $\hat{f}$ on $\HH_1(\mu)$ and any given $g\in G$, we have 
\begin{equation} 
\label{7}
\lim\limits_{d\to \infty}\frac{1}{N_{d,\mu}} \sum\limits_{S\in Cov_{d,\mu}} \hat{f}(gS) = \frac{1}{\nu(\HH_1(\mu))}\int_{\HH_1(\mu)}\hat{f}(S)d\nu(S).
\end{equation}

By (6), (7) and $\eta(G/\Gamma) = 1$, we have 
\begin{align}
\label{8}
\lim\limits_{d\to\infty}\frac{M_{d,\mu}}{N_{d,\mu}}\int_{\RR^2}f & =  \lim\limits_{d\to\infty} \frac{1}{N_{d,\mu}}\sum\limits_{S\in Cov_{d,\mu}}\int_{G/\Gamma}\hat{f}(gS) d\eta(g) \\
& = \int_{G/\Gamma}\Big(\lim\limits_{d\to\infty} \frac{1}{N_{d,\mu}} \sum\limits_{S\in Cov_{d,\mu}} \hat{f}(gS)\Big) d\eta(g)\nonumber \\
& =  \frac{1}{\nu(\HH_1(\mu))}\int_{G/\Gamma}d\eta(g)\int_{\HH_1(\mu)}\hat{f}d\nu(S). \nonumber \\
& =  \frac{1}{\nu(\HH_1(\mu))}\int_{\HH_1(\mu)}\hat{f}d\nu(S). \nonumber
\end{align}

The interchange of the limit and integral in (8) is valid due to the dominated convergence theorem in \cite{EM}. Now comparing (4) with (8), we obtain that 
$$ \lim\limits_{d\to\infty}\frac{M_{d,\mu}}{N_{d,\mu}} = c_{\mu}. $$

\section{Appendix B: Limits of slopes for $3\leq g\leq 6$}
Zorich wrote a program to calculate the Lyapunov exponents and Siegel-Veech constants for small genus. The corresponding table can be found in the appendix of \cite{EKZ}. 
Combining with Theorem~\ref{slc}, we can list the limit of slopes 
$$s_{\mu} = \lim\limits_{d\to\infty}s(\T)$$ 
in each stratum for small genus. In the following table, the first column consists of strata $\HH (\mu)$ or their connected components. The values of $s_{\mu}$ and their approximations are presented in the middle and the last columns, respectively. 

For $g=3$, we have
$$\begin{array}{lll}
\mathcal H^{hyp}(4) &
s_{\mu} =                 28/3 & 
s_{\mu}\approx            9.33333
 \\ 
\mathcal H^{odd}(4) &
s_{\mu} =                 9 & 
s_{\mu}\approx            9.
 \\ 
\mathcal H^{hyp}(2,2) &
s_{\mu} =                 28/3 & 
s_{\mu}\approx            9.33333
 \\ 
\mathcal H^{odd}(2,2) &
s_{\mu} =                 44/5 & 
s_{\mu}\approx            8.8
 \\ 
\mathcal H(3, 1) &
s_{\mu} =                 9 & 
s_{\mu}\approx            9.
 \\ 
\mathcal H(2, 1, 1) &
s_{\mu} =                 98/11 & 
s_{\mu}\approx            8.90909
 \\ 
\mathcal H(1, 1, 1, 1) &
s_{\mu} =                 468/53 & 
s_{\mu}\approx            8.83019
 \\ 
\end{array}
$$

For $g=4$, we have
$$\begin{array}{lll}
\mathcal H^{hyp}(6) &
s_{\mu} =                 9 &
s_{\mu}\approx            9.
 \\
\mathcal H^{even}(6) &
s_{\mu} =                 60/7 &
s_{\mu}\approx            8.57143
 \\
\mathcal H^{odd}(6) &
s_{\mu} =                 108/13 &
s_{\mu}\approx            8.30769
 \\
\mathcal H^{hyp}(3,3) &
s_{\mu} =                 9 &
s_{\mu}\approx            9.
 \\
\mathcal H^{nonhyp}(3,3) &
s_{\mu} =                 33/4 &
s_{\mu}\approx            8.25
 \\
\mathcal H^{even}(4, 2) &
s_{\mu} =                 17/2 &
s_{\mu}\approx            8.5
 \\
\mathcal H^{odd}(4, 2) &
s_{\mu} =                 236/29 &
s_{\mu}\approx            8.13793
 \\
\mathcal H^{even}(2, 2, 2) &
s_{\mu} =                 6180/737 &
s_{\mu}\approx            8.38535
 \\
\mathcal H^{odd}(2, 2, 2) &
s_{\mu} =                 8 &
s_{\mu}\approx            8.
 \\
\mathcal H(5, 1) &
s_{\mu} =                 25/3 &
s_{\mu}\approx            8.33333
 \\
\mathcal H(4, 1, 1) &
s_{\mu} =                 3118/379 &
s_{\mu}\approx            8.22691
 \\
\mathcal H(3, 2, 1) &
s_{\mu} =                 41/5 &
s_{\mu}\approx            8.2
 \\
\mathcal H(3, 1, 1, 1) &
s_{\mu} =                 65/8 &
s_{\mu}\approx            8.125
 \\
\mathcal H(2, 2, 1, 1) &
s_{\mu} =                 8178/1009 &
s_{\mu}\approx            8.10505
 \\
\mathcal H(2, 1, 1, 1, 1) &
s_{\mu} =                 1052/131 &
s_{\mu}\approx            8.03053
 \\
\mathcal H(1, 1, 1, 1, 1, 1) &
s_{\mu} =                 6675/839 &
s_{\mu}\approx            7.9559
 \\
\end{array}
$$

For $g=5$, we have 
$$\begin{array}{lll}
\mathcal H^{hyp}(8) &
s_{\mu} =                 44/5 & 
s_{\mu}\approx            8.8
 \\ 
\mathcal H^{even}(8) &
s_{\mu} =                 152064/18959 & 
s_{\mu}\approx            8.02068
 \\ 
\mathcal H^{odd}(8) & 
s_{\mu} =                 20004864/2574317 & 
s_{\mu}\approx            7.77094
 \\ 
\mathcal H^{hyp}(4,4) &
s_{\mu} =                 44/5 & 
s_{\mu}\approx            8.8
 \\ 
\mathcal H^{even}(4,4) &
s_{\mu} =                 70686/8977 & 
s_{\mu}\approx            7.87412
 \\ 
\mathcal H^{odd}(4,4) &
s_{\mu} =                 670629036/88307837 & 
s_{\mu}\approx            7.59422
 \\ 
\mathcal H^{even}(6, 2) &
s_{\mu} =                 1402948/178429 & 
s_{\mu}\approx            7.86278
 \\ 
\mathcal H^{odd}(6, 2) & 
s_{\mu} =                 176/23 & 
s_{\mu}\approx            7.65217
 \\ 
\mathcal H^{even}(4, 2, 2) & 
s_{\mu} =                 811372/104943 & 
s_{\mu}\approx            7.73155
 \\ 
\mathcal H^{odd}(4, 2, 2) &
s_{\mu} =                 2022416/269051 & 
s_{\mu}\approx            7.51685
 \\ 
\mathcal H^{even}(2, 2, 2, 2) &
s_{\mu} =                 998/131 & 
s_{\mu}\approx            7.61832
 \\ 
\mathcal H^{odd}(2, 2, 2, 2) &
s_{\mu} =                 2630076/355309 & 
s_{\mu}\approx            7.40222
 \\ 
\mathcal H(7, 1) &
s_{\mu} =                 18876/2423 & 
s_{\mu}\approx            7.79034
 \\ 
\mathcal H(6, 1, 1) &
s_{\mu} =                 456084222/59332837 & 
s_{\mu}\approx            7.68688
 \\ 
\mathcal H(5, 3) &
s_{\mu} =                 209/27 & 
s_{\mu}\approx            7.74074
 \\ 
\mathcal H(5, 2, 1) &
s_{\mu} =                 34386/4493 & 
s_{\mu}\approx            7.65324
 \\ 
\mathcal H(5, 1, 1, 1) & 
s_{\mu} =                 2344/309 & 
s_{\mu}\approx            7.58576
 \\ 
\mathcal H(4, 3, 1) &
s_{\mu} =                 3350523/438419 & 
s_{\mu}\approx            7.64229
 \\ 
\mathcal H(4, 2, 1, 1) &
s_{\mu} =                 4880938/646039 & 
s_{\mu}\approx            7.55518
 \\ 
\mathcal H(4, 1, 1, 1, 1) &
s_{\mu} =                 4797996/640763 & 
s_{\mu}\approx            7.48794
 \\ 
\mathcal H(3, 3, 2) &
s_{\mu} =                 466796/61307 & 
s_{\mu}\approx            7.61407
 \\ 
\mathcal H(3, 3, 1, 1) &
s_{\mu} =                 358044/47435 & 
s_{\mu}\approx            7.5481
 \\ 
\mathcal H(3, 2, 2, 1) &
s_{\mu} =                 45537/6049 & 
s_{\mu}\approx            7.52802
 \\ 
\mathcal H(3, 2, 1, 1, 1) &
s_{\mu} =                 20893/2800 & 
s_{\mu}\approx            7.46179
 \\ 
\mathcal H(3, 1, 1, 1, 1, 1) &
s_{\mu} =                 15537/2101 & 
s_{\mu}\approx            7.39505
 \\ 
\mathcal H(2, 2, 2, 1, 1) &
s_{\mu} =                 54114/7271 & 
s_{\mu}\approx            7.44244
 \\ 
\mathcal H(2, 2, 1, 1, 1, 1) &
s_{\mu} =                 1967748/266761 & 
s_{\mu}\approx            7.37645
 \\ 
\mathcal H(2, 1, 1, 1, 1, 1, 1) &
s_{\mu} =                 37451/5123 & 
s_{\mu}\approx            7.31037
 \\ 
\mathcal H(1, 1, 1, 1, 1, 1, 1, 1) &
s_{\mu} =                 569332/78587 & 
s_{\mu}\approx            7.24461
 \\ 
\end{array}
$$

For $g=6$, we have 
$$\begin{array}{lll}
\mathcal H^{hyp}(10) &
s_{\mu} =                 26/3 & 
s_{\mu}\approx            8.66667
 \\ 
\mathcal H^{even}(10) &
s_{\mu} =                 33950878311168/4542876976559 & 
s_{\mu}\approx            7.47343
 \\ 
\mathcal H^{odd}(10) & 
s_{\mu} =                 60850323456/8268054007 & 
s_{\mu}\approx            7.35969
 \\ 
\mathcal H^{hyp}(5,5) &
s_{\mu} =                 26/3 & 
s_{\mu}\approx            8.66667
 \\ 
\mathcal H^{nonhyp}(5,5) &
s_{\mu} =                 16222/2235 & 
s_{\mu}\approx            7.25817
 \\ 
\mathcal H^{even}(6, 4) &
s_{\mu} =                 1890839628/258411895 & 
s_{\mu}\approx            7.31715
 \\ 
\mathcal H^{odd}(6, 4) &
s_{\mu} =                 447260190/62068027 & 
s_{\mu}\approx            7.20597
 \\ 
\mathcal H^{even}(8, 2) &
s_{\mu} =                 5869052/799675 & 
s_{\mu}\approx            7.3393
 \\ 
\mathcal H^{odd}(8, 2) &
s_{\mu} =                 148352130/20523571 & 
s_{\mu}\approx            7.22838
 \\ 
\mathcal H^{even}(4, 4, 2) &
s_{\mu} =                 788452/109573 & 
s_{\mu}\approx            7.19568
 \\ 
\mathcal H^{odd}(4, 4, 2) &
s_{\mu} =                 1918814066/270844505 & 
s_{\mu}\approx            7.08456
 \\ 
\mathcal H^{even}(6, 2, 2) &
s_{\mu} =                 520529612/72176945 & 
s_{\mu}\approx            7.21185
 \\ 
\mathcal H^{odd}(6, 2, 2) &
s_{\mu} =                 4277366860/602374257 & 
s_{\mu}\approx            7.10085
 \\ 
\mathcal H^{even}(4, 2, 2, 2) &
s_{\mu} =                 7519897812/1060244863 & 
s_{\mu}\approx            7.0926
 \\ 
\mathcal H^{odd}(4, 2, 2, 2) &
s_{\mu} =                 15064684/2157689 & 
s_{\mu}\approx            6.98186
 \\ 
\mathcal H^{even}(2, 2, 2, 2, 2) &
s_{\mu} =                 758428/108479 & 
s_{\mu}\approx            6.99147
 \\ 
\mathcal H^{odd}(2, 2, 2, 2, 2) &
s_{\mu} =                 15297229/2223051 & 
s_{\mu}\approx            6.88119
 \\ 
\mathcal H(9, 1) &
s_{\mu} =                 37708398/5152405 & 
s_{\mu}\approx            7.3186
 \\ 
\mathcal H(8, 1, 1) &
s_{\mu} =                 1863320550/257996231 & 
s_{\mu}\approx            7.22228
 \\ 
\mathcal H(7, 3) &
s_{\mu} =                 462292740/63606061 & 
s_{\mu}\approx            7.26806
 \\ 
\mathcal H(7, 2, 1) &
s_{\mu} =                 38090780/5298767 & 
s_{\mu}\approx            7.18861
 \\ 
\mathcal H(7, 1, 1, 1) & 
s_{\mu} =                 1209740/169723 & 
s_{\mu}\approx            7.12773
 \\ 
\mathcal H(6, 3, 1) &
s_{\mu} =                 3587177295/499975363 & 
s_{\mu}\approx            7.17471
 \\ 
\mathcal H(6, 2, 1, 1) &
s_{\mu} =                 3706261910/522310211 & 
s_{\mu}\approx            7.0959
 \\ 
\mathcal H(6, 1, 1, 1, 1) &
s_{\mu} =                 4604674588/654501283 & 
s_{\mu}\approx            7.03539
 \\ 
\mathcal H(5, 4, 1) &
s_{\mu} =                 10603/1479 & 
s_{\mu}\approx            7.16903
 \\ 
\mathcal H(5, 3, 2) &
s_{\mu} =                 77025/10783 & 
s_{\mu}\approx            7.14319
 \\ 
\mathcal H(5, 3, 1, 1) &
s_{\mu} =                 119875/16923 & 
s_{\mu}\approx            7.08355
 \\ 
\mathcal H(5, 2, 2, 1) &
s_{\mu} =                 6824055/965852 & 
s_{\mu}\approx            7.06532
 \\ 
\mathcal H(5, 2, 1, 1, 1) &
s_{\mu} =                 20764920/2964019 & 
s_{\mu}\approx            7.00566
 \\ 
\mathcal H(5, 1, 1, 1, 1, 1) &
s_{\mu} =                 105124/15135 & 
s_{\mu}\approx            6.94575
 \\ 
\mathcal H(4, 4, 1, 1) &
s_{\mu} =                 969884818/136989269 & 
s_{\mu}\approx            7.08001
 \\ 
\mathcal H(4, 3, 3) &
s_{\mu} =                 1175124132/164753321 & 
s_{\mu}\approx            7.13263
 \\ 
\mathcal H(4, 3, 2, 1) &
s_{\mu} =                 37937237/5377361 & 
s_{\mu}\approx            7.05499
 \\ 
\mathcal H(4, 3, 1, 1, 1) &
s_{\mu} =                 101001003/14437984 & 
s_{\mu}\approx            6.99551
 \\ 
\mathcal H(4, 2, 2, 1, 1) &
s_{\mu} =                 5749826838/824007139 & 
s_{\mu}\approx            6.97788
 \\ 
\mathcal H(4, 2, 1, 1, 1, 1) &
s_{\mu} =                 6549795692/946682951 & 
s_{\mu}\approx            6.91868
 \\ 
\mathcal H(4, 1, 1, 1, 1, 1, 1) &
s_{\mu} =                 425743929/62066287 & 
s_{\mu}\approx            6.8595
 \\ 
\mathcal H(3, 3, 3, 1) &
s_{\mu} =                 858690/121831 & 
s_{\mu}\approx            7.04821
 \\ 
\mathcal H(3, 3, 2, 2) &
s_{\mu} =                 79129020/11255951 & 
s_{\mu}\approx            7.02997
 \\ 
\mathcal H(3, 3, 2, 1, 1) &
s_{\mu} =                 70230580/10074361 & 
s_{\mu}\approx            6.97122
 \\ 
\mathcal H(3, 3, 1, 1, 1, 1) &
s_{\mu} =                 479973180/69439639 & 
s_{\mu}\approx            6.91209
 \\ 
\mathcal H(3, 2, 2, 2, 1) &
s_{\mu} =                 53539095/7699507 & 
s_{\mu}\approx            6.95357
 \\ 
\mathcal H(3, 2, 2, 1, 1, 1) &
s_{\mu} =                 60991725/8845784 & 
s_{\mu}\approx            6.89501
 \\ 
\mathcal H(3, 2, 1, 1, 1, 1, 1) &
s_{\mu} =                 9250555/1353139 & 
s_{\mu}\approx            6.83637
 \\ 
\mathcal H(3, 1, 1, 1, 1, 1, 1, 1) &
s_{\mu} =                 178679025/26361772 & 
s_{\mu}\approx            6.77796
 \\ 
\mathcal H(2, 2, 2, 2, 1, 1) &
s_{\mu} =                 1157266990/168259347 & 
s_{\mu}\approx            6.87788
 \\ 
\mathcal H(2, 2, 2, 1, 1, 1, 1) &
s_{\mu} =                 43884132/6434891 & 
s_{\mu}\approx            6.81972
 \\ 
\mathcal H(2, 2, 1, 1, 1, 1, 1, 1) & 
s_{\mu} =                 1130218035/167149999 & 
s_{\mu}\approx            6.7617
 \\ 
\mathcal H(2, 1, 1, 1, 1, 1, 1, 1, 1) &
s_{\mu} =                 85529164/12757861 & 
s_{\mu}\approx            6.70404
 \\ 
\mathcal H(1, 1, 1, 1, 1, 1, 1, 1, 1, 1) &
s_{\mu} =                 122875578/18486283 & 
s_{\mu}\approx            6.64685
 \\ 
\end{array}
$$

\end{document}